\documentclass{amsart}

\title{Intrinsically spherical 3-linked graphs}
\author[M. Burkhart]{Madeleine Burkhart}
\author[A. Castillo]{Andrew Castillo}
\author [J. Doane]{Jonathan Doane}
\author[J. Foisy]{Joel Foisy}
\author[C. Negron]{Cristopher Negron}

\address[Madeleine Burkhart]{c/o SUNY Potsdam Mathematics Department \\ 44 Pierrepont Avenue\\Potsdam, NY 13676 \\}
\address[Andrew Castillo]{c/o SUNY Potsdam Mathematics Department \\ 44 Pierrepont Avenue\\Potsdam, NY 13676\\azcastillo316@gmail.com}
\address[Jonathan Doane]{Binghamton University Department of Mathematical Sciences\\Binghamton University \\ PO Box 6000\\Binghamton, NY 13902-6000\\ doane@math.binghamton.edu}
\address[Joel Foisy]{SUNY Potsdam Mathematics Department \\MacVicar Hall 223, 44 Pierrepont Avenue \\Potsdam, NY 13676 \\foisyjs@potsdam.edu}
\address[Cristopher Negron]{c/o SUNY Potsdam Mathematics Department \\ 44 Pierrepont Avenue\\Potsdam, NY 13676\\cristopher.negron@nn.k12.va.us}

\usepackage{amsfonts}
\usepackage{amssymb}
\usepackage{graphicx}
\usepackage{float}
\usepackage{latexsym}
\usepackage{amsthm}
\usepackage{amscd}
\usepackage{enumerate}
\usepackage{epsfig}
\usepackage{yfonts}
\usepackage{relsize}
\usepackage{appendix}
\usepackage{multicol}


\newtheorem{theorem}{Theorem}[section]

\newtheorem{proposition}[theorem]{Proposition}
\newtheorem{corollary}[theorem]{Corollary}
\newtheorem{conjecture}[theorem]{Conjecture}

\theoremstyle{definition}

\newcommand{\N}{{\mathbb N}}

\newcommand{\disunion}{\dot{\bigcup}}

\newcommand{\dU}{\dot{\bigcup}}

\newcommand{\Sph}{$S^2$}

\newcommand{\iplI}{intrinsically type I spherical 3-linked}
\newcommand{\mmI}{minor-minimal with respect to being intrinsically type I spherical 3-linked}
\newcommand{\iplII}{intrinsically type II planar 3-linked}
\newcommand{\mmII}{minor-minimal with respect to being intrinsically type II spherical 3-linked}

\newcommand{\Don}{$\mathfrak{D}_1$}
\newcommand{\Dtw}{$\mathfrak{D}_2$}
\newcommand{\Dth}{$\mathfrak{D}_3$}
\newcommand{\Dfo}{$\mathfrak{D}_4$}
\newcommand{\Dfi}{$\mathfrak{D}_5$}
\newcommand{\Dsi}{$\mathfrak{D}_6$}
\newcommand{\Dsea}{$\mathfrak{D}_{7_{\alpha}}$}
\newcommand{\Deia}{$\mathfrak{D}_{8_{\alpha}}$}
\newcommand{\Dnia}{$\mathfrak{D}_{9_{\alpha}}$}
\newcommand{\Dtena}{$\mathfrak{D}_{10_{\alpha}}$}
\newcommand{\Delea}{$\mathfrak{D}_{11_{\alpha}}$}
\newcommand{\Dtwe}{$\mathfrak{D}_{12}$}
\newcommand{\Dthi}{$\mathfrak{D}_{13}$}
\newcommand{\Dons}{$\mathfrak{D}_1$ ~}
\newcommand{\Dtws}{$\mathfrak{D}_2$ ~}
\newcommand{\Dths}{$\mathfrak{D}_3$ ~}
\newcommand{\Dfos}{$\mathfrak{D}_4$ ~}
\newcommand{\Dfis}{$\mathfrak{D}_5$ ~}
\newcommand{\Dsis}{$\mathfrak{D}_6$ ~}
\newcommand{\Dseas}{$\mathfrak{D}_{7_{\alpha}}$\space}
\newcommand{\Deias}{$\mathfrak{D}_{8_{\alpha}}$\space}
\newcommand{\Dnias}{$\mathfrak{D}_{9_{\alpha}}$\space}
\newcommand{\Dtenas}{$\mathfrak{D}_{10_{\alpha}}$\space}
\newcommand{\Deleas}{$\mathfrak{D}_{11_{\alpha}}$\space}
\newcommand{\Dtwes}{$\mathfrak{D}_{12}$ ~}
\newcommand{\Dthis}{$\mathfrak{D}_{13}$ ~}
\newcommand{\Dseb}{$\mathfrak{D}_{7_{\beta}}$}
\newcommand{\Deib}{$\mathfrak{D}_{8_{\beta}}$}
\newcommand{\Dnib}{$\mathfrak{D}_{9_{\beta}}$}
\newcommand{\Dtenb}{$\mathfrak{D}_{10_{\beta}}$}
\newcommand{\Deleb}{$\mathfrak{D}_{11_{\beta}}$}
\newcommand{\Dsebs}{$\mathfrak{D}_{7_{\beta}}$ \space}
\newcommand{\Deibs}{$\mathfrak{D}_{8_{\beta}}$ \space}
\newcommand{\Dnibs}{$\mathfrak{D}_{9_{\beta}}$ \space}
\newcommand{\Dtenbs}{$\mathfrak{D}_{10_{\beta}}$ \space}
\newcommand{\Delebs}{$\mathfrak{D}_{11_{\beta}}$ \space}

\newcommand{\demb}{distinct embeddings into \Sph, up to equivalence}
\newcommand{\equi}{equivalence classes of edges}
\newcommand{\gosd}{is the graph obtained by applying a sub-dangle move to ~}
\newcommand{\govb}{is the graph obtained by applying a vert-bar move to ~}
\newcommand{\srsd}{satisfies the requirements to apply the sub-dangle move}

\newcommand{\iplIIns}{intrinsically type II spherical 3-linked}

\newcommand{\VBg}{$G_0 \dU K_2$ }
\newcommand{\VBgme}{$G_0 \dU K_2$ \textit{minus an edge} }
\newcommand{\VBgmv}{$G_0 \dU K_2$ \textit{minus a vertex} }
\newcommand{\VBgce}{$G_0 \dU K_2$ \textit{with a contracted edge} }


\begin{document}
\setcounter{section}{0}
\keywords{intrinsically linked graphs, topological graph theory, linking}
\subjclass[2020]{05C62, 57M15}

\maketitle

\noindent \underline{Abstract:} 
\space   
We exhibit several families of planar graphs that are minor-minimal intrinsically spherical $3$-linked. A graph is \textit {intrinsically spherical 3-linked} if it is planar graph that has, in every
spherical embedding, a non-split 3-link consisting of two disjoint cycles ($S^1$s) and
two disjoint vertices ($S^0$), or a cycle and two pairs of disjoint vertices. We conjecture that $K_4 \dU K_4$, $K_{3,2} \dU K_{3,2}$, and $K_4 \dU K_{3,2}$ form the complete set of minor-minimal intrinsically \textit{type I spherical 3-linked} graphs (that is, in every spherical embedding, have a nonsplit link of two cycles and one $S^0$).

%
%
%
%
%
%
%
%


\section{Introduction} \label{sec:intro}

Dekhordi and Farr \cite{DF} showed that the complete set of minor-minimal intrinsically spherical linked graphs consists of $K_{4}\disunion K_1$, $K_{3,2}\disunion K_1$, and $K_{3,1,1}$, where $K_n$ stands for the complete graph on $n$ vertices, $\disunion$ stands for the disjoint union, and $K_{n_1,n_2,...,n_k}$ stands for the complete multipartite graph on $n_1+...n_k$ vertices, with $k$ partition sets. \textit {Intrinsically spherical linked graphs} (also known as \textit {separating planar graphs}) are planar graphs that have every
spherical embedding containing a non-split link consisting of a cycle ($S^1$) and
two disjoint vertices ($S^0$). Here, we exhibit several families of planar graphs that are minor-minimal \textit {intrinsically spherical $3$-linked}, that is planar graphs that have, in every
spherical embedding, a non-split 3-link consisting of two disjoint cycles ($S^1$s) and
two disjoint vertices ($S^0$), or a cycle and two pairs of disjoint vertices.

During the 1980's, Conway and Gordon \cite{1}, and Sachs \cite{3}, \cite{5}, independently proved that $K_6$ is intrinsically linked, that is, every spatial embedding of $K_6$ contains a pair of cycles that forma a non-split link.  Robertson, Seymour, and Thomas \cite{RST} proved Sachs' conjecture, that the Petersen family of graphs (the seven graphs including $K_6$, obtained from $K_6$ by $\Delta-Y$ and $Y-\Delta$ exchanges) form the complete minor-minimal set of graphs that are intrinsically linked.


Robertson and Seymour's Minors Theorem \cite{RS} states that if $P$ is a minor-closed graph property, then the minor-minimal forbidden graphs for $P$ form a finite set. This implies that the complete set of minor-minimal intrinsically 3-linked graphs must be finite. Recall that a graph is \textit{intrinsically 3-linked} if it contains, in every spatial embedding, cycles that form a non-split link of 3 components. It is still open to classify the complete set of minor-minimal intrinsically 3-linked graphs, and it appears that the list of such graphs will be significantly larger than 7 (see, for example, \cite {FFNP}, \cite{BF}, \cite{OD}). The difficulty of this problem has been the inspiration and motivation for this paper; we aimed to find the complete set of minor-minimal intrinsically spherical 3-linked graphs.  The hope is that this question will be easier for the spherical case. Though the question remains open, we conjecture that $K_4 \dU K_4$, $K_{3,2} \dU K_{3,2}$, and $K_4 \dU K_{3,2}$ form the complete set of minor-minimal intrinsically \textit{type I spherical 3-linked} graphs (that is, in every spherical embedding, have a nonsplit link of two cycles and one $S^0$). We also exhibit several graphs that are minor-minimal intrinsically \textit{type II spherical 3-linked}. In general, it remains open to find families of planar graphs that are minor-minimal intrinsically (spherical) $n-$linked for $n \geq 3$.

\section{Terminology and Background}

First, let us define $S^k$ to be the $k$-sphere.  We call a particular way to place a graph, $G$, into $S^k$ an \textit{embedding} of $G$.  A \textit{link} is a collection of disjoint spheres of various dimensions, embedded within a sphere of greater dimension.  We say that a link, $\ell$, embedded in $S^k$ is \textit{split} if there exists an $S^{k-1}$ embedded in $S^{k} - \ell$ that bounds only part of $\ell$.  We say that a graph, $G$, is \textit{intrinsically linked} if every embedding of $G$ into $S^3$ contains a \textit{non-split} link.

\begin{figure}[H]
\centering
\includegraphics[width=.9\linewidth]{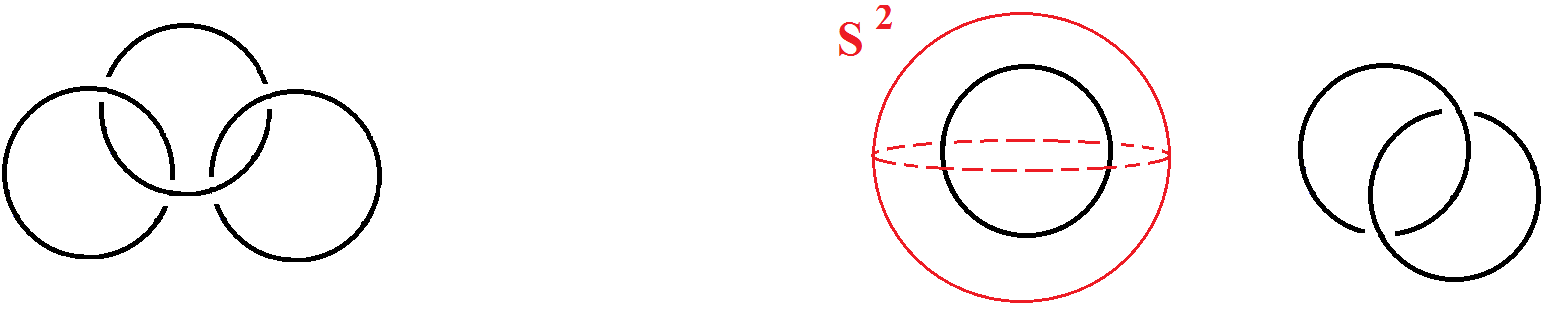}
\caption{[Left]  A \textit{non-split} link of three components.  [Right]  A \textit{split} link of three components.}
\label{fig:splitlink}
\end{figure}

A planar embedding of a graph, $G$, is \textit{spherical linked} if it contains a non-split link of one embedded $S^1$ and one embedded $S^0$, as depicted below:

\begin{figure}[H]
\centering
\includegraphics[width=.2\textwidth, height=.1\textheight]{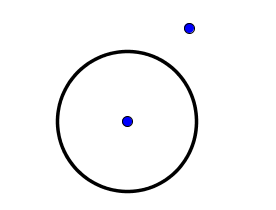}
\end{figure}

We say that a graph, $G$, is \textit{intrinsically spherical linked} if every embedding of $G$ into \Sph ~contains a non-split spherical link.

A spherical embedding of a graph, $G$, is \textit{type I spherical 3-linked} if it contains a non-split link of two embedded $S^1$'s and one embedded $S^0$, as depicted below:

\begin{figure}[H]
\centering
\includegraphics[width=0.7\textwidth]{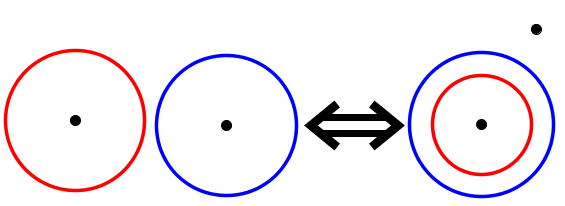}
\label{fig:Type1a}
\end{figure}

A planar embedding of a graph, $G$, is \textit{type II spherical 3-linked} if $G$ contains a non-split link of one embedded $S^1$ and two embedded $S^0$'s, as depicted below:

\begin{figure}[H]
\centering
\includegraphics[width=.3\textwidth, height=.1\textheight]{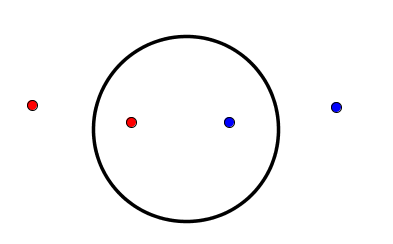}
\end{figure}


When speaking of planar links, it doesn't make topological sense to call 0-spheres ``components," since an $S^{0}$ is not connected.  Henceforth we will refer to an $S^{1}$ or an $S^{0}$ as a \textit{piece} of an $n$-link.  Note that the following definition only makes sense \textit{after} we have chosen which points form $S^{0}$s.






\begin{theorem} (Dekhordi and Farr \cite{DF}). 
A graph, $G$, is intrinsically spherical linked if and only if $G$ contains $K_4 \dU K_1$, $K_{3,2} \dU K_1$, or $K_{3,1,1}$ as a minor.
\end{theorem}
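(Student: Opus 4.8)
The plan is to recast the statement combinatorially. Fix a spherical (equivalently, planar) embedding of a graph $H$, and call a cycle $C$ of $H$ \emph{separating} if each of the two disks it bounds in \Sph~contains a vertex of $H$ in its interior; then $C$ together with one such interior vertex on each side is exactly a non-split link of one $S^1$ and one $S^0$. So an embedding witnesses that $H$ is ``not spherical linked'' precisely when every cycle has a vertex-free side --- call such an embedding \emph{good} --- and $G$ is intrinsically spherical linked if and only if $G$ has no good embedding.

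For the ``if'' direction I would first observe that admitting a good embedding is a minor-closed property: deletions obviously preserve goodness, and a good embedding of $G$ induces one of $G/e$ by performing the contraction in the plane, since a separating cycle of $G/e$ pulls back to a separating cycle of $G$ (using the edge $e$ if the cycle ran through the contracted vertex). Hence being intrinsically spherical linked is preserved under adding edges or vertices or ``uncontracting'', so it suffices to check that each of $K_4 \dU K_1$, $K_{3,2}\dU K_1$, $K_{3,1,1}$ is intrinsically spherical linked. Each has essentially one embedding to examine: $K_4$ is $3$-connected with a unique spherical embedding into four triangular faces, so the isolated vertex lies in a triangle that separates it from the opposite vertex of $K_4$; the copy of $K_{3,2}$ has exactly three quadrilateral faces, each bounded by a $4$-cycle omitting exactly one vertex, so the face containing the isolated vertex supplies a separating $4$-cycle; and $K_{3,1,1}$ is a generalized theta graph on four internally disjoint $x$--$y$ strands (the edge $xy$ and the paths through $a$, $b$, $c$), every embedding being a cyclic order of these strands, and the cycle formed by the strand $xy$ together with the strand opposite to it in that order leaves the remaining two strands --- each carrying a vertex --- on opposite sides, hence is separating.

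For the ``only if'' direction --- the real content --- I would take a graph $G$ that is intrinsically spherical linked and minor-minimal for this property, so every proper minor of $G$ has a good embedding, and show $G$ must be one of the three listed graphs; since every intrinsically spherical linked graph contains such a $G$ as a minor, this suffices. The first phase is a sequence of reductions: from a cut vertex, an isolated or degree-$1$ vertex beyond the single $S^0$-point one may legitimately keep, a degree-$2$ vertex that can be suppressed, or a pair of parallel edges, $G$ has a proper minor $G'$; pasting a good embedding of $G'$ back together with the local structure produces a good embedding of $G$, contradicting that $G$ is intrinsically spherical linked, unless that local structure was already forced. After these reductions $G$ is either $2$-connected or a $2$-connected graph together with one spare isolated vertex, and in either case planarity makes the face structure rigid. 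The second phase bounds the size: a $2$-connected planar graph that is intrinsically spherical linked but avoids all three forbidden minors can only have a small vertex set and a short list of possible embeddings, and checking these finitely many cases leaves exactly $K_4 \dU K_1$, $K_{3,2}\dU K_1$, and $K_{3,1,1}$.

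The hard part is the ``only if'' direction, and within it the interaction between the reduction steps and the three forbidden minors. The delicate point is that $K_{3,2}$ and $K_{3,1,1}$ occur as minors rather than subgraphs, so an innocuous-looking series reduction can create a forbidden minor that was not previously visible; one must track, through every reduction, not only planarity and connectivity but also the non-appearance of any of the three minors, and keep precise account of the one ``free'' $S^0$-vertex --- which is exactly what makes the $6$-vertex graph $K_{3,2}\dU K_1$, rather than a $5$-vertex graph, an obstruction. Once the reduced graph is known to be small and $2$-connected, analysing its finitely many embeddings is routine.
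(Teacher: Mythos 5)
First, a point of comparison: the paper does not prove this theorem at all; it is quoted as background from Dehkordi and Farr \cite{DF}, so there is no in-paper argument to measure yours against. Judged on its own, your ``if'' direction is essentially right and complete in outline: the reformulation via separating cycles, the minor-closedness of admitting a good (non-separating) embedding, and the verifications for $K_4 \dU K_1$, $K_{3,2} \dU K_1$, and $K_{3,1,1}$ (the generalized-theta-graph observation for the last is the correct one) would all go through.

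The ``only if'' direction, however, is a plan rather than a proof, and the gap sits exactly where you locate ``the real content.'' Your second phase rests on the assertion that a $2$-connected planar graph which is intrinsically spherical linked but avoids all three minors ``can only have a small vertex set and a short list of possible embeddings''; no bound is derived and no case list is produced, so that sentence is the theorem restated rather than an argument for it. The reduction phase is also thinner than it looks: merging good embeddings across a cut vertex (or across connected components) requires choosing a face of one piece in which to re-embed the other so that no previously harmless cycle becomes separating, and this is precisely where the bookkeeping of the spare $S^0$ vertex bites --- $K_4$ alone admits a good embedding while $K_4 \dU K_1$ does not, so goodness of a disconnected graph is not the conjunction of goodness of its pieces. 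For what it is worth, Dehkordi and Farr do not argue by directly bounding the minor-minimal obstructions; they first prove a structural characterization of the graphs admitting a non-separating embedding (outerplanar graphs together with a small number of other explicit families) and read the forbidden-minor statement off from that. To complete your route you would need to supply the missing size bound and the finite case analysis, or else switch to such a structural classification.
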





\medskip
A \textit{spherical $n$-link} is a disjoint collection of $n-m$ 1-spheres and $m$ 0-spheres, embedded into \Sph.  
We say a spherical $n$-link, $\ell$, is \textit{split} if  there exists an $S^1$ embedded in $S^{2} - \ell$ that bounds only part of $\ell$.  
We say that a spherical embedding of a graph, $G$, is \textit{linked} if it contains a non-split link of one $S^1$ and one $S^0$.

A graph, $H$, is a \textit{minor} of another graph, $G$, if and only if $H$ can be obtained from $G$ by a sequence of the following: \textit{vertex deletion(s)}, \textit{edge deletion(s)}, (and/or) \textit{edge contraction(s)}.  We say that a graph, $G$, is \textit{minor-minimal} (with respect to a property) if $G$ has that property and no minor of $G$ has that property.

A graph, $G$, is \textit{planar} if and only if $G$ can be embedded into $S^2$. Kuratowski's famous theorem \cite{2} asserts that a graph, $G$, is planar if and only if $G$ does not contain $K_5$ nor $K_{3,3}$ as a minor. 

%
%

A graph, $G$, is \textit{outerplanar} if $G$ can be embedded in $S^{2}$ with all vertices on a common face.

%
%


\begin{theorem}\cite{halin}, \cite{CH} \label{thm:nonouterplanar}
A graph, $G$, is outerplanar if and only if $G$ does not contain $K_{4}$ nor $K_{3,2}$ as a minor. 
\end{theorem}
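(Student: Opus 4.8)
The plan is to reduce the statement to ordinary planarity via the ``cone'' construction. Let $G^{+}$ be the graph obtained from $G \dU K_1$ by joining the new vertex $v$ to every vertex of $G$; equivalently, $G^{+}$ is $G$ with one apex vertex adjoined. The first step is to establish that $G$ is outerplanar if and only if $G^{+}$ is planar. For the forward implication one takes an outerplanar embedding of $G$ (which, by a standard reduction, may be assumed to have all vertices on the boundary of the unbounded face, indeed on a Hamilton cycle in the maximal case), places $v$ in the unbounded region, and joins $v$ to each vertex of $G$ along pairwise disjoint arcs in that region. For the reverse implication one takes a plane embedding of $G^{+}$, deletes $v$ together with its incident edges, and observes that in the resulting embedding of $G$ the face that formerly contained $v$ has every neighbor of $v$ --- that is, every vertex of $G$ --- on its boundary, so $G$ is outerplanar.

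Granting this equivalence, Kuratowski's theorem says $G^{+}$ is planar if and only if $G^{+}$ has neither $K_5$ nor $K_{3,3}$ as a minor, so it suffices to prove that $G^{+}$ has a $K_5$ or $K_{3,3}$ minor if and only if $G$ has a $K_4$ or $K_{3,2}$ minor. The ``if'' direction is direct: given a model of $K_4$ (resp.\ $K_{3,2}$) in $G$ whose branch sets lie in $V(G)$, adjoining the singleton branch set $\{v\}$ yields a model of $K_5$ (resp.\ $K_{3,3}$, placing $v$ in a part of size $3$) in $G^{+}$, since $v$ is adjacent to every vertex of $G$. For the ``only if'' direction, suppose $G^{+}$ has a $K_5$ minor with branch sets $W_1,\dots,W_5$; the vertex $v$ lies in at most one of them, say $W_5$ (or none), so $W_1,\dots,W_4 \subseteq V(G)$ realize a $K_4$ minor of $G$. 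Likewise a $K_{3,3}$ minor of $G^{+}$ with parts $\{A_1,A_2,A_3\}$ and $\{B_1,B_2,B_3\}$ has $v$ in at most one branch set, say $A_3$; then $A_1, A_2, B_1, B_2, B_3 \subseteq V(G)$ realize a $K_{2,3}=K_{3,2}$ minor of $G$. Chaining these equivalences proves the theorem in both directions; as a byproduct it recovers the fact that $K_4$ and $K_{3,2}$ are themselves not outerplanar.

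The step I expect to demand the most care is the equivalence ``$G$ outerplanar $\iff$ $G^{+}$ planar,'' specifically its forward half: one must argue cleanly that all vertices lying on a common face of $G$ can be reached from a single interior point by pairwise disjoint arcs. This is transparent when the outer boundary is a simple cycle, but in general it requires the usual reduction --- completing $G$ to a maximal outerplanar graph, or treating cut vertices and connected components separately --- and some attention to embeddings whose outer boundary walk repeats vertices. Once that topological input is secured, the minor bookkeeping above is routine, the only subtlety being that $v$ can belong to at most one branch set of any minor model, so discarding that branch set always leaves a model living entirely inside $G$.
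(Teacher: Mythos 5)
The paper does not prove this statement; it is quoted as a known result with citations to Halin and to Chartrand--Harary, so there is no in-paper argument to compare against. Your proof is correct, and it is in fact the classical one: reduce outerplanarity of $G$ to planarity of the cone $G^{+}=G\ast K_1$, invoke Kuratowski's theorem (which the paper also quotes), and observe that $K_5$ and $K_{3,3}$ minors of $G^{+}$ correspond exactly to $K_4$ and $K_{3,2}$ minors of $G$ because the apex $v$ lies in at most one branch set of any minor model and is adjacent to everything else. The minor bookkeeping is airtight as written; the only step with genuine topological content is the one you flag, namely joining an interior point of a face to all vertices on its boundary by internally disjoint arcs when the boundary walk may repeat vertices or the graph may be disconnected, and the standard accessibility-of-boundary-points lemma (or your reduction to a maximal outerplanar supergraph) closes that. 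One small omission worth a sentence in a final write-up: in the $K_{3,3}$ case you should note explicitly that if $v$ lies in no branch set then $G$ itself has a $K_{3,3}$, hence a $K_{3,2}$, minor.
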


A planar graph, $G$, is \textit{$n$-connected} if $G$ is connected, and removing $n - 1$ or fewer vertices from $G$ always results in a connected planar graph.



\section{Intrinsically Spherical 3-Linked Graphs} \label{sec:ip3l}

\medskip

\subsection{Type I Spherical 3-Linked Graphs} \space

A spherical embedding of a graph, $G$, is \textit{type I spherical 3-linked} if it contains a non-split link of two embedded $S^1$'s and one embedded $S^0$, as depicted in Figure \ref{fig:type1}:

\begin{figure}[H]
\centering
\includegraphics[width=0.7\textwidth]{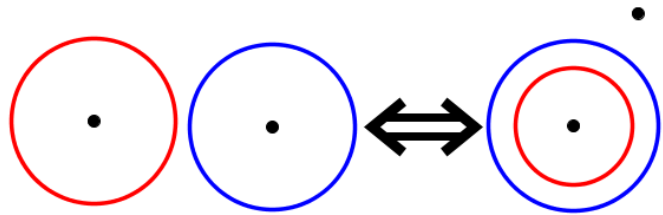}
\label{fig:type1}
\caption{A type I non-split 3-link.}
\end{figure}

\vskip -.5in

\begin{figure}[H]
\centering
\includegraphics[width=0.8\linewidth]{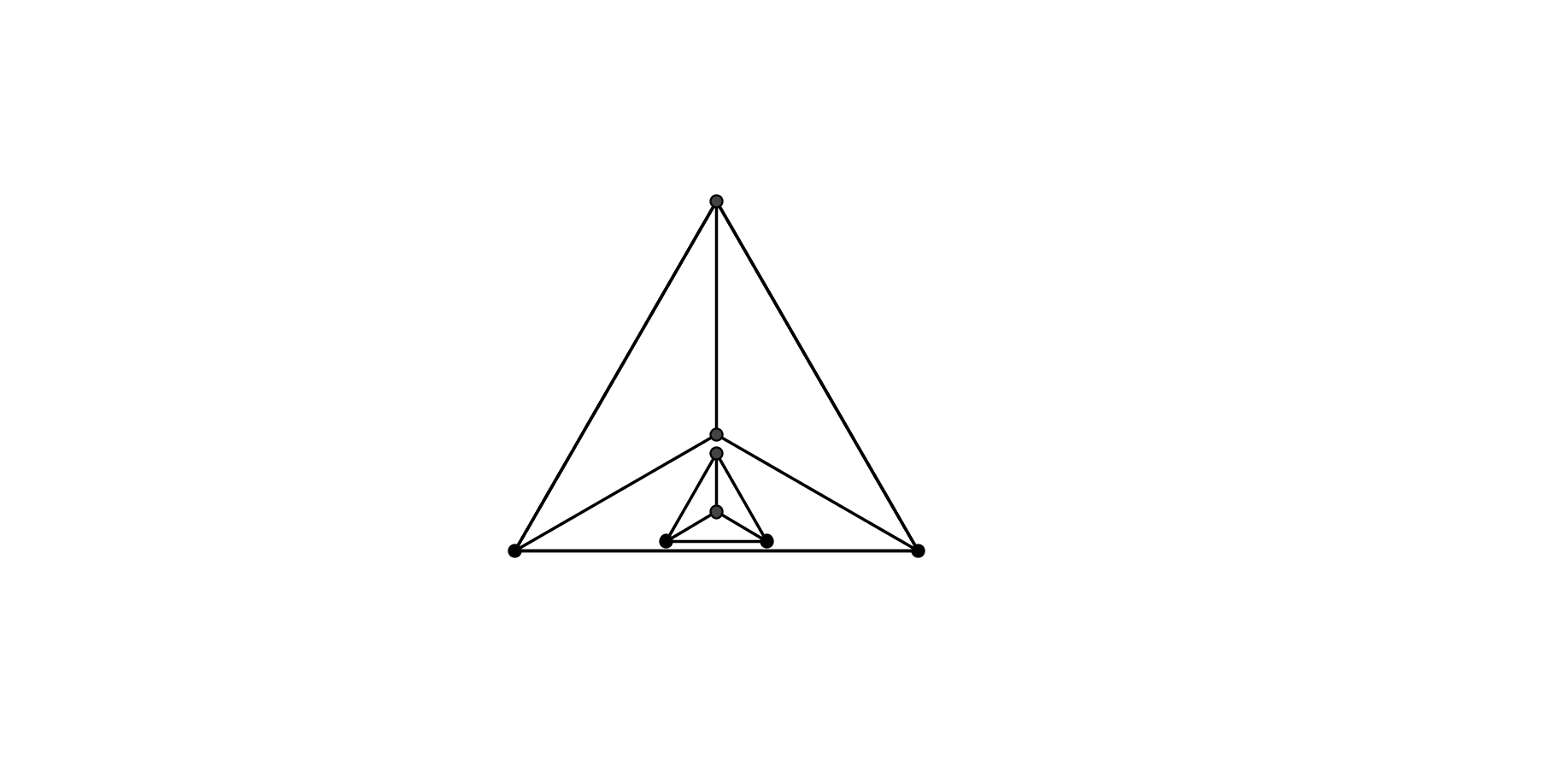}
\caption{$K_4 \dU K_4$ has a unique embedding in \Sph, up to equivalence.}
\label{fig:K4UdotK4together}
\end{figure}

\vskip -.5in

\begin{figure}[H]
\centering
\includegraphics[width=0.8\linewidth]{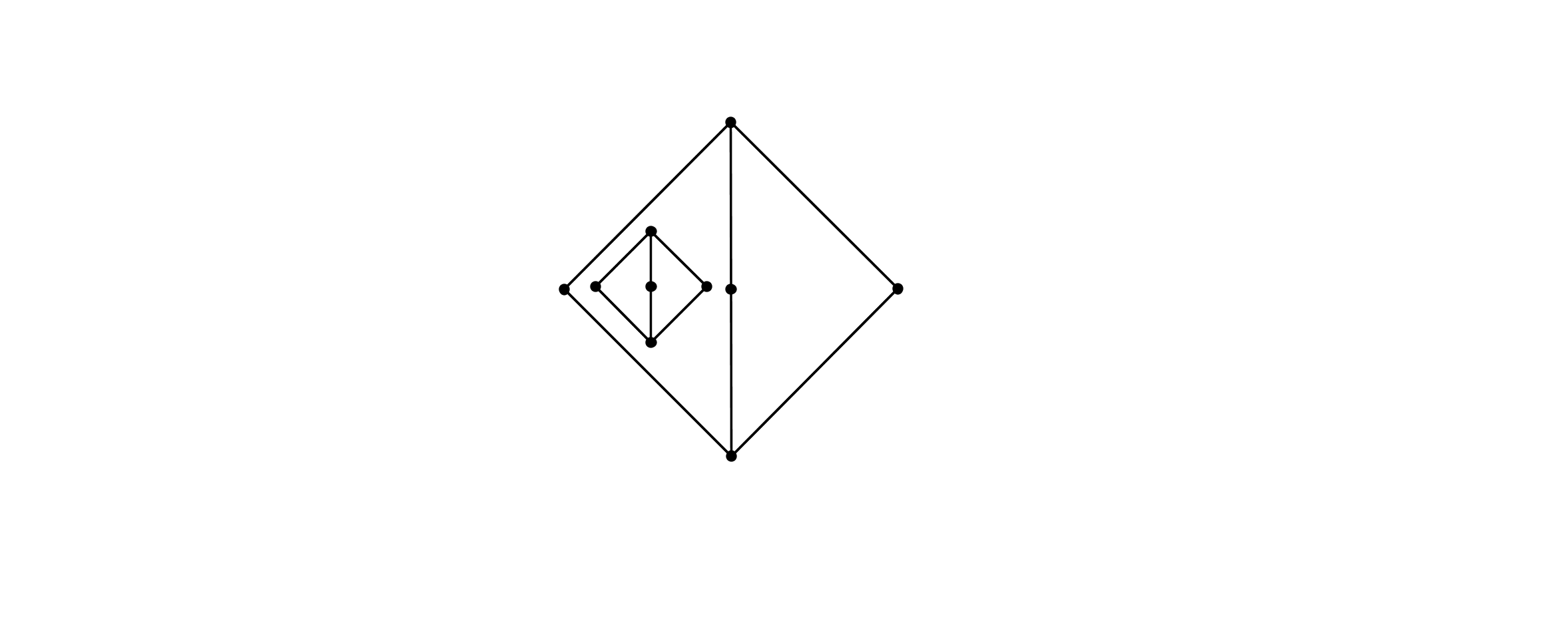}
\caption{$K_{3,2} \dU K_{3,2}$ has a unique embedding in \Sph, up to equivalence.}
\label{fig:K3,2UdotK3,2together}
\end{figure}

\vskip -.15in
\begin{figure}[H]
\centering
\includegraphics[width=0.7\linewidth]{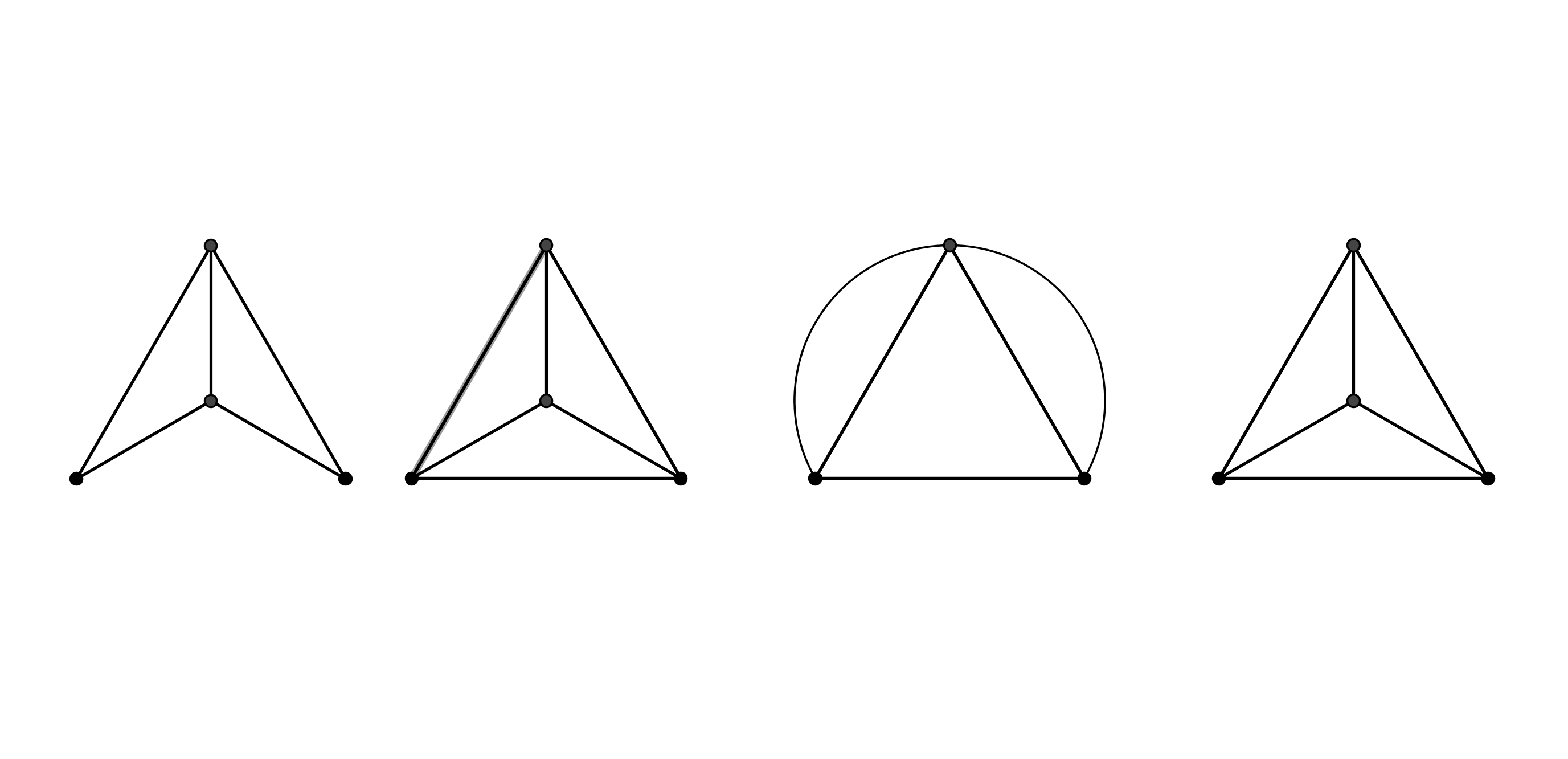}
\vskip -.15in
\caption{[Left] $K_4 \dU K_4$ minus an edge. [Right] $K_4 \dU K_4$ with a contracted edge.}
\label{fig:K4minors}
\end{figure}

\begin{figure}[H]
\centering
\includegraphics[width=0.7\linewidth]{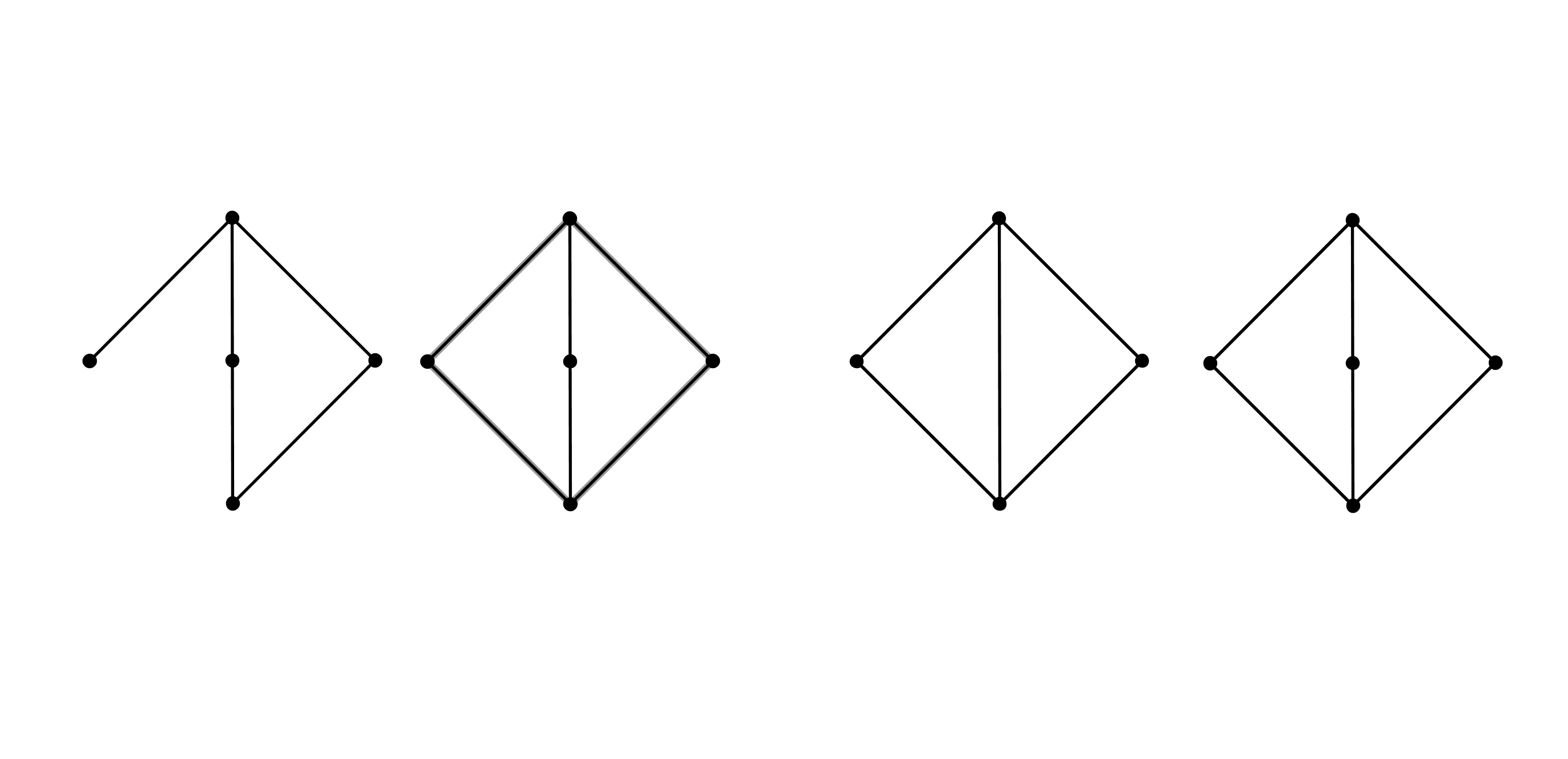}
\caption{[Left] $K_{3,2} \dU K_{3,2}$ minus an edge. [Right] $K_{3,2} \dU K_{3,2}$ with a contracted edge.}
\label{fig:K3minors}
\end{figure}

A graph, $G$, is \textit{intrinsically type I spherical 3-linked} if every spherical embedding of $G$ is type I spherical 3-linked.

\begin{proposition} \label{prop:K4K3,2}
The graphs $K_{4} \dot{\bigcup} K_{4}$, $K_{3,2} \dot{\bigcup} K_{3,2}$, and $K_{4} \dot{\bigcup} K_{3,2}$ are minor-minimal with respect to being intrinsically type I spherical 3-linked.
\end{proposition}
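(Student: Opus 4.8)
The statement asks for two things about each of $G\in\{K_4\dU K_4,\ K_{3,2}\dU K_{3,2},\ K_4\dU K_{3,2}\}$: that $G$ is intrinsically type I spherical $3$-linked, and that no proper minor of $G$ is. The plan for the second part is to use that the property is monotone under passing to a larger graph or un-contracting an edge — a non-split link in an embedding of a minor lifts, through the local moves of subgraph inclusion and edge-uncontraction, to a non-split link of the same topological type in any embedding of the ambient graph — so it is enough to rule out the finitely many graphs obtained from $G$ by a single edge deletion, vertex deletion, or (simplified) edge contraction.

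For the first part, the key input is the following consequence of the Dekhordi--Farr theorem quoted above: for $G_0\in\{K_4,K_{3,2}\}$, in every spherical embedding of $G_0$ and for every point $x$ not on it, there is a cycle $C\subseteq G_0$ and a vertex $v\notin C$ of $G_0$ on the side of $C$ away from $x$. Indeed $G_0\dU K_1$ is intrinsically spherical linked, so the embedding together with $x$ as the isolated vertex carries a non-split link of a cycle $C\subseteq G_0$ and an $S^0$ of two vertices, both off $C$ and on opposite sides of it; since every cycle of $K_4$ (resp.\ of $K_{3,2}$) omits at most one vertex of the graph, those two vertices cannot both belong to $G_0$, so one of them is $x$ and the other is the desired $v$. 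Using this, take an arbitrary spherical embedding of $G=G_1\dU G_2$ with $G_1,G_2\in\{K_4,K_{3,2}\}$, apply the statement to $G_1$ with $x$ a vertex of $G_2$ to get $C_1\subseteq G_1$ and $v_1\in G_1$ with all of $G_2$ (being connected and disjoint from $C_1$) on the side of $C_1$ away from $v_1$, then apply it to $G_2$ with $x=v_1$ to get $C_2\subseteq G_2$ and $v_2\in G_2$ with all of $G_1$ (in particular $C_1$) on the side of $C_2$ away from $v_2$. The disjoint circles $C_1,C_2$ cut $S^2$ into a disk bounded by $C_1$, a disk bounded by $C_2$, and an annulus between them; $v_1$ lies in the first disk, $v_2$ in the second, and $\{v_1,v_2\}$ is disjoint from $C_1\cup C_2$. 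I would then verify directly that $(C_1,C_2,\{v_1,v_2\})$ is a non-split type I $3$-link: a putative separating simple closed curve in the complement must keep $v_1$ and $v_2$ on the same side, but a short case analysis on whether it separates $C_1$ from $C_2$ or not shows every such curve either splits the pair $\{v_1,v_2\}$ or leaves one of its two sides with no piece of the link. As the embedding was arbitrary, $G$ is intrinsically type I spherical $3$-linked.

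For the second part, the one-step minors are, up to isomorphism and up to harmless parallel edges arising from contractions (which can always be drawn as empty bigon faces and so never participate in a non-split link), the graphs $H\dU G_0$ with $G_0\in\{K_4,K_{3,2}\}$ and $H\in\{K_3,\ K_{1,3},\ C_4,\ K_4-e,\ K_{3,2}-e\}$, using $K_4/e\cong K_3$ and $K_{3,2}/e\cong K_4-e$. When $H\in\{K_3,K_{1,3},C_4\}$, the graph $H\dU G_0$ has no two vertex-disjoint cycles, or every two vertex-disjoint cycles leave at most one vertex uncovered — this is a quick vertex count, using that $K_4$ and $K_{3,2}$ have no two disjoint cycles and that each of their cycles omits at most one vertex — so no spherical embedding contains a type I $3$-link at all. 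When $H\in\{K_4-e,K_{3,2}-e\}$, I would instead produce a good embedding: draw $H$ inside a single face of $G_0$ so that every cycle of $H$ omitting a vertex of $H$ bounds an empty disk (for $K_4-e$, let its quadrilateral face be the one abutting the chosen face of $G_0$, leaving both triangles as empty faces; for $K_{3,2}-e$, point the pendant edge outward, leaving the unique $4$-cycle bounding an empty disk on its inner side). In any type I $3$-link in this embedding the two disjoint cycles lie one in $H$ and one in $G_0$, and since a cycle of $G_0$ omits at most one vertex of $G_0$ while the link needs two vertices off the cycles, the $H$-cycle must omit a vertex of $H$; hence it bounds an empty disk, and pushing that circle slightly onto its non-empty side gives a simple closed curve enclosing exactly that one piece, so the link is split. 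Thus no one-step minor is intrinsically type I spherical $3$-linked.

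The step I expect to be the real obstacle is the topology: pinning down, within the paper's ``pieces'' formulation of non-splitness (and the remark that one must first fix which points form the $S^0$s), the two elementary facts above — that the nested configuration $(C_1,C_2,\{v_1,v_2\})$ admits no separating circle, and that any spherical link one of whose $S^1$-pieces bounds a disk meeting no other piece is split — and carrying out the associated case analysis cleanly. The remaining work (enumerating the one-step minors, and the vertex counts for the ``trivial'' family) is routine.
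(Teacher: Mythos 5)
Your proposal is correct, but it reaches the positive half of the statement by a genuinely different route than the paper. The paper's argument is rigidity-based: it observes that $K_4$ (resp.\ $K_{3,2}$) has all faces equivalent, concludes that $K_4\dU K_4$, $K_{3,2}\dU K_{3,2}$ (and implicitly $K_4\dU K_{3,2}$) each have a \emph{unique} spherical embedding up to equivalence, and then simply exhibits the type I $3$-link in that one embedding (Figures 3--4); minor-minimality is likewise handled by displaying good embeddings of the one-step minors and appealing to edge-equivalence to cover all cases. You instead avoid any uniqueness-of-embedding claim: you extract from the Dekhordi--Farr theorem the lemma that in \emph{every} embedding of $K_4$ or $K_{3,2}$ and for every exterior point $x$ there is a cycle separating $x$ from a leftover vertex (the pigeonhole step --- every cycle of these graphs omits at most one vertex, so one point of the $S^0$ must be $x$ --- is the right observation and is sound), and then nest two applications of this lemma to produce the link $(C_1,C_2,\{v_1,v_2\})$ in an arbitrary embedding. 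This buys robustness and generality: your argument would survive for disjoint unions whose components do not have unique spherical embeddings, whereas the paper's hinges on face-transitivity; the price is that you must carry out the explicit non-splitting case analysis on the three complementary regions, which the paper replaces by a picture. For minor-minimality both arguments reduce to one-step minors via minor-monotonicity and then split into the same two cases (no combinatorial $3$-link at all, versus a cycle bounding an empty face that yields a splitting circle); your vertex-counting and empty-face arguments make explicit what the paper leaves to Figures 5--6, and your parenthetical about parallel edges from contractions bounding empty bigons correctly disposes of the one case (a bigon of $K_4/e$ paired with a triangle of the other component) where a naive vertex count alone would not rule out a type I $3$-link. I see no gap.
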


\begin{proof}

First, we embed $K_4$ into \Sph.  This embedding determines four faces, each of which are equivalent.  Now, we embed another $K_4$ into our \Sph.  As each face was equivalent, we have that $K_4 \dU K_4$ has a unique planar embedding, up to equivalence.  By examining Figure \ref{fig:K4UdotK4together}, we see that $K_4 \dU K_4$ is \iplI.

Now, we will verify that $K_4 \dot{\bigcup} K_4$ is \mmI.  
Before we begin, we note that each edge in $K_4 \dot{\bigcup} K_4$ is equivalent to any other edge.  
Now, by examining Figure \ref{fig:K4minors} [Left], we see that $K_4 \dot{\bigcup} K_4$ minus an edge fails to be \iplI.  Thus, if we delete any vertex from $K_4 \dot{\bigcup} K_4$ the resulting graph will not be \iplI, since each vertex is connected to an edge in $K_4 \dot{\bigcup} K_4$.  
Finally, by examining Figure \ref{fig:K4minors} [Right], we see that $K_4 \dot{\bigcup} K_4$ with a contracted edge fails to \iplI.  Hence, no minor of $K_4 \dot{\bigcup} K_4$ is \iplI.  Therefore, $K_4 \dot{\bigcup} K_4$ is \mmI.

Next, we embed $K_{3,2}$ into \Sph.  This embedding determines three faces, each of which are equivalent.  Now, we embed another $K_{3,2}$ into our \Sph.  As each face was equivalent, we have that $K_{3,2} \dU K_{3,2}$ has a unique planar embedding, up to equivalence.  By examining Figure \ref{fig:K3,2UdotK3,2together}, we see that $K_{3,2} \dU K_{3,2}$ is \iplI.

Now, we will verify that $K_{3,2} \dU K_{3,2}$ is \mmI.  
Before we begin, we note that each edge in $K_{3,2} \dU K_{3,2}$ is equivalent to any other edge.  Now, by examining Figure \ref{fig:K3minors} [Left], we see that $K_{3,2} \dU K_{3,2}$ minus an edge fails to be \iplI.  Thus, if we delete any vertex from $K_{3,2} \dU K_{3,2}$ the resulting graph will not be \iplI, since each vertex is connected to an edge in $K_{3,2} \dU K_{3,2}$.  Finally, by examining Figure \ref{fig:K3minors} [Right], we see that $K_{3,2} \dU K_{3,2}$ with a contracted edge fails to \iplI.  Hence, no minor of $K_{3,2} \dU K_{3,2}$ is \iplI.  Therefore, $K_{3,2} \dU K_{3,2}$ is \mmI.

Lastly, we note that the proof that $K_4 \dU K_{3,2}$ is \mmI ~is analogous to the above argument.

This completes our proof.
\end{proof}

\begin{conjecture}
The graphs $K_4 \dU K_4$ $K_{3,2} \dU K_{3,2}$, and $K_4 \dU K_{3,2}$ form the complete minor-minimal set of intrinsically type I spherical 3-linked graphs.
\end{conjecture}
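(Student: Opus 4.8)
\medskip
\noindent\textbf{A proposed approach to the conjecture.}

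\emph{Step 1: reduce to an embedding-free statement.} By Theorem~\ref{thm:nonouterplanar} the minor-minimal non-outerplanar graphs are $K_4$ and $K_{3,2}$, so the three graphs in the conjecture are exactly the disjoint unions of two of them. Hence, for any graph $G$: $G$ has one of $K_4 \dU K_4$, $K_4 \dU K_{3,2}$, $K_{3,2} \dU K_{3,2}$ as a minor $\iff$ $G$ contains two vertex-disjoint non-outerplanar subgraphs. [$\Leftarrow$: each such subgraph has a $K_4$ or $K_{3,2}$ minor; $\Rightarrow$: the preimages of the two factors of the minor are vertex-disjoint subgraphs, each with a $K_4$ or $K_{3,2}$ minor, hence each non-outerplanar by Theorem~\ref{thm:nonouterplanar}.] Using Proposition~\ref{prop:K4K3,2}, the conjecture is therefore equivalent to: \emph{every intrinsically type I spherical $3$-linked planar graph contains two vertex-disjoint non-outerplanar subgraphs}. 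I would also record the fact, from the analysis of planar links, that a non-split type I $3$-link in $S^2$ is, up to relabeling the two vertices of the $S^0$, a \emph{nested configuration} — a vertex $p$ strictly inside a cycle $C_1$, with $C_1 \cup \{p\}$ strictly inside a vertex-disjoint cycle $C_2$, and a vertex $q$ strictly outside $C_2$ — and conversely any such configuration is a non-split type I $3$-link.

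\emph{Step 2: the converse, for context.} If $G$ contains disjoint non-outerplanar subgraphs $H_1, H_2$, then replacing each by a $2$-connected non-outerplanar block (legitimate since a graph is outerplanar iff all its blocks are) we may take $H_1, H_2$ to be $2$-connected. In any spherical embedding of $G$, the connected graph $H_2$ lies in one face $F$ of the embedding of $H_1$; since $H_1$ is $2$-connected, $C_2 := \partial F$ is a cycle. Non-outerplanarity of $H_1$ forces a vertex $q$ of $H_1$ off $\partial F$, hence strictly on the side of $C_2$ away from $F$; non-outerplanarity of $H_2$, embedded in the open disk $F$, forces a vertex $p$ of $H_2$ strictly inside some cycle $C_1 \subseteq H_2 \subseteq F$. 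Then $p, C_1, C_2, q$ are pairwise vertex-disjoint and form a nested configuration, and a short check (a circle in the complement misses $C_1$ and $C_2$, so it lies in the innermost disk, the annulus between them, or the outermost disk, and in no case separates the pieces since the $S^0$ always straddles) shows the link is non-split. So $G$ is intrinsically type I spherical $3$-linked; together with Step~3 this identifies the intrinsically type I spherical $3$-linked planar graphs as exactly those with two disjoint non-outerplanar subgraphs.

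\emph{Step 3: the hard direction, and the main obstacle.} We must show that a planar $G$ with no two vertex-disjoint non-outerplanar subgraphs admits an embedding with no nested configuration. Two easy cases show the mechanism. If $G$ is outerplanar, embed it with all vertices on one face: no vertex is strictly inside any cycle, so there is no nested configuration. If $G - v$ is outerplanar for some vertex $v$, embed $G - v$ with all vertices on the outer face and place $v$ and its edges in that face: every cycle missing $v$ then encloses no vertex, so it can be neither the inner cycle $C_1$ (which encloses $p$) nor the outer cycle $C_2$ (which encloses $C_1 \ni v$) of a nested configuration, so again none exists. The crux is a \emph{structure theorem} for planar graphs with no two vertex-disjoint non-outerplanar subgraphs, equivalently (by the outerplanar analogue of Kuratowski's theorem) with no two vertex-disjoint subdivisions of $K_4$ or $K_{3,2}$. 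I expect this class to be: outerplanar graphs, graphs outerplanar after deleting one vertex, and a short list of sporadic graphs and small families in which all non-outerplanarity is concentrated in one bounded ``core'' — wheels $W_n$, triangular-base bipyramids $\overline{K_2} + C_n$, and their controlled modifications — with the octahedron $K_{2,2,2}$ (every vertex-deleted subgraph of which is a $4$-wheel, hence non-outerplanar) showing that the single-vertex-apex description alone does not suffice. Granting the structure theorem, each family is finished by an explicit embedding: wheels have no two vertex-disjoint cycles at all; the (unique, by $3$-connectivity) embedding of $\overline{K_2} + C_n$ has no vertex strictly outside any cycle that could serve as $C_2$; and the finitely many sporadic graphs are too small to host a nested configuration, which needs at least $3+3+1+1 = 8$ suitably nested vertices. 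I expect the structure theorem — pinning down exactly which ``near-outerplanar'' graphs occur, and organizing the resulting embeddings uniformly — to absorb essentially all of the difficulty; Steps 1 and 2 and the two easy cases of Step 3 are routine by comparison.
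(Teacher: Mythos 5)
The statement you are addressing is stated in the paper as a \emph{conjecture}: the authors prove only the minor-minimality of the three graphs (Proposition~\ref{prop:K4K3,2}) and offer no argument for completeness, so there is no proof in the paper to compare against. Your Steps 1 and 2 are sound and genuinely useful: the translation via Theorem~\ref{thm:nonouterplanar} of ``contains one of $K_4 \dU K_4$, $K_4 \dU K_{3,2}$, $K_{3,2} \dU K_{3,2}$ as a minor'' into ``contains two vertex-disjoint non-outerplanar subgraphs,'' the identification of the non-split type~I configuration as the nested one (which I checked: any other placement of the two points of the $S^0$ relative to the three regions cut out by the two cycles admits a splitting circle), and the argument that two disjoint $2$-connected non-outerplanar subgraphs force a nested configuration in every spherical embedding are all correct. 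This correctly reduces the conjecture to: every planar graph with no two vertex-disjoint non-outerplanar subgraphs admits a spherical embedding with no nested configuration.

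The genuine gap is Step 3, and it is not a small one: the entire content of the conjecture now lives in a ``structure theorem'' for planar graphs with no two vertex-disjoint non-outerplanar subgraphs (equivalently, no two disjoint subdivisions of $K_4$ or $K_{3,2}$), and you neither state this theorem precisely nor prove it --- the list you give is prefaced by ``I expect,'' the ``controlled modifications'' and ``sporadic graphs'' are unspecified, and there is no argument that the list is exhaustive. This is the analogue, for excluded disjoint non-outerplanar subgraphs, of the Lov\'asz--Dirac characterization of graphs with no two disjoint cycles, and such characterizations are typically the hard part of results of this type; nothing in your write-up engages with why the enumeration should close up. Moreover, even granting a list, the per-family verifications are only gestured at (e.g., the claim about $\overline{K_2}+C_n$ requires checking that every cycle in the unique embedding avoiding one apex has a vertex-free side, which is true but needs the observation that every non-outerplanar subgraph of a wheel contains the entire rim). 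In short: your proposal is a correct and well-organized reduction of the conjecture to a concrete extremal/structural problem, which is worth recording, but it is a research program rather than a proof; the conjecture remains open under your approach exactly where it was open before --- at the classification of the ``near-outerplanar'' planar graphs and the construction of link-free embeddings for each class.
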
 
 

\subsection{Type II Spherical 3-Linked} \space

A planar embedding of a graph, $G$, is \textit{type II spherical 3-linked} if $G$ contains a non-split link of one embedded $S^1$ and two embedded $S^0$'s, as depicted in Figure \ref{II}.

\begin{figure}[H]
\centering
\includegraphics[width=.3\textwidth, height=.1\textheight]{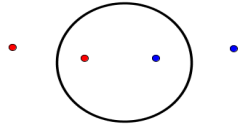}
\caption{A type II non-split spherical 3-link.}
\label{II}
\end{figure}

We say that a graph, $G$, is \textit{intrinsically type II spherical 3-linked} if every spherical embedding of $G$ is type II spherical 3-linked.

\subsubsection{Vertices-Bar Exchange} \space
In this sub-section, we will define the Vertex-Bar exchange, and then show how it preserves the property of being intrinsically minor-minimal intrinsically type II spherical linked.

\medskip

\begin{figure}[H]
\centering
\includegraphics[width=0.7\linewidth]{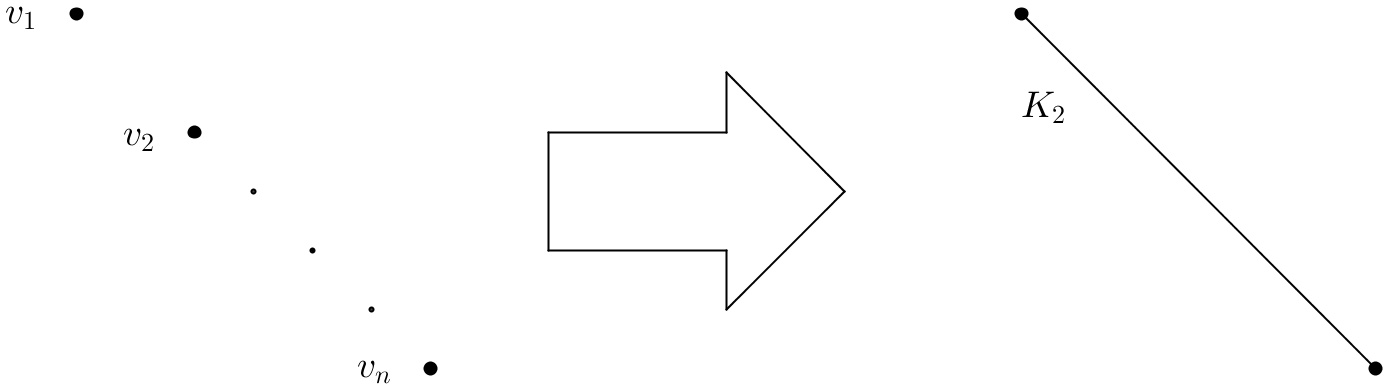}
\caption{An illustration of the Vertices-Bar exchange.}
\label{fig:VertexBarMove}
\end{figure}

\begin{proposition}
Suppose that a graph, $G$, satisfies the following:  \begin{enumerate}[i]

\item  $G$ is \mmII,  

\item  $G = G_0 \dU \{v_1, v_2, ... , v_n\}$, for some $n \in \N$ with $n \geq 3$ (where $G_0$ is a connected planar graph),

\item  $G_0 \dU \{v_1, v_2\}$ contains a type II spherical 3-link whenever $G_0$ is embedded into \Sph and the two vertices $v_1$ and $v_2$ are embedded into any one face of $G_0$,

\item   In each embedding $\varphi$ of $G_0 - e$ into \Sph, there is at least one face, $F_\varphi$, such that $G_0 \dU \{v_1, v_2, ... , v_n\}$ is not type II spherical 3-linked when $\{v_1, v_2, ... , v_n\}$ is embedded into $F_\varphi$, 

\item   In each embedding $\varphi$ of $G_0 \backslash e$ into \Sph, there is at least one face, $F_\phi$, such that $G_0 \dU \{v_1, v_2, ... , v_n\}$ is not type II spherical 3-linked when $\{v_1, v_2, ... , v_n\}$ is embedded into $F_\phi$. 
\end{enumerate}
\noindent Then, \VBg is \mmII.  

\end{proposition}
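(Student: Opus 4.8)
The plan is to verify two things: that \VBg is intrinsically type II spherical 3-linked, and that no proper minor of it is. For the second point it will be enough to examine only the minors obtained from \VBg by a single vertex deletion, edge deletion, or edge contraction, since if a graph has an intrinsically type II spherical 3-linked minor then it is itself intrinsically type II spherical 3-linked; equivalently, the class of graphs that are \emph{not} intrinsically type II spherical 3-linked is closed under taking minors, so if every one-step minor fails to have the property, so does every proper minor.

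For the first point, I would take an arbitrary spherical embedding of \VBg and restrict it to $G_0$. This gives a spherical embedding of the connected graph $G_0$, inside one face $F$ of which the $K_2$ (being connected) sits, with both endpoints $v_1,v_2$ in $F$. Hypothesis (iii) then yields a non-split type II 3-link $\ell$ in the embedded graph $G_0 \dU \{v_1,v_2\}$, consisting of a cycle of $G_0$ together with two pairs of vertices of $G_0 \dU \{v_1,v_2\}$. Each piece of $\ell$ is a cycle or a vertex of \VBg, and whether $\ell$ is split is a property of $\ell$ as a subset of \Sph, independent of the ambient graph; so $\ell$ is a non-split type II 3-link in the embedded \VBg, proving the first point.

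For the minors touching the $K_2$: deleting $v_1$ or $v_2$, or contracting the edge $v_1v_2$, produces $G_0$ with one isolated vertex adjoined, while deleting the edge $v_1 v_2$ produces $G_0 \dU \{v_1,v_2\}$. Each of these is obtained from $G = G_0 \dU\{v_1,\dots,v_n\}$ by deleting at least one $v_i$ — this is exactly where $n\ge 3$ enters, guaranteeing $G_0\dU\{v_1,v_2\}$ is a \emph{proper} minor of $G$ — so minor-minimality of $G$ shows none of them is intrinsically type II spherical 3-linked. For the minors touching $G_0$: given an edge $e$ of $G_0$, I would choose any spherical embedding $\varphi$ of $G_0 - e$, apply hypothesis (iv) to obtain a face $F_\varphi$ into which embedding $v_1,\dots,v_n$ gives no type II 3-link, and then embed the $K_2$ into $F_\varphi$ instead. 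Since no cycle of $(G_0 - e)\dU K_2$ uses the edge $v_1 v_2$ and adjoining that edge cannot change whether a planar link is split, every type II 3-link of $(G_0-e)\dU K_2$ in this embedding is also one of $(G_0-e)\dU\{v_1,\dots,v_n\}$ with the $v_i$ in $F_\varphi$; there are none, so $(G_0-e)\dU K_2$ is not intrinsically type II spherical 3-linked, and hypothesis (v) treats a contracted edge $e$ of $G_0$ in the same way. Finally, any vertex $w$ of $G_0$ has an incident edge $e$ (as $G_0$ is connected with at least two vertices, since $G$ must contain a cycle), and $(G_0-w)\dU K_2$ is obtained by deleting $w$ from $(G_0 - e)\dU K_2$, hence is a minor of a graph that is not intrinsically type II spherical 3-linked and so is not one either. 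Together with the first point this shows \VBg is \mmII.

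The step I expect to demand the most care is the translation between the ``$n$ isolated vertices in one face'' form in which hypotheses (iv) and (v) are phrased and the ``$K_2$ in one face'' form needed for $(G_0-e)\dU K_2$ and $(G_0\backslash e)\dU K_2$: one must check that adjoining the edge $v_1v_2$ inside a face creates no new cycle (hence no new $S^1$) and cannot alter the splitness of any link, while adjoining or removing isolated vertices only enlarges or shrinks the pool of available $S^0$'s — so that the presence or absence of a type II 3-link is unchanged under passing between the two forms. Everything else reduces to a direct appeal to the stated hypotheses or to the minor-minimality of $G$.
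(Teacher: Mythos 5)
Your proposal is correct and follows essentially the same route as the paper's proof: hypothesis (iii) gives the intrinsic type II 3-linking since the $K_2$ must land in a single face of $G_0$, hypotheses (iv) and (v) handle deletion/contraction of $G_0$-edges by placing the $K_2$ in the bad face $F_\varphi$, the minors involving the $K_2$ reduce via minor-minimality of $G$ to $G_0\dU\{v_1,v_2\}$ or $G_0\dU\{v_1\}$ (which is where $n\ge 3$ is used), and connectivity of $G_0$ disposes of vertex deletions. Your extra care in translating between the ``$n$ isolated vertices in a face'' form of (iv)--(v) and the ``$K_2$ in a face'' form (no new cycles, splitness unaffected by the extra edge) is a point the paper glosses over but is handled correctly here.
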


\noindent
\textit{Proof.}  Suppose that $G$ is a graph that satisfies the properties above.  

Notice, $K_2$  contains exactly two vertices and can only possibly be embedded into any one face of each embedding of $G_0$ into \Sph.  Thus, by hypothesis $iii$, \VBg is \iplIIns.  

Now, we will verify that \VBg is \mmII.  
Note that each edge in $G$ is exclusively contained in $G_0$ or $K_2$. 
Consider $G_0 \dU K_2 ~minus~an~edge$.  By hypothesis $iv$, we know that there exists an embedding $\varphi$ of \VBgme that is not type II spherical 3-linked whenever we delete an edge contained in $G_0$; namely, when we embed $K_2$ into $F_\varphi$.  
Furthermore, since \VBgme equals $G_0 \dU \{v_1, v_2\}$ when we delete the edge contained in $K_2$, we know that each embedding of \VBgme fails to be \iplIIns, by hypotheses $i$ and $ii$.  

Hence, as $G_0$ is connected, we have that each embedding of \VBgmv is not \iplIIns, as well, since we cannot delete a vertex from \VBg without also deleting an edge.  

Lastly, consider $G_0 \dU K_2 ~with~a~contracted~edge$.  
By hypothesis $v$, we know that there exists an embedding $\phi$ of \VBgce that is not type II spherical 3-linked whenever we contract an edge contained in $G_0$; namely, when we embed $K_2$ into $F_\phi$.  Furthermore, since \VBgce equals $G_0 \dU \{v_1\}$ when we contract the edge contained by $K_2$, we know that each embedding of \VBgce fails to be \iplIIns, by hypotheses $i$ and $ii$.  Thus, no minor of \VBg can be \iplIIns, so \VBg is therefore \mmII.  

\begin{flushright}
$\square$
\end{flushright}


\subsubsection{Subdivisions-Dangle Move} \space
In this sub-section, we will define the Subdivisions-Dangle move, and then show how it preserves the property of being intrinsically minor-minimal intrinsically type II spherical linked.

\medskip

We define a \textit{dangle} in a graph, $G$, to be a $K_2$ with exactly one vertex identified with a subdivision on some edge of $G$ (see Figure 4.6).

We say that $G$ is \textit{dangle-able} if $G$ meets the criterion in Proposition 3.4.

We call $G$ \textit{basic}, with respect to being dangle-able, if $G$ is dangle-able and does not contain any dangles.

\begin{figure}[H]
\centering
\includegraphics[width=0.7\linewidth]{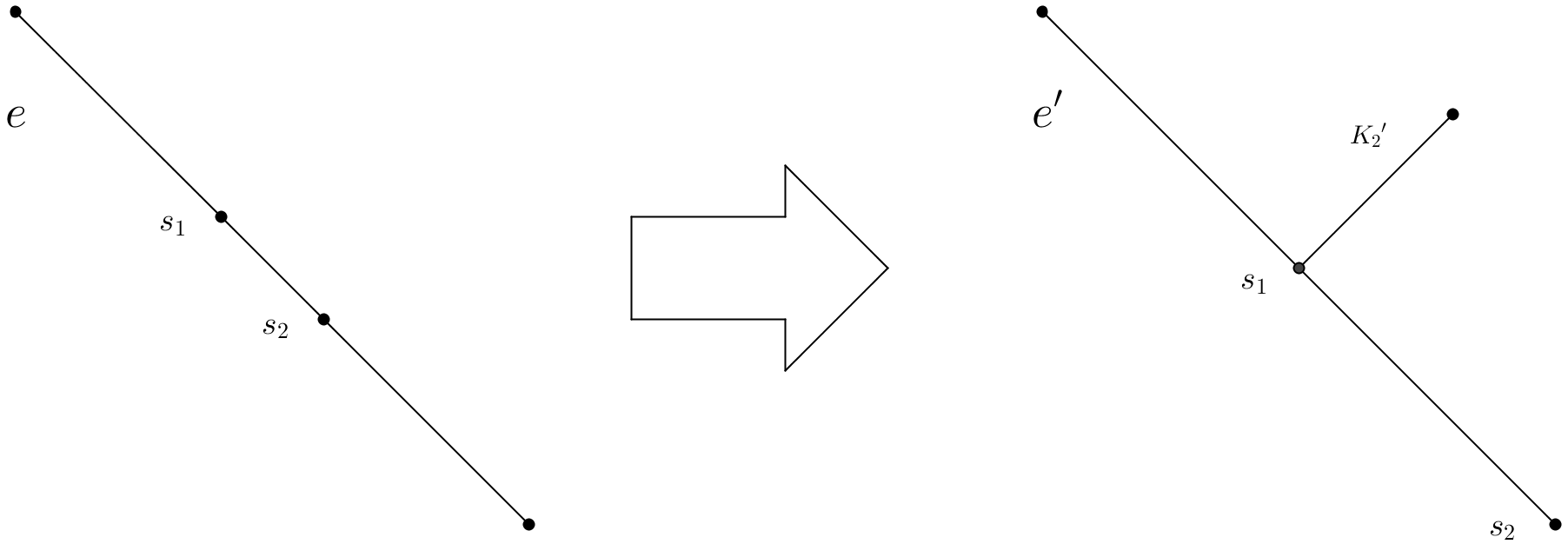}
\caption{An illustration of the Subdivisions-Dangle move.}
\label{fig:SubDangleMove}
\end{figure}

\newpage

\begin{proposition}
Suppose that a graph, $G$, satisfies the following:  \begin{enumerate}[i] 

\item  $G$ is \mmII,

\item  $G$ is a connected planar graph,

\item  $G$ contains an edge, $e$, with two subdivisions $s_1$ and $s_2$ on $e$,

\item  $G \dU \{v\}$ with a contracted edge between $s_2$ and the adjacent endpoint of $e$ is not \iplIIns,  

\item  $G$ with a contracted edge between $s_2$ and the adjacent endpoint of $e$ with a vertex of $K_2$ identified with an endpoint of $e$ is not \iplIIns.

\end{enumerate} 

\vskip -.1in
\noindent Then, the graph $G'$ obtained by contracting the edge between $s_2$ and the adjacent endpoint of $e$ and then identifying a vertex of $K_2$ with $s_1$ is \mmII; we call this particular $K_2$, $K_{2}'$, and we call this new edge including $K_{2}'$, $e'$. 
\end{proposition}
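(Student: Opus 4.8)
The plan is to follow the template of the proof of the Vertices--Bar proposition above: first verify that $G'$ is \iplIIns{}, then verify that no proper minor of $G'$ is \iplIIns{}. For the second part it suffices to treat the one-operation minors, since the property ``not \iplIIns{}'' is closed under taking minors --- an embedding of a supergraph induces an embedding of a minor by performing the corresponding deletions and contractions, and any non-split type II 3-link of the minor lifts back to the supergraph. Throughout, write $a,b$ for the endpoints of $e$ with $s_1$ adjacent to $a$ and $s_2$ adjacent to $b$, let $w$ be the vertex of $K_2'$ other than $s_1$ (so $e'=s_1w$), and put $G_1 := G$ with $s_2b$ contracted; then $G'$ is $G_1$ with the pendant edge $e'$ attached at $s_1$, and $G_1$ is a proper minor of $G$.

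For the intrinsic direction, take any spherical embedding $\psi'$ of $G'$ and \emph{retract the dangle}: delete $w$ and $e'$, then reinsert a subdivision vertex $\tilde s_2$ on the arc $s_1b$ close to $s_1$; this yields a spherical embedding $\psi$ of $G$. Since $G$ is \iplIIns{}, $\psi$ carries a non-split type II 3-link $(C,\{p_1,q_1\},\{p_2,q_2\})$, which I transfer to $\psi'$ by cases. If $\tilde s_2$ lies on neither $C$ nor the four $S^0$-points, the link already sits in $G_1\subseteq G'$ with its embedding unchanged. If $\tilde s_2\in C$, replace $C$ by the cycle obtained by smoothing $\tilde s_2$; as $\tilde s_2$ sits on the arc $s_1b$ this cycle is the very same subset of \Sph{}, so the link is unchanged. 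If $\tilde s_2$ is an $S^0$-point, replace it by $w$; here $\tilde s_2\notin C$ forces $s_1\notin C$ (else both edges at the degree-two vertex $s_1$ of $G_1$ would lie on $C$, contradicting that the one coming from $s_1\tilde s_2$ does not), so a neighbourhood of $s_1$, which contains $\tilde s_2$ and meets the face carrying $w$, lies on one side of $C$; hence $w$ is on the same side of $C$ as $\tilde s_2$ and the new $S^0$-pair still straddles $C$. In every case $\psi'$ has a non-split type II 3-link, so $G'$ is \iplIIns{}.

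For minor-minimality I would run through the operations. On the dangle: $G'/e'$, $G'-w$, and $G'-s_1$ are minors of $G_1$, hence not \iplIIns{} by the minor-minimality of $G$; and $G'-e' = G_1\,\dU\,\{w\}$ is exactly $(G\,\dU\,\{v\})$ with $s_2b$ contracted, so hypothesis (iv) supplies a bad embedding. On the two edges of the subdivided path: $G'-as_1$ and $G'-s_1b$ are isomorphic to the proper minors $G-as_1$ and $G-s_2b$ of $G$ (relabel $w$ as the freed subdivision vertex), and likewise $G'-a\cong G-a$ and $G'-b\cong G-b$; meanwhile $G'/as_1$ and $G'/s_1b$ are minors of ``$G_1$ with a $K_2$-vertex identified with an endpoint of $e$'', which is not \iplIIns{} by hypothesis (v). What remains is every operation on an edge or a vertex of the ``$G$-part'' of $G'$: each produces the graph obtained by applying the Subdivisions--Dangle move to a proper minor of $G$, namely $(G-f)'$, $(G/f)'$, or $(G-r)'$.

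This last family is where I expect the real work. It does \emph{not} reduce to a minor of something already handled, because attaching a dangle to a graph that is not \iplIIns{} can create a type II 3-link --- the vertex $w$ provides a new $S^0$-point --- so restriction is not available. The intended resolution runs the positional argument of the intrinsic direction in reverse: take an embedding of the proper minor $G-f$ (resp.\ $G/f$, $G-r$) with no non-split type II 3-link, which exists since $G$ is minor-minimal, and place $w$ so close to $s_1$ that any non-split type II 3-link of the resulting embedding of $(G-f)'$ --- after smoothing $w$'s arc if $w$ lies on the cycle, or trading $w$ for $\tilde s_2$ if it is an $S^0$-point --- would descend to one of $G-f$, a contradiction. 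The one configuration this does not instantly kill, a cycle running through both edges at $s_1$ while $w$ serves as an $S^0$-point, is precisely what hypotheses (iv) and (v) are calibrated to exclude for the graphs that actually occur. Nailing that case --- matching faces, the two sides of the cycle, and the four $S^0$-points across $G$, $G_1$, and $G'$ --- is the technical heart; everything else is bookkeeping.
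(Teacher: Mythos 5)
Your overall architecture is the same as the paper's: show that $G'$ inherits the intrinsic type II spherical 3-linking of $G$ via the positional correspondence between the doubly subdivided edge $e$ and the dangle, then check minor-minimality operation by operation, using hypothesis (iv) for deletion of the dangle edge, hypothesis (v) for contraction of the two path edges at $s_1$, isomorphisms with minors of $G$ for the remaining operations on $e'$, and the minor-minimality of $G$ for everything else. Your bookkeeping on the $e'$-operations is correct and in places more careful than the paper's (for instance, the case analysis transferring a link of the retracted embedding back to $G'$, and the observation that $G'-s_1$ is a minor of $G_1$ after deleting the two edges at $s_1$).

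The gap is exactly where you place it, and you have not closed it: for an edge $f$ or vertex $r$ of $G$ away from $e$, you must exhibit a spherical embedding of $(G-f)'$ (resp.\ $(G/f)'$, $(G-r)'$) with no non-split type II 3-link. Taking the link-free embedding of $G-f$ guaranteed by minor-minimality of $G$ and attaching $w$ near $s_1$ does not finish the job, because a cycle $C$ of $(G-f)'$ passing through $a$--$s_1$--$b'$ places $w$ strictly inside one of its two faces, whereas the corresponding cycle of $G-f$ passes through $s_2$, so $s_2$ lies \emph{on} that cycle and cannot substitute for $w$ as an $S^0$-point; such a link of $(G-f)'$ need not descend to $G-f$ at all. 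Your suggestion that hypotheses (iv) and (v) are ``calibrated to exclude'' this configuration does not work as stated: those hypotheses concern $G_1 \dU \{v\}$ and $G_1$ with a dangle at an \emph{endpoint} of $e$, not the graphs $(G-f)'$, $(G/f)'$, $(G-r)'$, and the property of not being intrinsically type II spherical 3-linked does not transfer from the former to the latter by any minor relation. To be fair, the paper's own proof disposes of this step with the single assertion that $e'$ ``can only be embedded into the equivalent faces that $e$ can,'' which amounts to the same unproved claim that attaching the dangle to a link-free embedding cannot create a new link; so you have not missed an argument the paper actually supplies, but your proof, like the paper's, is incomplete at precisely this point until that claim (or an additional hypothesis covering it) is established.
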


\vskip -.2in
\begin{proof} Suppose that $G$ is a graph that satisfies the properties above.  Consider $G'$.  Notice that, for any spherical embedding, $G'$ and $G$ share the same number of faces and that each face of $G'$ is equivalent to that of $G$.  Further, notice that $e'$ can only be embedded into the equivalent faces of $G'$ that $e$ can be embedded into of $G$.  Thus, $G'$ is \iplIIns, as $G$ is.

Now, we will verify that $G'$ is \mmII.  Consider $G'$ \textit{minus and edge}.  
As $e'$ can only be embedded into the equivalent faces that $e$ can, we see that $G'$ \textit{minus an edge} fails to be \iplII ~whenever we delete an edge of $G'$ that is not contained in $e'$, by hypothesis $i$.  
Furthermore, we notice that deleting an edge contained in $e'$ that is not also contained in $K_{2}'$ is equivalent to deleting an edge between an endpoint of $e$ and an adjacent subdivision of $e$ in $G$, and so $G'$ \textit{minus an edge} fails to be \iplII ~whenever we delete one of these two edges, also by hypothesis $i$.  Finally, $G'$ \textit{minus the edge contained in} $K_{2}'$ is the same graph as described in hypothesis $iii$, so in general, $G'$ \textit{minus an edge} fails to be \iplIIns.  
Thus, deleting any vertex of $G'$ that is connected to an edge fails to be \iplIIns.  Also, since $e'$ can only be embedded into the equivalent faces that $e$ can, $G'$ \textit{minus a disjoint vertex (if $G'$ has one)} fails to be \iplII ~in the same way that $G$ would fail, by hypothesis $i$.  So, in general, $G'$ \textit{minus a vertex} fails to be \iplIIns.  Lastly, consider $G'$ \textit{with a contracted edge}.  
Similar to above, as $e'$ can only be embedded into the equivalent faces that $e$ can, we see that $G'$ \textit{with a contracted edge} fails to be \iplII ~whenever we contract an edge of $G'$ that is not contained in $e'$, by hypothesis $i$.  Furthermore, we notice that contracting an edge contained in $e'$ that is not also contained in $K_{2}'$ is the same graph as described in hypothesis $iv$, so $G'$ \textit{with a contracted edge} fails to be \iplII ~whenever we contract one of these two edges.  Lastly, $G'$ \textit{with the edge contained in} $K_{2}'$ \textit{contracted} is a minor of $G$, and so in general, $G'$ \textit{with a contracted edge} fails to be \iplIIns, by hypothesis $i$.  Thus, no minor of $G'$ can be \iplIIns, so $G'$ is therefore \mmII. 
\end{proof}


\begin{figure}[H]
\centering
\includegraphics[width=0.9\linewidth]{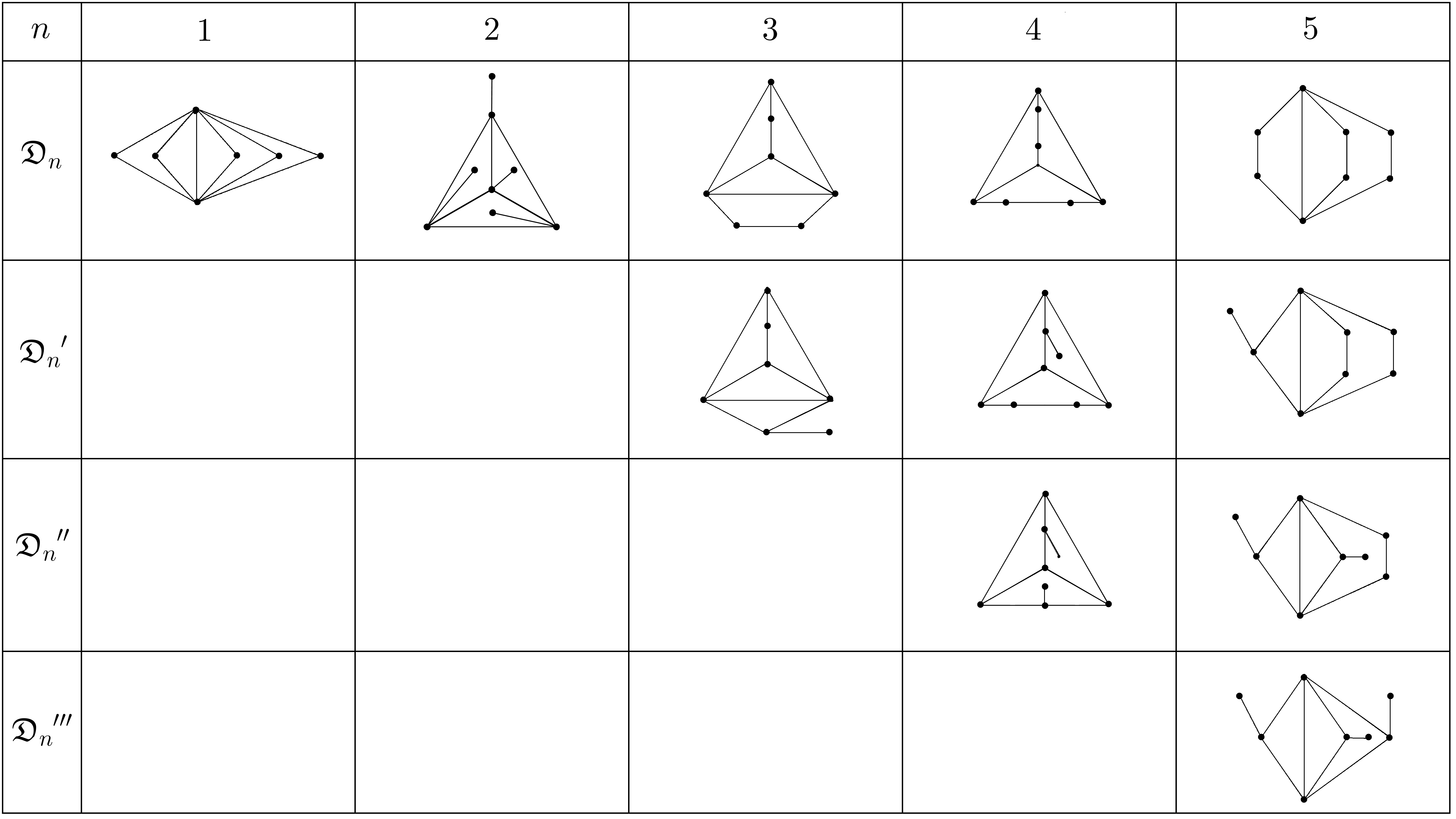}
\caption{[Top Row]  Embeddings of our basic minor-minimal intrinsically type II spherical 3-linked graphs (with the exceptions of \Dons and \Dtw).  [Lower Rows]  Embeddings of our basic graphs with Sub-Dangle moves applied to them.}
\label{fig:Handout1}
\end{figure}

\vskip -.35in

\begin{figure}[H]
\centering
\includegraphics[width=0.9\linewidth]{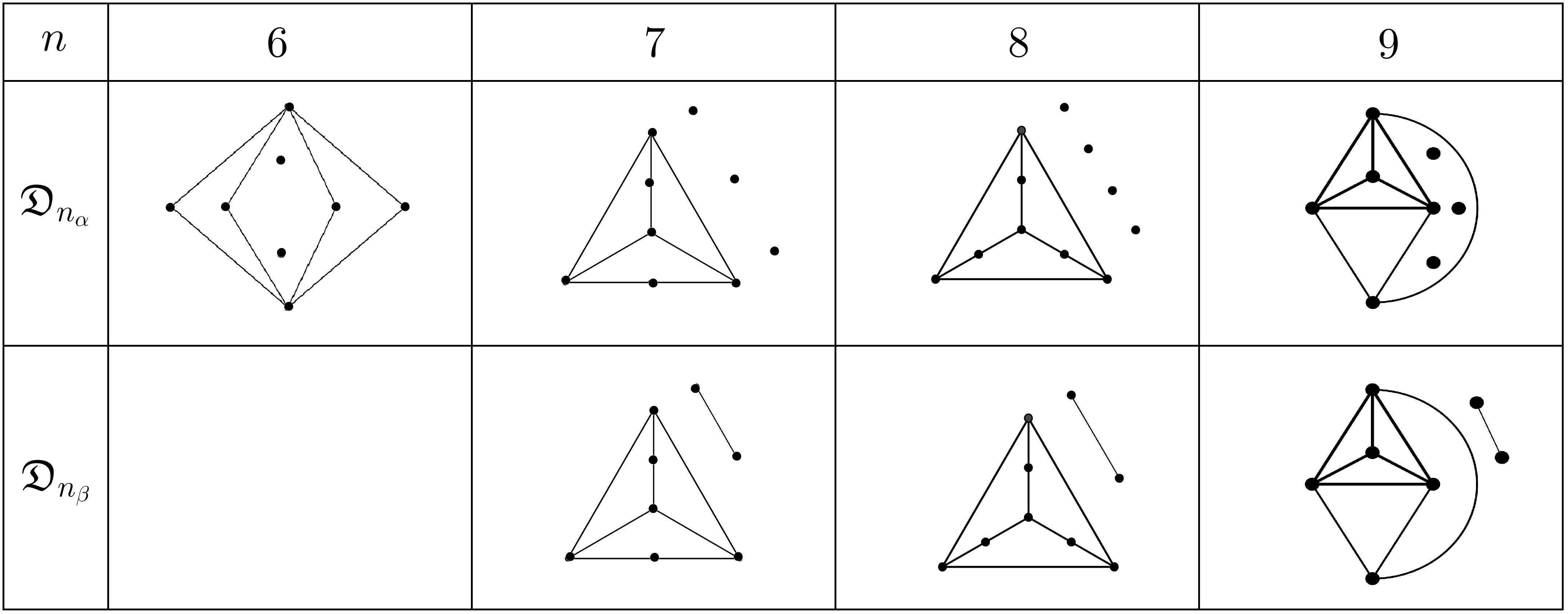}
\label{fig:Handout2}
\end{figure}

\begin{figure}[H]
\centering
\includegraphics[width=0.9\linewidth]{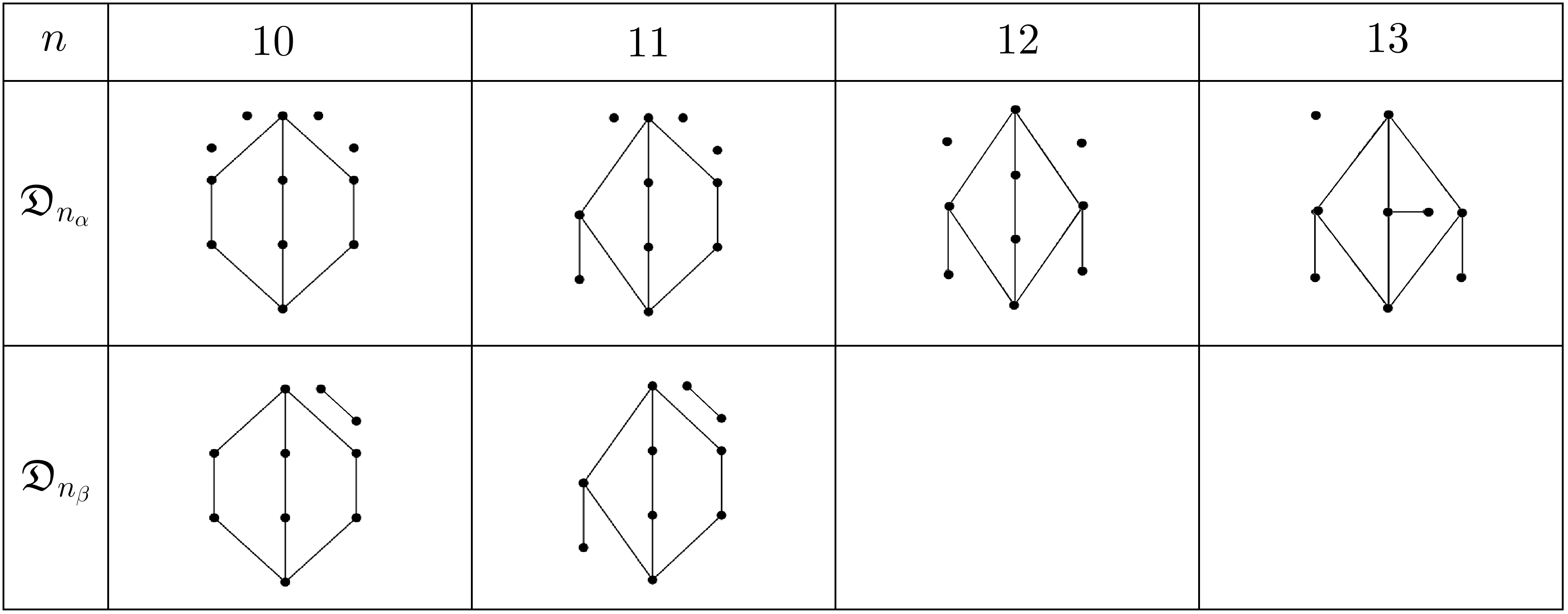}
\caption{[Top Rows]  Embeddings of our alpha versions of our minor-minimal intrinsically type II spherical 3-linked graphs (with the exceptions of \Dsi, \Dtwe, and \Dthi.)  [Bottom Rows]  Embeddings of our beta versions of our minor-minimal intrinsically type II spherical 3-linked graphs.}
\label{fig:Handout3}
\end{figure}

\begin{proposition}
The graphs \Don, \Dtw, \Dth, \Dfo, \Dfi, \Dsi, \Dsea, \Deia, \Dnia, \Dtena, \Deleb, \Dtwe, and \Dthis are \mmII.
\end{proposition}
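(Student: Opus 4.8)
The plan is to verify the claim one graph at a time by exploiting the structural machinery already established, rather than re-deriving minor-minimality from scratch for each $\mathfrak{D}_i$. First I would organize the thirteen graphs into the three groups suggested by Figures \ref{fig:Handout1} and \ref{fig:Handout3}: the \emph{basic} graphs (those that are dangle-able and contain no dangles), the graphs obtained from basic ones by a Sub-Dangle move, and the $\alpha$/$\beta$ pairs obtained by a Vertices-Bar exchange. For each graph in the second group I would check the five hypotheses of Proposition 3.4 (the Sub-Dangle proposition) for the underlying basic graph $G$ — namely that $G$ is \mmII, connected planar, has an edge $e$ carrying two subdivisions $s_1,s_2$, and that the two specified contractions fail to be \iplIIns\ — and then invoke that proposition to conclude $G' = \mathfrak{D}_i$ is \mmII. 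Analogously, for each graph in the third group I would verify the five hypotheses of Proposition 3.3 (the Vertices-Bar proposition) for the relevant $G_0 \dU \{v_1,\dots,v_n\}$ and conclude that $G_0 \dU K_2$ is \mmII.

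This reduces the whole statement to two tasks: (1) establish directly that each \emph{basic} graph on the list (including \Dons\ and \Dtws, which the caption of Figure \ref{fig:Handout1} flags as exceptions, presumably meaning they are not pictured there but are still basic) is \mmII; and (2) discharge the numerical hypotheses ($iii$, $iv$, $v$ of Prop. 3.4 and $iii$, $iv$, $v$ of Prop. 3.3) that are not themselves about minor-minimality. For task (1), the argument for each basic graph follows the template already executed in Proposition 3.1: first fix a spherical embedding and argue it is essentially unique (or enumerate the finitely many inequivalent embeddings), then exhibit in each the required type II non-split 3-link — one cycle separating the two prescribed $S^0$'s from each other — so the graph is \iplIIns; then run through the equivalence classes of edges and, for one representative per class, display a spherical embedding of the edge-deleted (resp. edge-contracted) graph that admits no such link, appealing to vertex-deletion being dominated by edge-deletion since each vertex meets an edge. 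The non-split-ness of the exhibited links can be certified by a linking-number or parity computation, or by pointing to the relevant minor from Theorem 2.1 that forces separation of the $S^0$'s from the $S^1$.

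For task (2), the hypotheses $iv$ and $v$ of the Vertices-Bar proposition, and $iv$ and $v$ of the Sub-Dangle proposition, each assert the \emph{existence} of a single bad face in \emph{every} spherical embedding of a specified minor; these are finite face-by-face checks on the finitely many inequivalent embeddings of a few small planar graphs, and I would present them as short case analyses keyed to the figures. Hypothesis $iii$ of each proposition is immediate from the description of the graph (it just names an edge with the right subdivision data or a disjoint-vertex count $\geq 3$), and hypotheses $i$ and $ii$ for the hypotheses' input graphs are either the basic-graph results from task (1) or are trivial planarity/connectedness observations.

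\textbf{Main obstacle.} The genuine work — and the step I expect to be most delicate — is task (1): proving minor-minimality of each basic graph by hand. The subtlety is not exhibiting the links (the figures do that) but \emph{certifying non-split-ness} rigorously and, more importantly, proving the \emph{negative} direction, i.e. that every edge-deleted or edge-contracted minor of a basic graph has \emph{some} spherical embedding with \emph{no} type II non-split 3-link at all. Because "type II spherical 3-linked" depends on a choice of which points play the role of the two $S^0$'s, ruling out such a link means a quantifier over all cycles and all pairs of disjoint vertex-pairs in every embedding, which can be combinatorially heavy; the cleanest route is probably to reduce each such minor, via Theorem 2.1 and Theorem \ref{thm:nonouterplanar}, to a graph that provably lacks the relevant separating structure (e.g. one whose blocks are outerplanar or too small to host a cycle plus two independent $S^0$'s), and to package that reduction as a lemma reused across all thirteen cases. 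Handling the listed exceptions \Dons, \Dtws\ (and the $\mathfrak{D}_6,\mathfrak{D}_{12},\mathfrak{D}_{13}$ exceptions noted for Figure \ref{fig:Handout3}) will require separate, self-contained embedding analyses since the general figures do not cover them.
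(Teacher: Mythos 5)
Your ``task (1)'' --- enumerate the finitely many spherical embeddings of each graph up to equivalence via symmetry, exhibit a type II non-split 3-link in each, then run through equivalence classes of edges and show each deletion/contraction admits a link-free embedding (with vertex deletion subsumed by edge deletion for the connected pieces) --- is exactly the paper's proof, which carries out this direct case analysis for all thirteen graphs and certifies both the positive and negative directions by the figures. You also correctly isolate the real difficulty: the negative direction, i.e.\ producing for every minor an embedding with no type II link at all.

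The structural framing around that core, however, contains a genuine error: none of the thirteen graphs in this proposition can be obtained by invoking Proposition 3.3 or 3.4, so your ``second group'' and ``third group'' are empty here, and attempting to populate them is circular. The $\alpha$ graphs $\mathfrak{D}_{7_\alpha},\dots,\mathfrak{D}_{11_\alpha}$ are the \emph{inputs} to the Vertices-Bar exchange, not its outputs: hypothesis $i$ of Proposition 3.3 demands that $G_0 \dU \{v_1,\dots,v_n\}$ already be \mmII, which is precisely what this proposition must establish, and the conclusion of that proposition concerns $G_0 \dU K_2$, i.e.\ the $\beta$ graphs, which (modulo the apparent typo $\mathfrak{D}_{11_\beta}$ for $\mathfrak{D}_{11_\alpha}$ in the statement) appear only in the subsequent corollary. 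The same holds for the Sub-Dangle move: $\mathfrak{D}_3,\mathfrak{D}_4,\mathfrak{D}_5$ are the basic inputs, and the primed graphs it produces are likewise deferred to the corollary. So your plan, executed honestly, collapses entirely onto task (1) for all thirteen graphs; the reduction machinery buys you nothing until the corollary. Once that is recognized, what remains of your proposal is sound and coincides with the paper's argument.
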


 \noindent
 \textit{Proof.} \space  First, notice that the graphs $\mathfrak{D}_i$ for $i \in \mathbb{N}$ with $ 1 \leq i \leq 5$ are connected, so it suffices to show that deleting or contracting any edge from these graphs will result in a loss of our graph being intrinsically type II spherical 3-linked, in order to show minor-minimality (since we will not be able to delete a vertex from these graphs without also deleting an edge).  
Similarly, notice that we can characterize the vertices in the graphs $\mathfrak{D}_j$ for $j \in \mathbb{N}$ with $6 \leq j \leq 13$ as being incident to an edge or not.  
With this characterization, it suffices to show that deleting or contracting any edge from these graphs, or deleting vertex component from these graphs, will result in a loss of our graph being intrinsically type II spherical 3-linked, in order to show minor-minimality.

First, we embed $K_4$ into \Sph.  This embedding is unique, up to equivalence, as $K_4$ is 3-connected.  
Additionally, this embedding determines four faces, each of which are equivalent, and we also note that each edge and vertex in $K_4$ are pairwise equivalent.  
In regards to Figure \ref{fig:D2}, we see that by identifying just one vertex of a copy of $K_2$ to each vertex of $K_4$, we are forced to embed the other vertex of each copy of $K_2$ into a face of $K_4$, although at most three remaining vertices can be embedded into one face of $K_4$.  
Since the faces of $K_4$ are all equivalent, we embed one vertex of a copy of $K_2$ into any face of $K_4$; call this face $F$.  As each face is adjacent to this face, we are left with only four possible ways to embed the last three vertices:
\begin{enumerate}
\item Embed exactly two of the remaining vertices into $F$, 

\item Embed exactly one of the remaining vertices into $F$, and embed the last two remaining vertices into one distinct face, 

\item Embed exactly one of the remaining vertices into $F$, and embed the last two remaining vertices into two distinct faces.

or

\item Embed exactly one of the remaining vertices into each of the remaining faces.
\end{enumerate} 

\noindent Thus, the graph \Dtw ~has only four distinct embeddings into \Sph, up to equivalence. 
\noindent 
We note that in a similar way, $K_4$ \textit{with one subdivision on two nonadjacent edges} and $K_4$ \textit{with one subdivision on three pairwise adjacent edges} have a unique embedding into \Sph, which determines four equivalent faces.  

In regards to the graph \Dsea, as in Figure \ref{fig:D7a}, we see that after embedding one vertex into a face $F_1$ of $K_4$ \textit{with one subdivision on two nonadjacent edges}, we are only left with the options (1), (3) (minus a vertex), and (4).  

\noindent In regards to the graph \Deia, as in Figure \ref{fig:D8a}, we see that after embedding one vertex into a face $F_2$ of $K_4$ \textit{with one subdivision on three pairwise adjacent edges}, we have all of the four options above to place our remaining vertices, as well as a fifth option to place all three of the remaining vertices into $F_2$.

Now, regarding the graph \Dth, as in Figure \ref{fig:D3}, we see that subdividing an edge of $K_4$ yields two equivalence classes of faces; those that are bounded by an edge with a subdivision, and those that are not.  So, when we identify the two endpoints of a copy of $K_2$ \textit{with two subdivisions} to the two vertices of $K_4$ \textit{with one subdivision} that are not incident to an edge with a subdivision, we see that we are forced to embed this copy of $K_2$ \textit{with two subdivisions} into one of  two equivalent faces of $K_4$ \textit{with a subdivision}; a face that is not bounded by an edge with a subdivision.  Thus, \Dths has a unique embedding into \Sph, up to equivalence.  

Hence, as can be seen in Figures \ref{fig:D2}, \ref{fig:D3}, \ref{fig:D7a}, and \ref{fig:D8a}, \Dtw, \Dth, \Dsea, and \Deias are \iplII.

Now, consider the graph \Dsi. First, we embed $K_{4,2}$ into \Sph.  Because of the symmetry of $K_{4,2}$, this embedding is unique, up to equivalence.  Also, this embedding determines four faces, each of which are equivalent.  In regards to Figure \ref{fig:D6}, we see that there are only three ways to embed two disjoint vertices into faces of $K_{4,2}$ in \Sph; embedding both vertices into one face, embedding one vertex into two adjacent faces, or embedding one vertex into two nonadjacent face.  Thus, there are three distinct ways to embed \Dsis into \Sph.  Hence, as seen in Figure \ref{fig:D6}, \Dsis is \iplII.  

Consider the graph \Dnia.  We embed $K_5 - e$ into \Sph.  This embedding is unique, up to equivalence, as $K_5 - e$ is 3-connected, and determines six faces which are equivalent.  
This leaves us with three options; embed all three vertex components into one face, embed exactly two vertex components into one face, or embed exactly one vertex component into any face.  
Moreover, when we embed exactly two vertex components into one face of $K_5 - e$, we have sub-options to embed the last vertex component into into one of two adjacent faces (these two faces are distinct since two vertices have been embedded into one particular face), or into a nonadjacent face.  Furthermore, when we embed exactly one vertex into any face, we have sub-options to have one vertex in three pairwise adjacent faces, one vertex in three non-pairwise adjacent faces, or one vertex in two adjacent faces and one vertex into a pairwise nonadjacent face.  Additionally, when we embed one vertex into three non-pairwise adjacent faces, we have two sub-sub-options when we choose how to place the three vertices.  Thus, \Dnias has a total of eight distinct embeddings into \Sph.  
Hence, as seen in Figure \ref{fig:D9a}, \Dnias is \iplII.

Next, consider the graph \Dtena.  We embed $K_{3,2}$ \textit{with one subdivision on three pairwise adjacent edges} into \Sph.  Because of the symmetry of $K_{3,2}$ \textit{with one subdivision on three pairwise adjacent edges}, this embedding is unique, up to equivalence, and determines three equivalent faces.  In regards to Figure \ref{fig:D10a}, we see that there are only four ways to embed our remaining four disjoint vertices; embed exactly four vertices into one face, embed exactly three vertices into one face, embed exactly two vertices into any face, or embed exactly two vertices into exactly one face.  Thus, \Dtenas has four distinct embeddings into \Sph.  Hence, by Figure \ref{fig:D10a}, \Dtenas is \iplII.

Now, consider the graph \Delea.  In regards to Figure \ref{fig:D11a}, we see that \Delea \textit{minus three disjoint vertices} has a unique embedding into \Sph, as the only vertex left that is not contained by $K_{3,2}$ can only be embedded into one of two equivalent faces of \Deleas \textit{minus three disjoint vertices}.  So, we embed this particular vertex into any one of these two possible faces; call this face $H$.  Now, we notice that whenever we embed another vertex into $H$, the resultant embedding is type II spherical 3-linked.  Similarly, whenever we embed any two vertices into one face, we have that the resultant embedding is type II spherical 3-linked.  Hence, as there are only three faces in \Deleas \textit{minus three disjoint vertices}, we have that \Deleas is \iplII.

Continuing, consider the graph \Dtwe.  In regards to Figure \ref{fig:D12}, we see that \Dtwes \textit{minus two disjoint vertices} has two distinct embeddings into \Sph, as the only two vertices left that are not contained by $K_{3,2}$ can be embedded into a common face, or not.  Notice, when \Dtwes \textit{minus two disjoint vertices} is embedded with these two vertices in a common face, the resultant embedding is type II spherical 3-linked regardless of where we embed the two disjoint vertices.  Furthermore, when we embed \Dtwes \textit{minus two disjoint vertices} with these two vertices in adjacent faces, we have that whenever we embed one vertex into one of these two faces, or whenever we embed both disjoint vertices into a common face, that the resultant embedding will be type II spherical 3-linked.  Thus, we have that \Dtwes is \iplII.

Lastly, consider the graph \Dthi.  In regards to Figure \ref{fig:D13}, we see that \Dthis \textit{minus one disjoint vertex} has only two distinct embeddings into \Sph, up to equivalence, one of which is type II spherical 3-linked no matter where another vertex is added.  Moreover, observing Figure \ref{fig:D13}, we see that no matter where we place a vertex in any embedding of \Dthis \textit{minus one disjoint vertex} results in an embedding that is type II spherical 3-linked.  Thus, \Dthis is \iplII.

Finally, for each $1 \leq n \leq 6$ and each $7 \leq m \leq 13$, we note that the graphs $D_n$ and $D_{m_\alpha}$ are minor-minimal with respect to being intrinsically type II spherical 3-linked, as can be seen in Figures $n$B and $m$B, respectively.

Our proof of Proposition 3.5 is now complete.  
\begin{flushright}
$\square$
\end{flushright}

\begin{corollary}
The graphs \Dth$'$, \Dfo$'$, \Dfo$''$, \Dfi$'$, \Dfi$''$, \Dfi$'''$, \Dseb, \Deib, \Dnib, \Dtenb, and \Delebs are \mmII.
\end{corollary}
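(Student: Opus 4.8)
The plan is to deduce the corollary directly from the Subdivisions-Dangle move (Proposition 3.4) together with Proposition 3.5. The key observation is that every graph named in the corollary is obtained from a graph already shown to be \mmII in Proposition 3.5 by one or more applications of the Subdivisions-Dangle move: for the primed graphs the base graph is $\mathfrak{D}_3$, $\mathfrak{D}_4$, or $\mathfrak{D}_5$, with one prime recording one application, two primes two applications, and three primes three applications, as pictured in the lower rows of Figure \ref{fig:Handout1}; for the beta-versions $\mathfrak{D}_{7_{\beta}},\dots,\mathfrak{D}_{11_{\beta}}$ the base graph is the corresponding basic graph (or its alpha-version), and the move is the one producing the dangle shown in the bottom rows of Figure \ref{fig:Handout3}. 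So the first step is, for each target graph $H$ in the list, to record the base graph $G$, the edge $e$, and its two subdivisions $s_1,s_2$ witnessing $H$ as the graph $G'$ of Proposition 3.4.

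With this correspondence fixed, the argument reduces to checking, graph by graph, the five hypotheses of Proposition 3.4 for the relevant $G$. Hypotheses (i) and (ii) come for free: (i) is exactly the conclusion of Proposition 3.5, and (ii) holds since each $\mathfrak{D}_i$ is a connected planar graph. Hypothesis (iii) --- that $G$ has an edge carrying two subdivisions --- is read directly off the embedding pictures, since each such $G$ was built to contain precisely this feature (for instance the $K_2$ with two subdivisions inside $\mathfrak{D}_3$). The real content is hypotheses (iv) and (v): one must verify that $G \dU \{v\}$ with the edge between $s_2$ and the adjacent endpoint of $e$ contracted is not \iplIIns, and that $G$ with that same contraction, together with a $K_2$ identified at an endpoint of $e$, is not \iplIIns. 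As in the proof of Proposition 3.5, these are established by enumerating the spherical embeddings of each of these two minors --- up to equivalence there are only a few, since the contraction collapses $s_2$ and leaves only a small number of inequivalent placements for the free vertex or vertices --- and displaying, for each, an embedding with no non-split type II 3-link; these are recorded in the corresponding ``B'' figures.

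For the multiply-primed graphs $\mathfrak{D}_4''$, $\mathfrak{D}_5''$, and $\mathfrak{D}_5'''$ the move is applied iteratively: after each step the output is already \mmII by the single-move case, remains connected and planar, and still carries an edge with two subdivisions (the basic graphs $\mathfrak{D}_4$ and $\mathfrak{D}_5$ have several equivalent such edges), so hypotheses (i)--(iii) persist and one re-checks only (iv) and (v) before the next move. Granting all these verifications, Proposition 3.4 applies to each graph in the list in turn and the corollary follows. The step I expect to be the main obstacle is precisely the embedding enumeration demanded by hypotheses (iv) and (v): there is no uniform shortcut, and for each of the eleven graphs (and each intermediate graph in the iterated cases) one must argue from the figures that a specific contracted minor admits a spherical embedding with no non-split type II 3-link --- a voluminous and error-prone case analysis. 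Everything else is bookkeeping that follows the template already established in Proposition 3.5.
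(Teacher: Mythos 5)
Your handling of the primed graphs $\mathfrak{D}_3'$, $\mathfrak{D}_4'$, $\mathfrak{D}_4''$, $\mathfrak{D}_5'$, $\mathfrak{D}_5''$, $\mathfrak{D}_5'''$ follows the paper: each is produced from \Dth, \Dfo, or \Dfi\ by (iterated) Subdivisions-Dangle moves, with the substantive work being the figure-by-figure verification of the later hypotheses at each stage. The gap is in your treatment of the beta graphs. You assert that \Dseb, \Deib, \Dnib, \Dtenb, and \Delebs are also obtained by the Subdivisions-Dangle move, but they are in fact obtained from their alpha versions by the \emph{Vertices-Bar exchange} (the proposition about $G = G_0 \dU \{v_1,\dots,v_n\}$ passing to $G_0 \dU K_2$), a different operation with different hypotheses. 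The alpha graphs are disjoint unions of a connected planar graph with isolated vertices (for instance \Dnia\ is $(K_5 - e) \dU 3v$), and the beta version replaces those isolated vertices with a disjoint $K_2$ component (a ``bar''). A Sub-Dangle move cannot produce such a graph: its output has a pendant $K_2$ identified with a subdivision vertex of an edge carrying two subdivisions, and its hypothesis (iii) --- the existence of an edge with two subdivisions $s_1, s_2$ --- is not available in the required form for graphs like $(K_5-e) \dU 3v$. So the reduction you propose simply does not apply to five of the eleven graphs in the list.

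The repair is to check, for each of \Dsea, \Deia, \Dnia, \Dtena, \Delea, the five hypotheses of the Vertices-Bar proposition rather than those of the Sub-Dangle proposition: (i) is supplied by Proposition 3.5; (ii) is the decomposition into a connected planar $G_0$ together with at least three isolated vertices; (iii) requires that $G_0 \dU \{v_1,v_2\}$ is type II spherical 3-linked whenever both vertices are embedded in a common face of any embedding of $G_0$; and (iv) and (v) require exhibiting, for every embedding of $G_0 - e$ and of $G_0 \backslash e$, a face into which all the isolated vertices can be placed without creating a type II 3-link. These are read off Figures \ref{fig:D7a}--\ref{fig:D11a} and \ref{fig:D7aminors}--\ref{fig:D11aminors}, and they are genuinely different verifications from the contraction-and-dangle checks you describe.
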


 \noindent
 \textit{Proof.} \space Recall, the Sub-Dangle and Vert-Bar moves preserve minor-minimality of intrinsic type II spherical 3-linkings when certain criteria are met. 

By Proposition 3.5, \Dth, \Dfo, and \Dfi fulfill requirement $i$ of the Sub-Dangle move.  
By examining Figures \ref{fig:D3}, \ref{fig:D4}, and \ref{fig:D5}, we see that each of these graphs also fulfills requirement $ii$.  
Lastly, by examining Figures \ref{fig:D3subdangle}, \ref{fig:D4subdangle}, and \ref{fig:D5subdangle},  we see that each of these graphs fulfill requirements $iii$ and $iv$, as well.  Thus, \Dth$'$, \Dfo$'$, and \Dfi$'$ are \mmII.

Now, we see that \Dfo$'$ and \Dfi$'$ fulfill requirements $i$ and $ii$ of the Sub-Dangle move.  By examining Figures \ref{fig:D4'subdangle} and \ref{fig:D5'subdangle}, we see that these two graphs fulfill requirements $iii$ and $iv$, as well.  Thus, \Dfo$''$ and \Dfi$''$ are \mmII. Similarly, by examining Figure \ref{fig:D5''subdangle}, wee see that \Dfi$'''$ is \mmII.

Continuing, by Proposition 3.5 we have that \Dsea, \Deia, \Dnia, \Dtena, and \Deleas fulfill requirement $i$ of the Vert-Bar move.
By examining Figures \ref{fig:D7a}, \ref{fig:D8a}, \ref{fig:D9a}, \ref{fig:D10a}, and \ref{fig:D11a}, we see that these graphs also fulfill requirements $ii$ and $iii$.  
Finally, by examining Figures \ref{fig:D7aminors}, \ref{fig:D8aminors}, \ref{fig:D9aminors}, \ref{fig:D10aminors}, and \ref{fig:D11aminors}, we see that these graphs fulfill requirements $iv$ and $v$, as well.  Thus, \Dseb, \Deib, \Dnib, \Dtenb, and \Deleb are \mmII.

This completes our proof.  
\begin{flushright}
$\square$
\end{flushright}


%
%
%
%
%
%


\vskip .2in
 
\begin{figure}[H]
\centering
\includegraphics[width=0.35\linewidth, height=0.1\textheight]{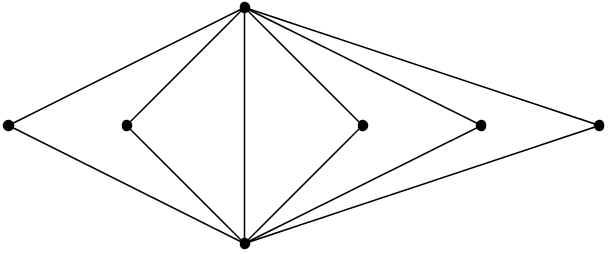}
\caption{\Dons has a unique embedding into \Sph, up to equivalence.}
\label{fig:D1}
\end{figure}

\begin{figure}[H]
\centering
\includegraphics[width=0.7\linewidth, height=0.2\textheight]{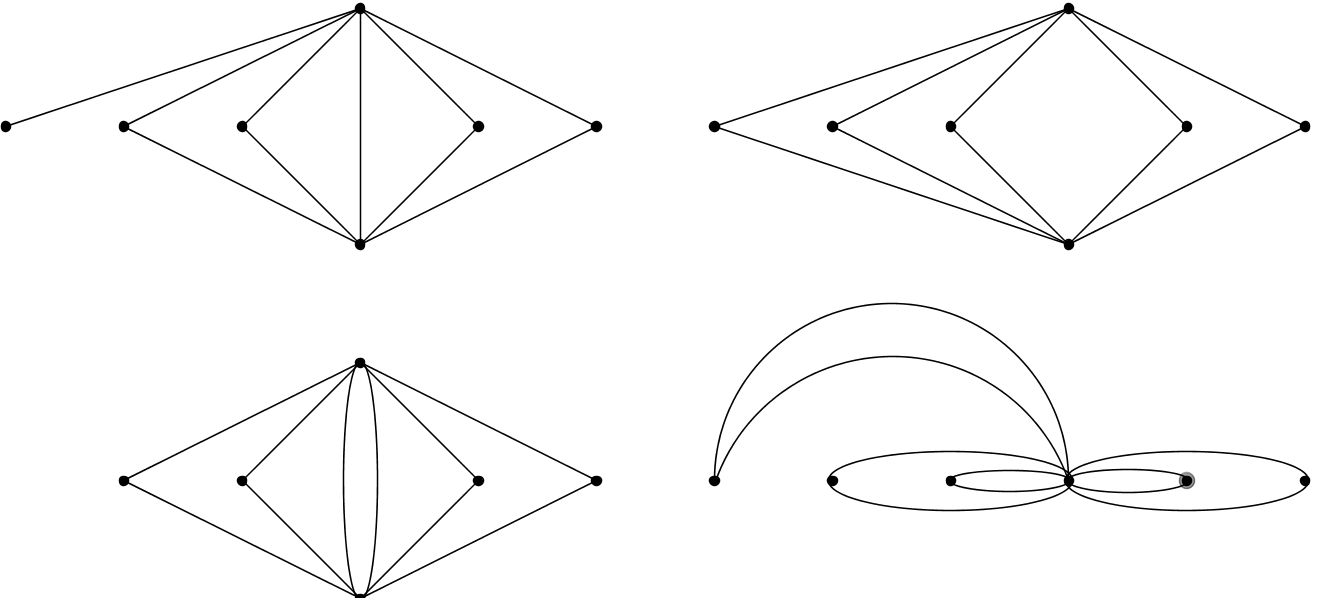}
\caption{\Dons has only two equivalence classes of edges; those that are incident to a vertex of degree 2, and those that are not.}
\label{fig:D1minors}
\end{figure}

\begin{figure}[H]
\centering
\includegraphics[width=0.7\linewidth, height=0.2\textheight]{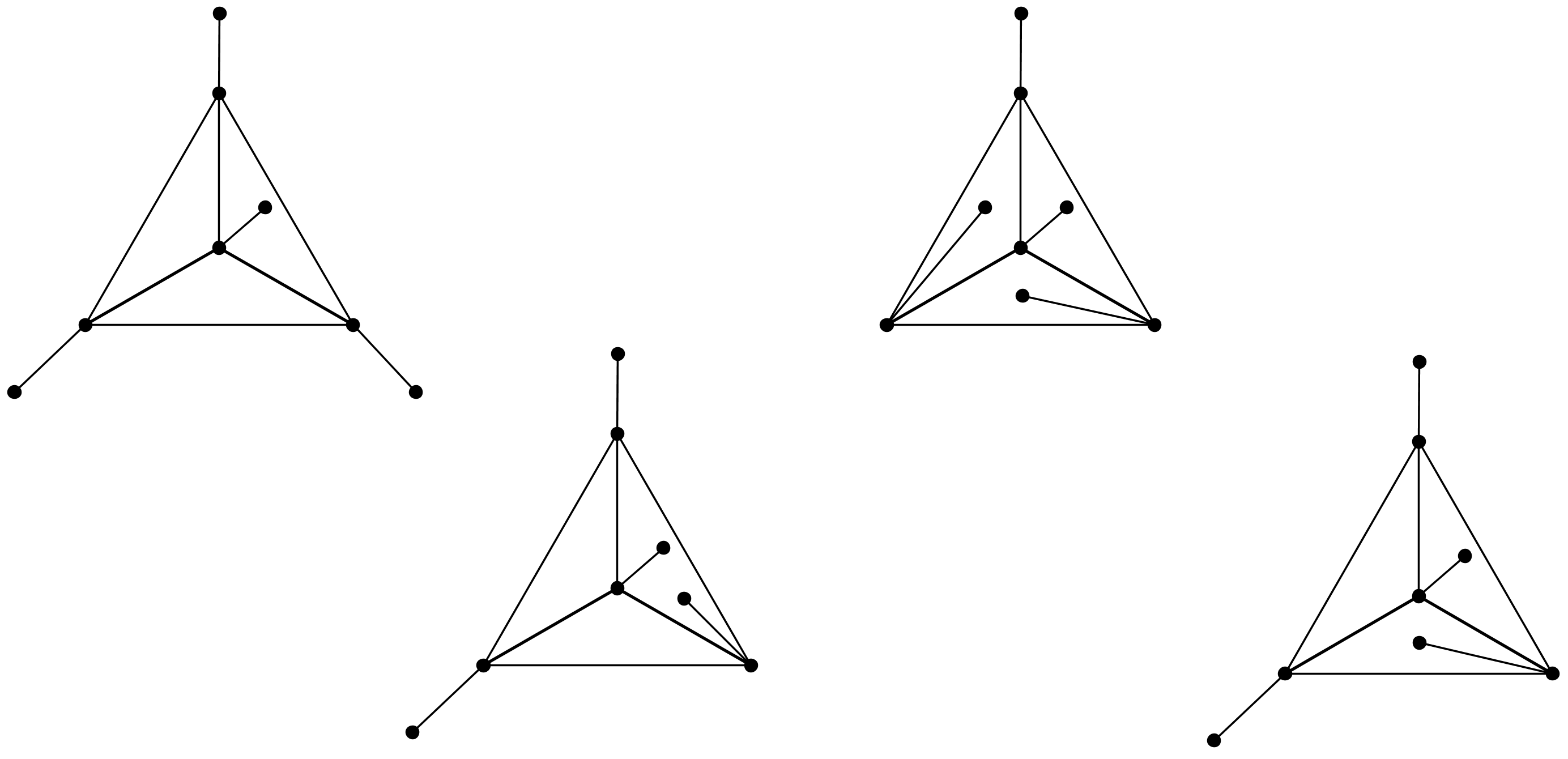}
\caption{\Dtws has four distinct embeddings into \Sph, up to equivalence.}
\label{fig:D2}
\end{figure}

\begin{figure}[H]
\centering
\includegraphics[width=0.4\linewidth, height=0.15\textheight]{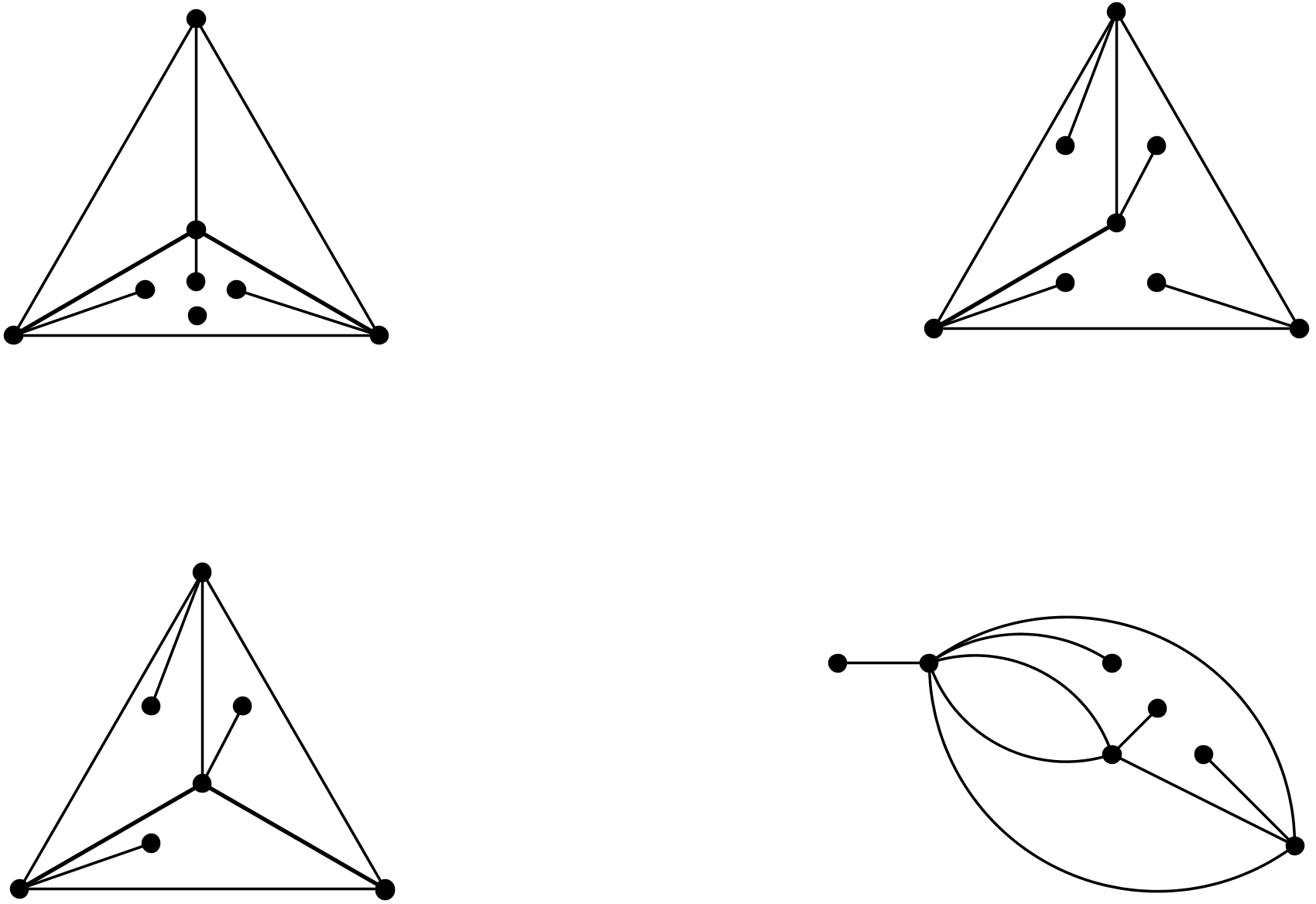}
\caption{\Dtws has only two equivalence classes of edges; those that are contained by $K_4$, and those that are not.}
\label{fig:D2minors}
\end{figure}

\begin{figure}[H]
\centering
\includegraphics[width=0.5\linewidth, height=0.1\textheight]{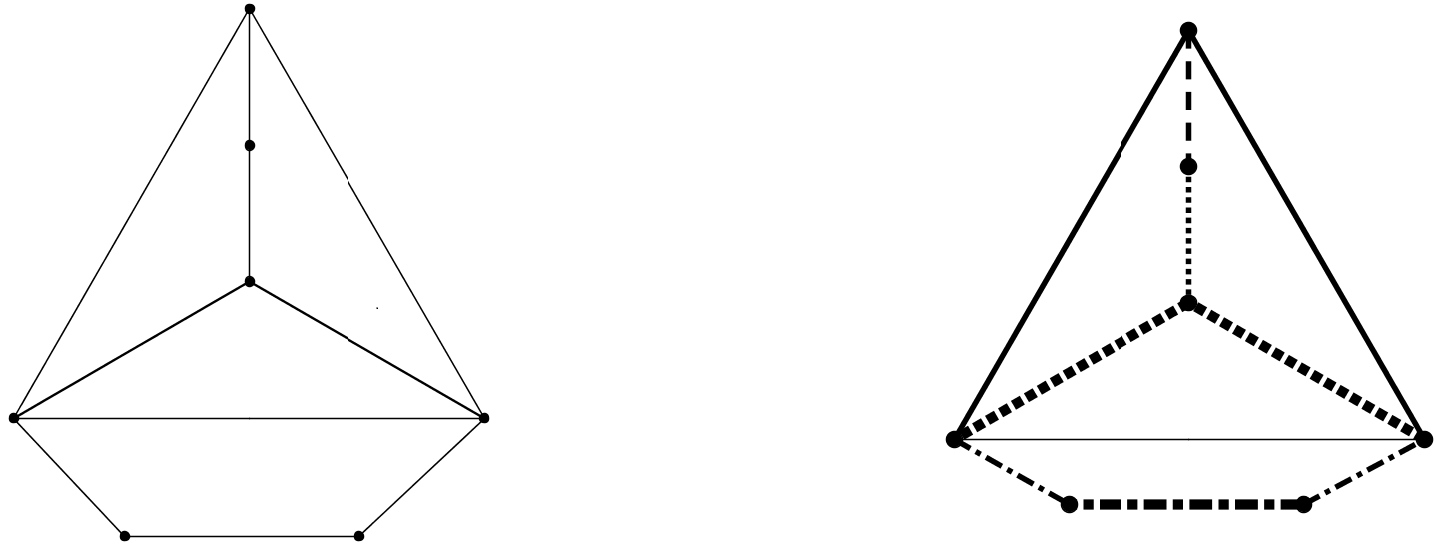}
\caption{\Dths has a unique embedding into \Sph, up to equivalence.}
\label{fig:D3}
\end{figure}

\begin{figure}[H]
\centering
\includegraphics[width=0.8\linewidth, height=0.25\textheight]{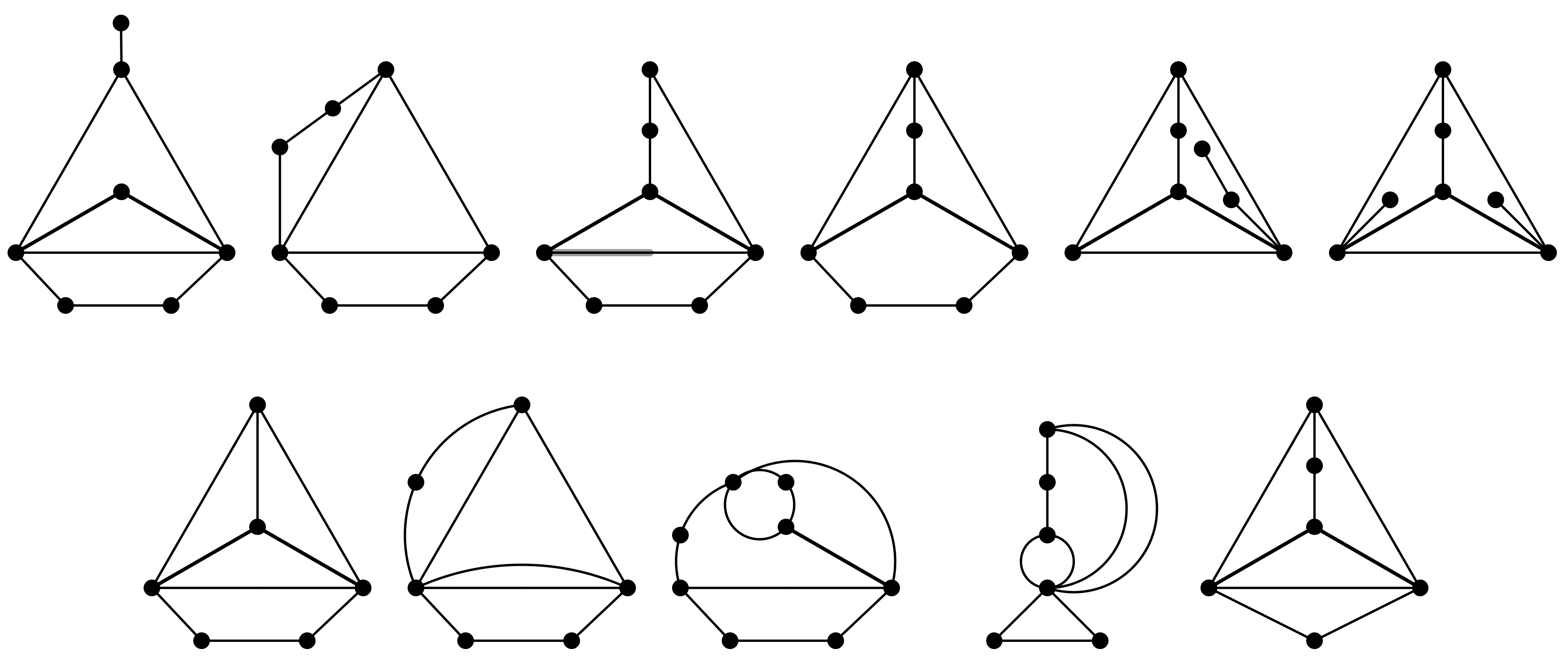}
\vskip .1in
\caption{\Dths has seven equivalence classes of edges, as highlighted in Figure \ref{fig:D3}.}
\label{fig:D3minors}
\end{figure}

\begin{figure}[H]
\centering
\vskip -.25in
\includegraphics[width=0.2\linewidth, height=0.1\textheight]{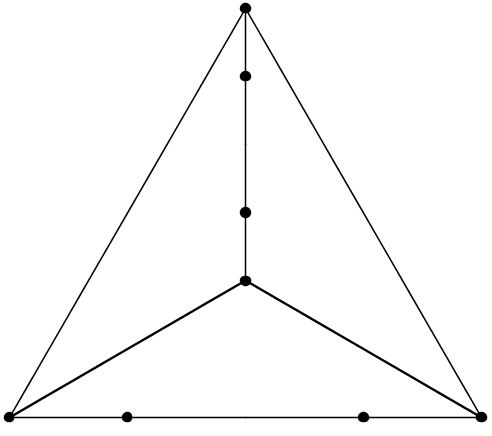}
\vskip .1in
\caption{\Dfos has a unique embedding into \Sph, up to equivalence, since \Dfos is 3-connected (disregarding subdivisions).}
\label{fig:D4}
\end{figure}

\begin{figure}[H]
\centering
\vskip -.2in
\includegraphics[width=0.7\linewidth, height=0.2\textheight]{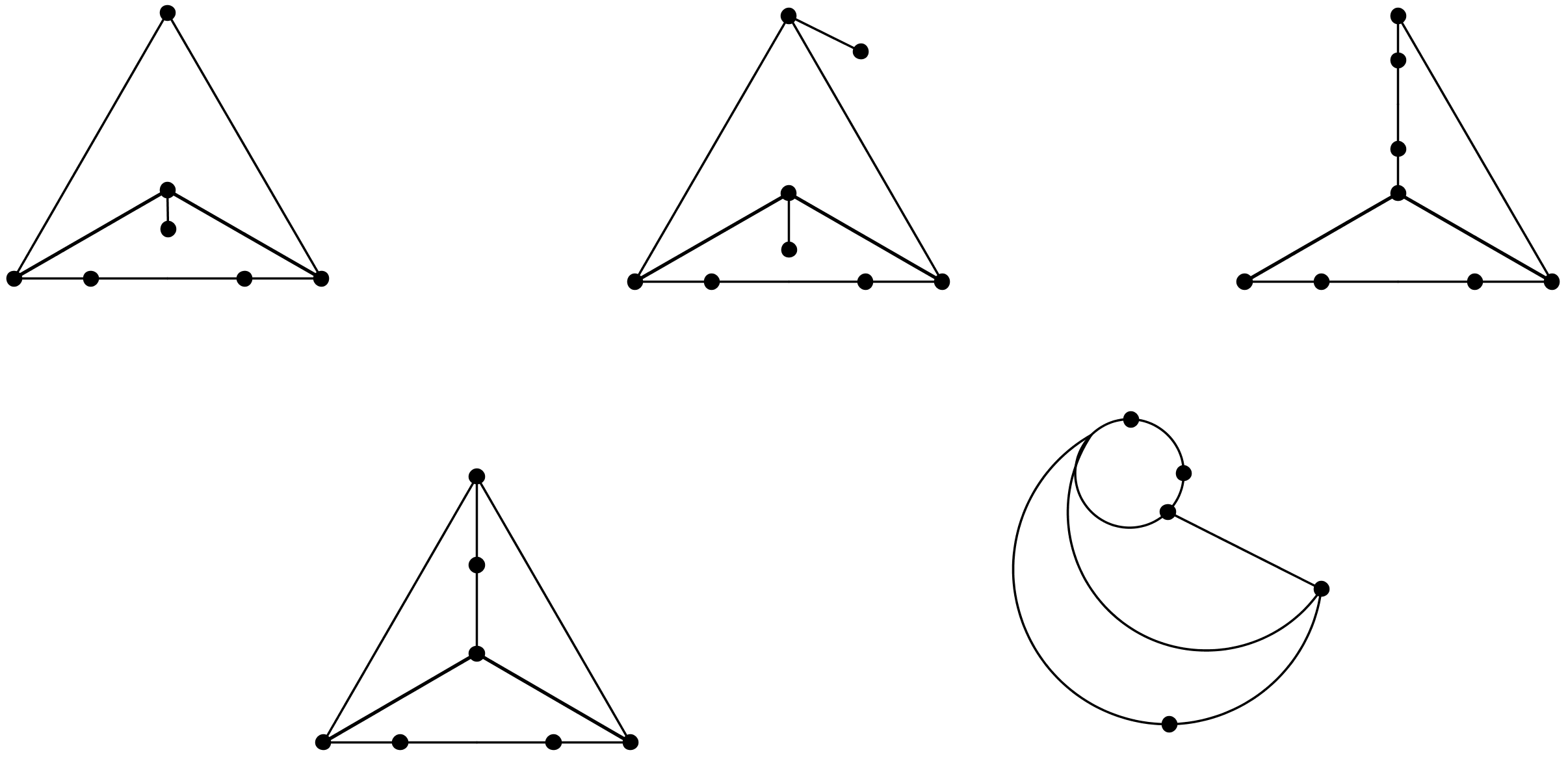}
\vskip .1in
\caption{\Dfos has three equivalence classes of edges; those that are incident to two vertices of degree 2, those that are incident to two vertices of degree 3, and those that are neither.}
\label{fig:D4minors}
\end{figure}

\begin{figure}[H]
\centering
\includegraphics[width=0.2\linewidth, height=0.1\textheight]{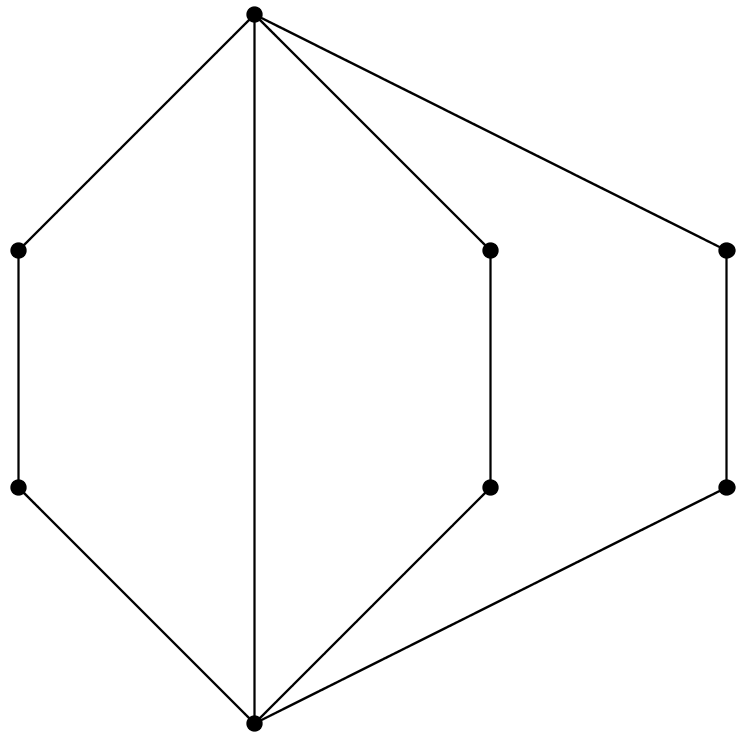}
\caption{\Dfis has a unique embedding into \Sph, up to equivalence, by the symmetery of $K_{3,1,1}$ \textit{with one subdivision on three pairwise adjacent edges}.}
\label{fig:D5}
\end{figure}

\begin{figure}[H]
\centering
\includegraphics[width=0.7\linewidth, height=0.2\textheight]{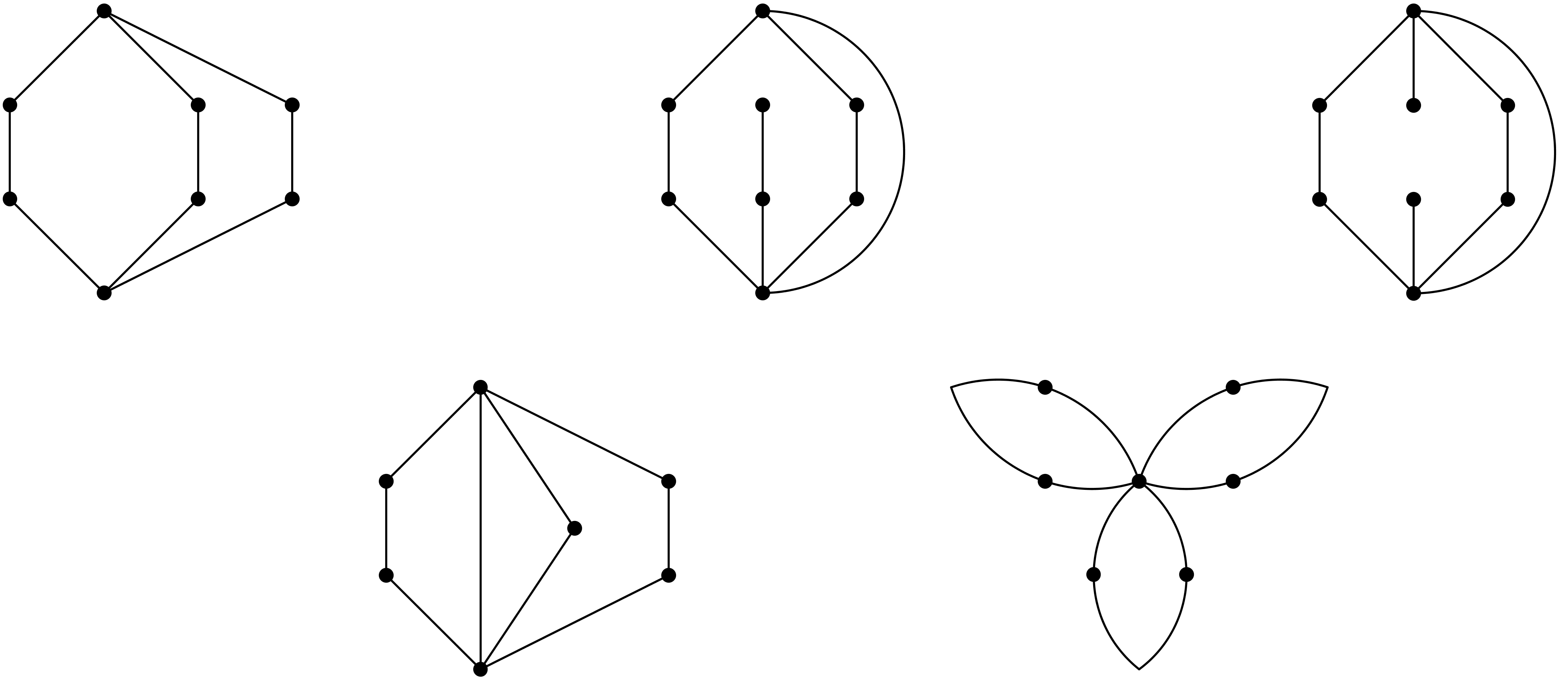}
\caption{\Dfis has three equivalence classes of edges; that that are incident to two vertices of degree 2, those that are incident to two vertices of degree 4, and those that are neither.}
\label{fig:D5minors}
\end{figure}

\begin{figure}[H]
\centering
\includegraphics[width=0.68\linewidth, height=0.1\textheight]{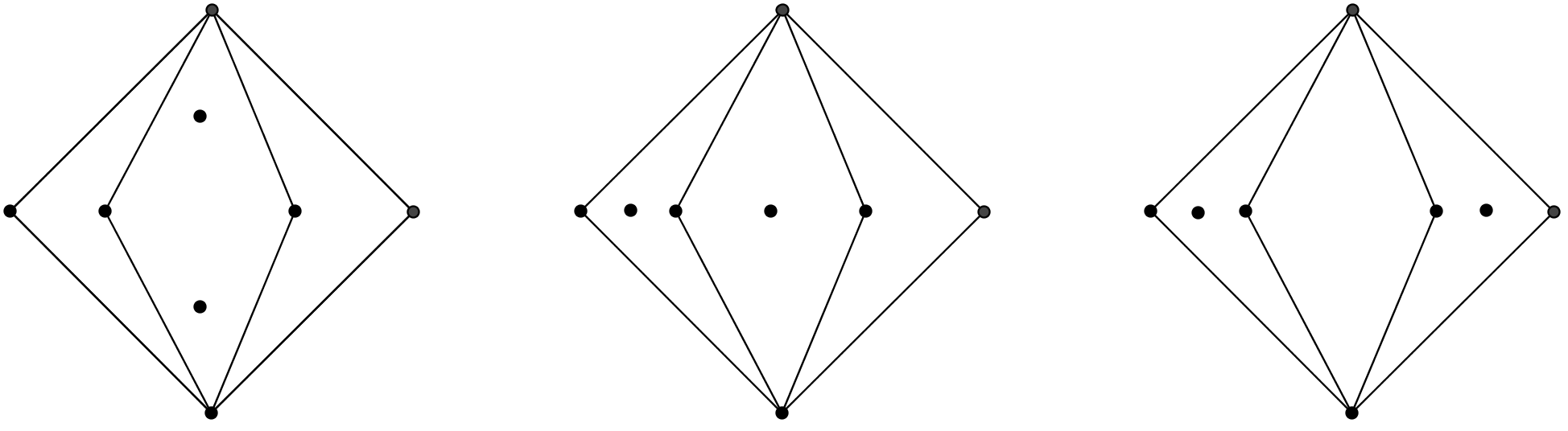}
\caption{\Dsis has three distinct embeddings in \Sph, up to equivalence.}
\label{fig:D6}
\end{figure}

\begin{figure}[H]
\centering
\includegraphics[width=0.68\linewidth, height=0.1\textheight]{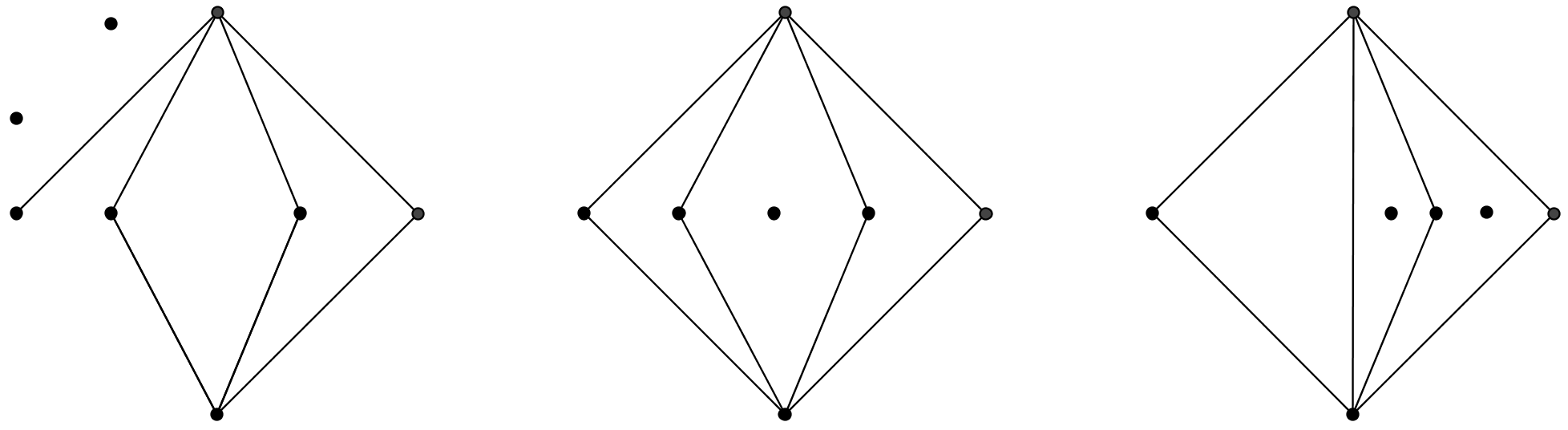}
\caption{Every edge in \Dsis is equivalent, based on the symmetry of $K_{4,2}$.}
\label{fig:D6minors}
\end{figure}

\begin{figure}[H]
\centering
\includegraphics[width=0.7\linewidth, height=0.1\textheight]{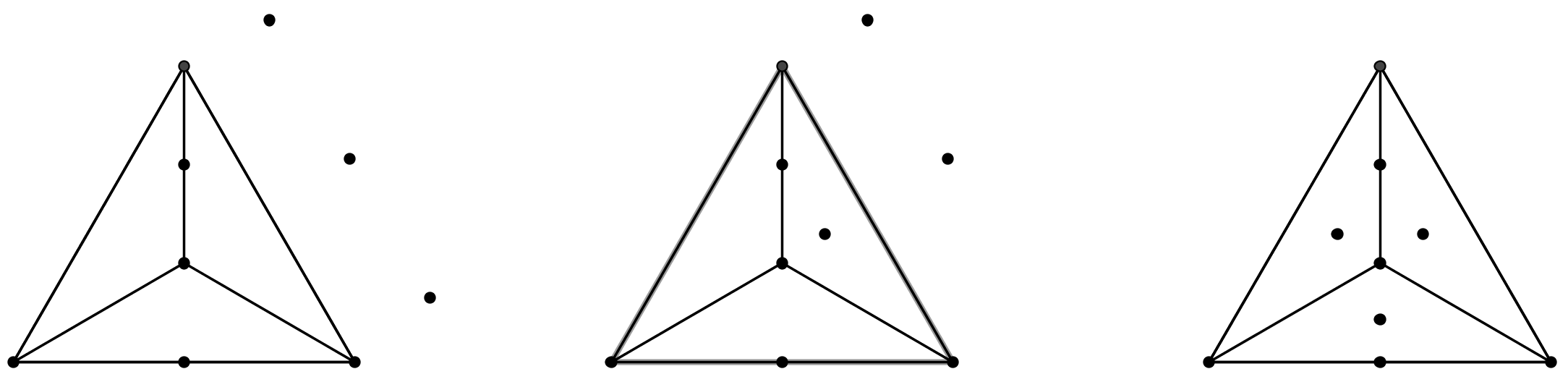}
\caption{\Dseas has three \demb.}
\label{fig:D7a}
\end{figure}

\vskip -.25in

\begin{figure}[H]
\centering
\includegraphics[width=0.7\linewidth, height=0.2\textheight]{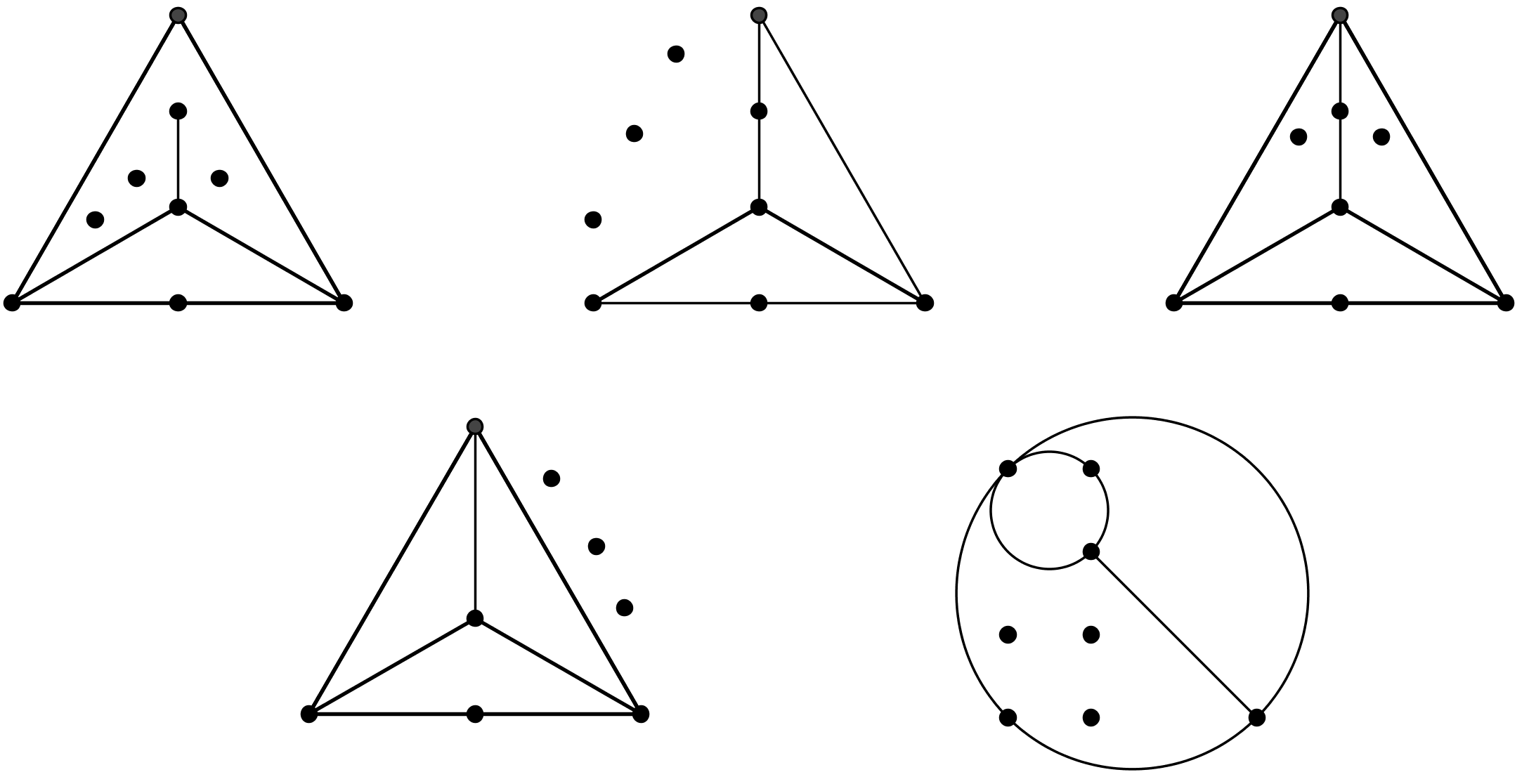}
\caption{\Dseas has only two \equi; those that are incident to two vertices of degree 3, and those that are not.}
\label{fig:D7aminors}
\end{figure}

\vskip -.3in
\begin{figure}[H]
\centering
\includegraphics[width=0.7\linewidth, height=0.2\textheight]{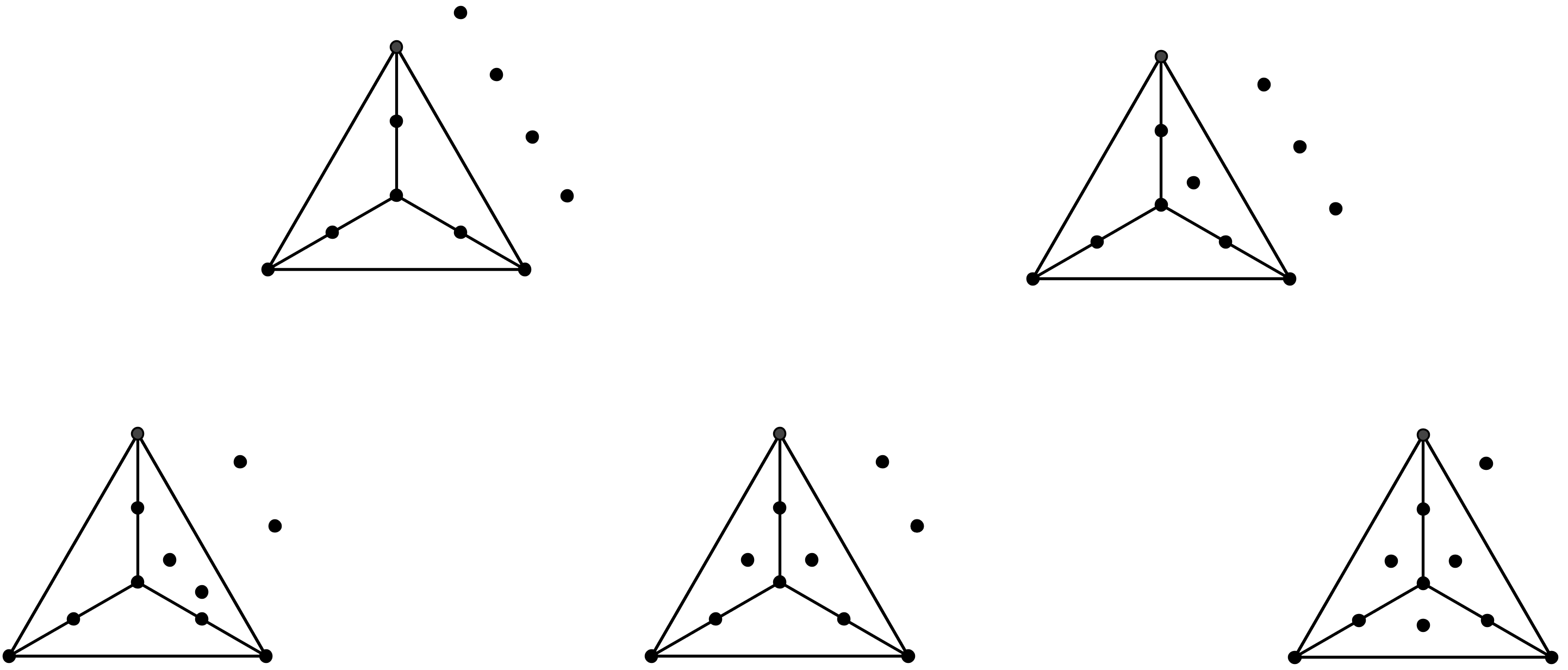}
\caption{\Deias has five \demb.}
\label{fig:D8a}
\end{figure}

\begin{figure}[H]
\centering
\includegraphics[width=0.7\linewidth, height=0.2\textheight]{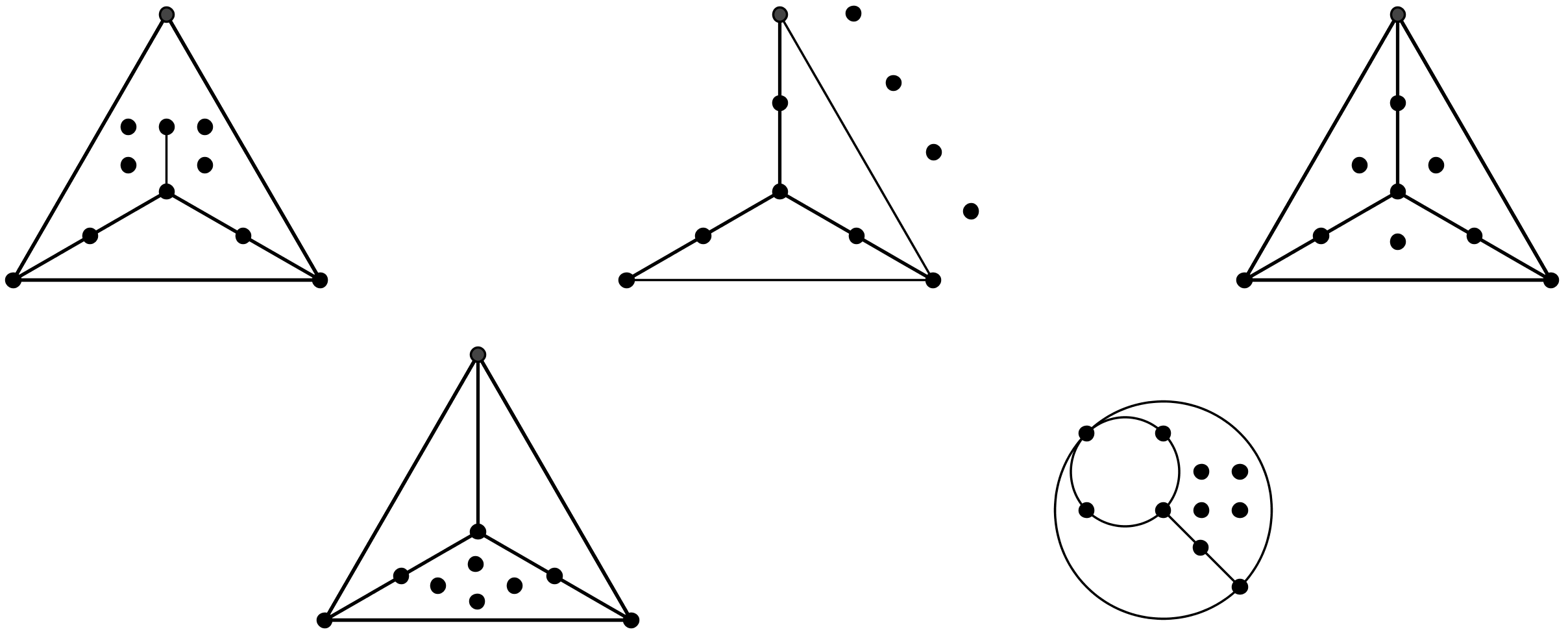}
\caption{\Deias has only two \equi; those that are incident to two vertices of degree 3, and those that are not.}
\label{fig:D8aminors}
\end{figure}

\begin{figure}[H]
\centering
\includegraphics[width=0.9\linewidth, height=0.25\textheight]{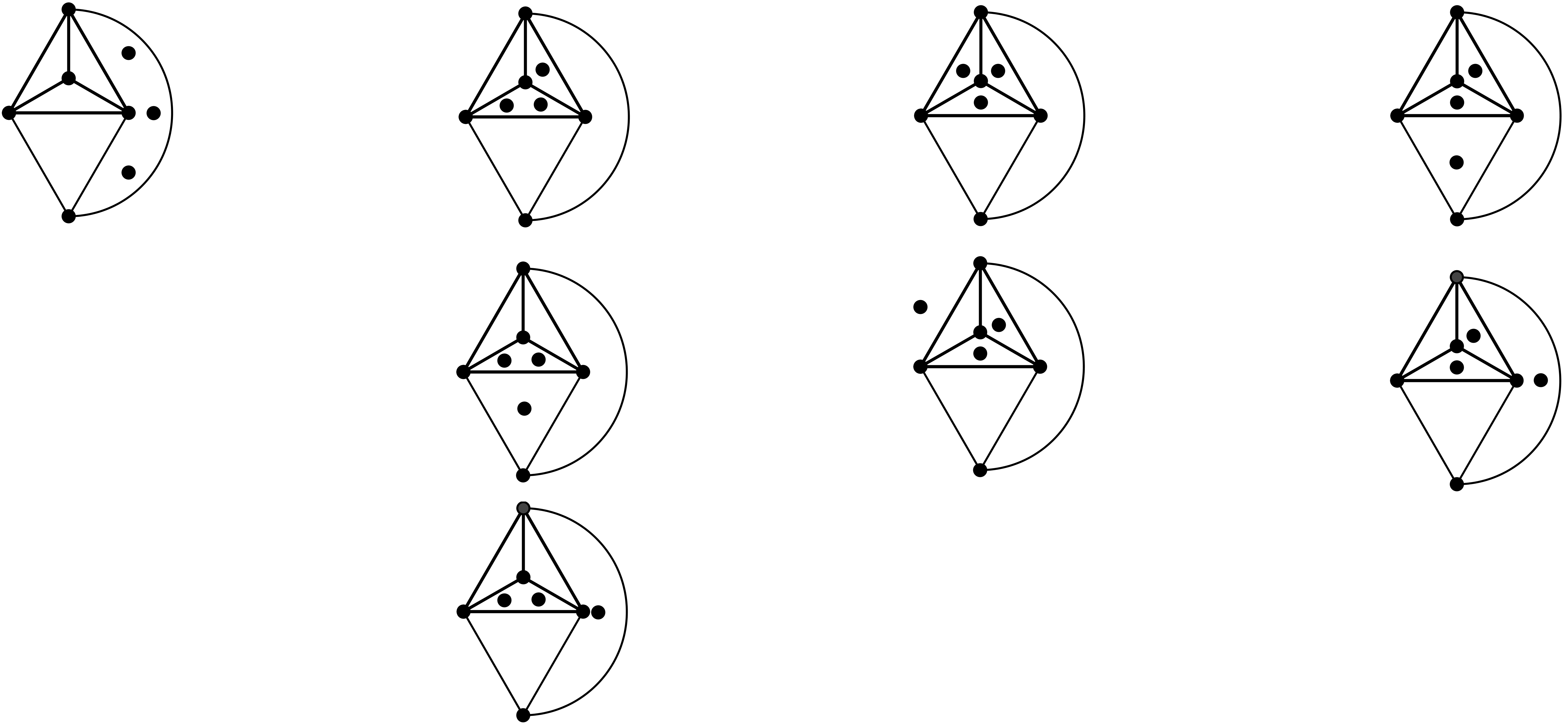}
\caption{\Dnias has eight \demb.}
\label{fig:D9a}
\end{figure}

\vskip -.225in

\begin{figure}[H]
\centering
\includegraphics[width=0.7\linewidth, height=0.2\textheight]{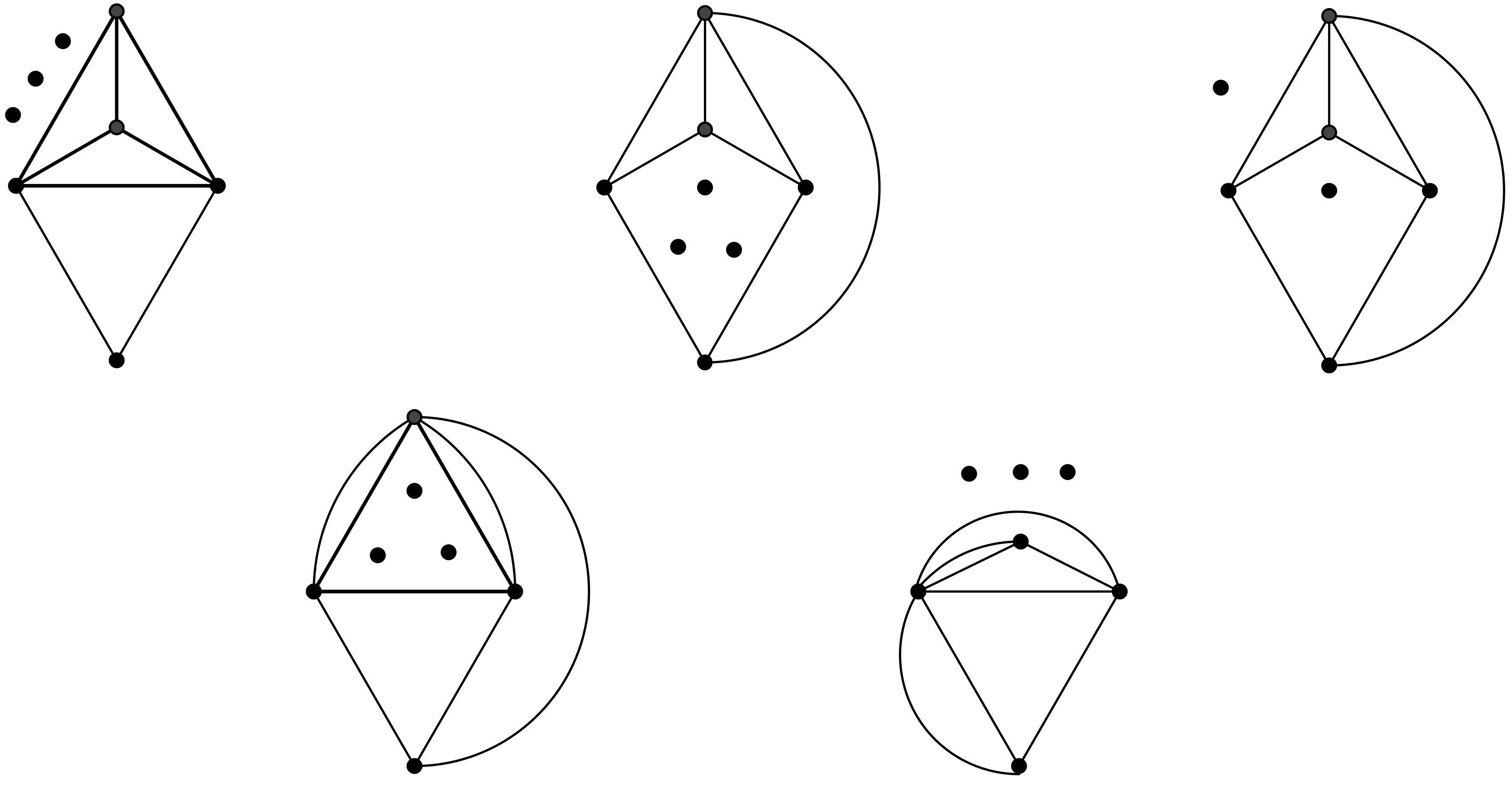}
\caption{\Dnias has only two \equi; those that are incident to two vertices of degree 4, and those that are not.}
\label{fig:D9aminors}
\end{figure}

\vskip -.225in
\begin{figure}[H]
\centering
\includegraphics[width=0.7\linewidth, height=0.08\textheight]{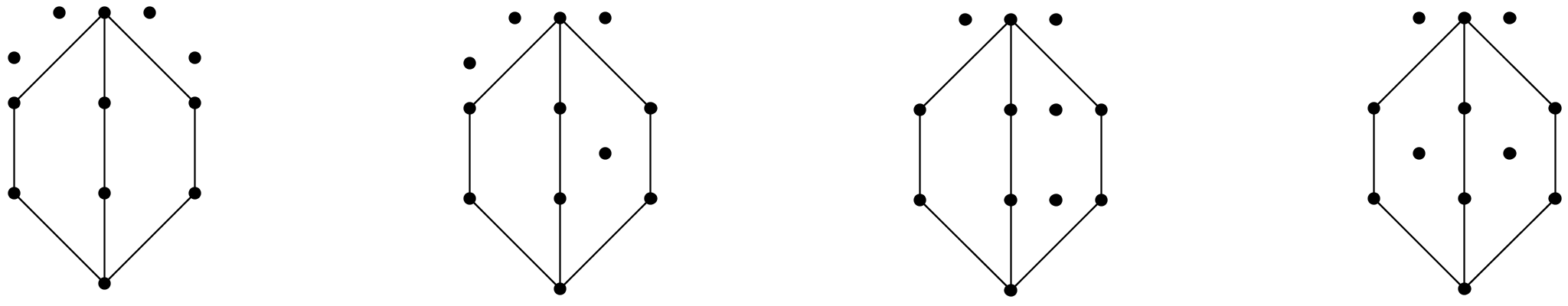}
\caption{\Dtenas has four \demb.}
\label{fig:D10a}
\end{figure}

\begin{figure}[H]
\centering
\includegraphics[width=0.4\linewidth, height=0.2\textheight]{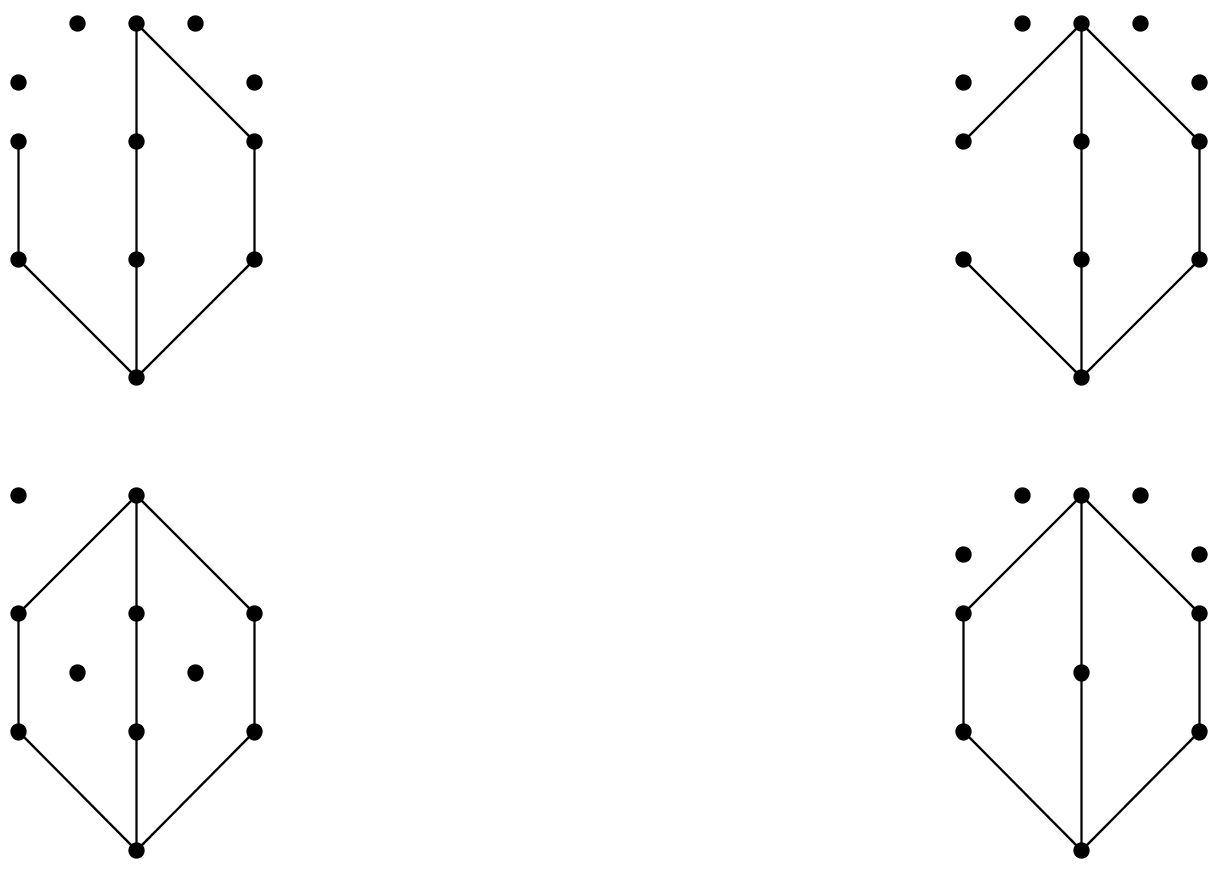}
\caption{\Dtenas has only two \equi; those that are incident to two vertices of degree 2, and those that are not.}
\label{fig:D10aminors}
\end{figure}

\begin{figure}[H]
\centering
\includegraphics[width=.65\linewidth, height=.2\textheight]{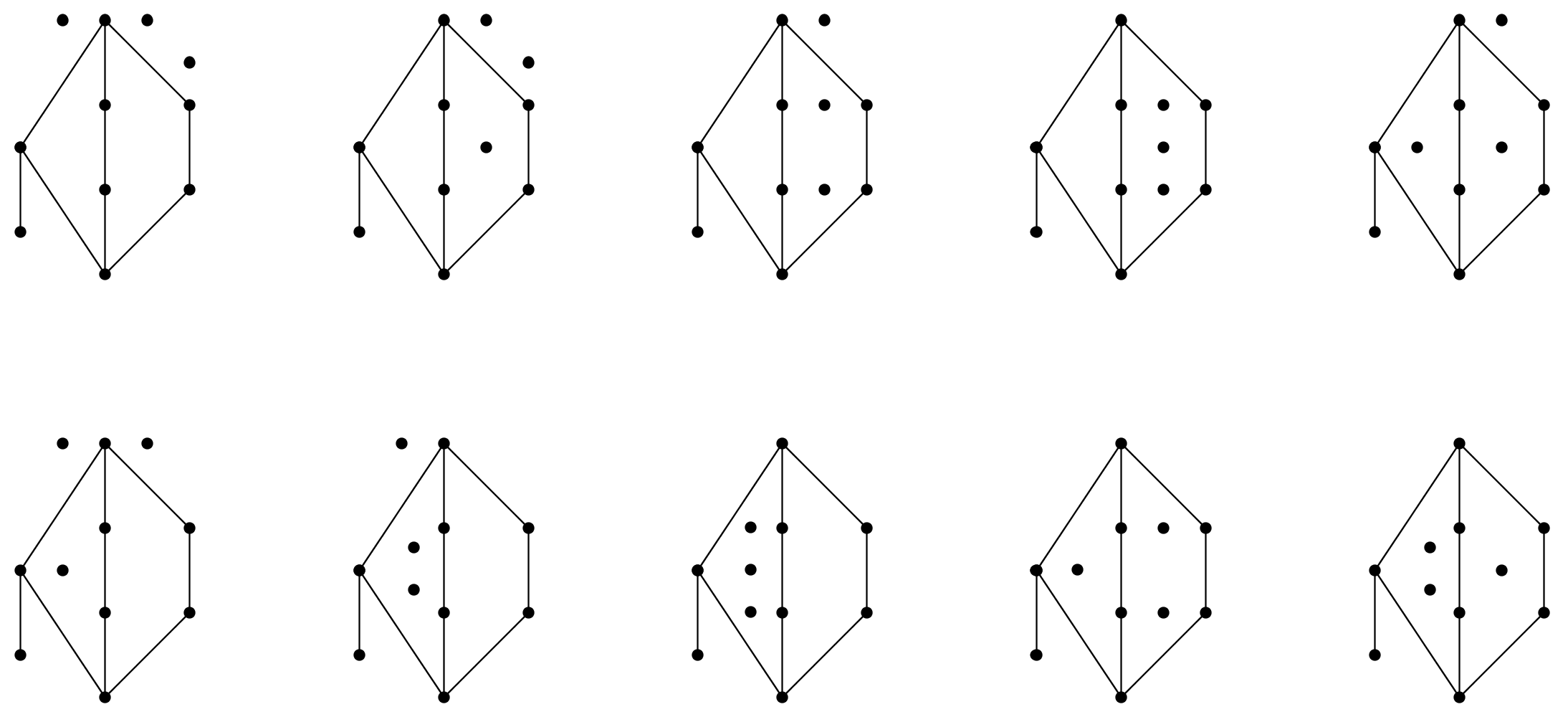}
\caption{\Deleas has ten \demb.}
\label{fig:D11a}
\end{figure}

\begin{figure}[H]
\centering
\includegraphics[width=0.9\linewidth, height=0.2\textheight]{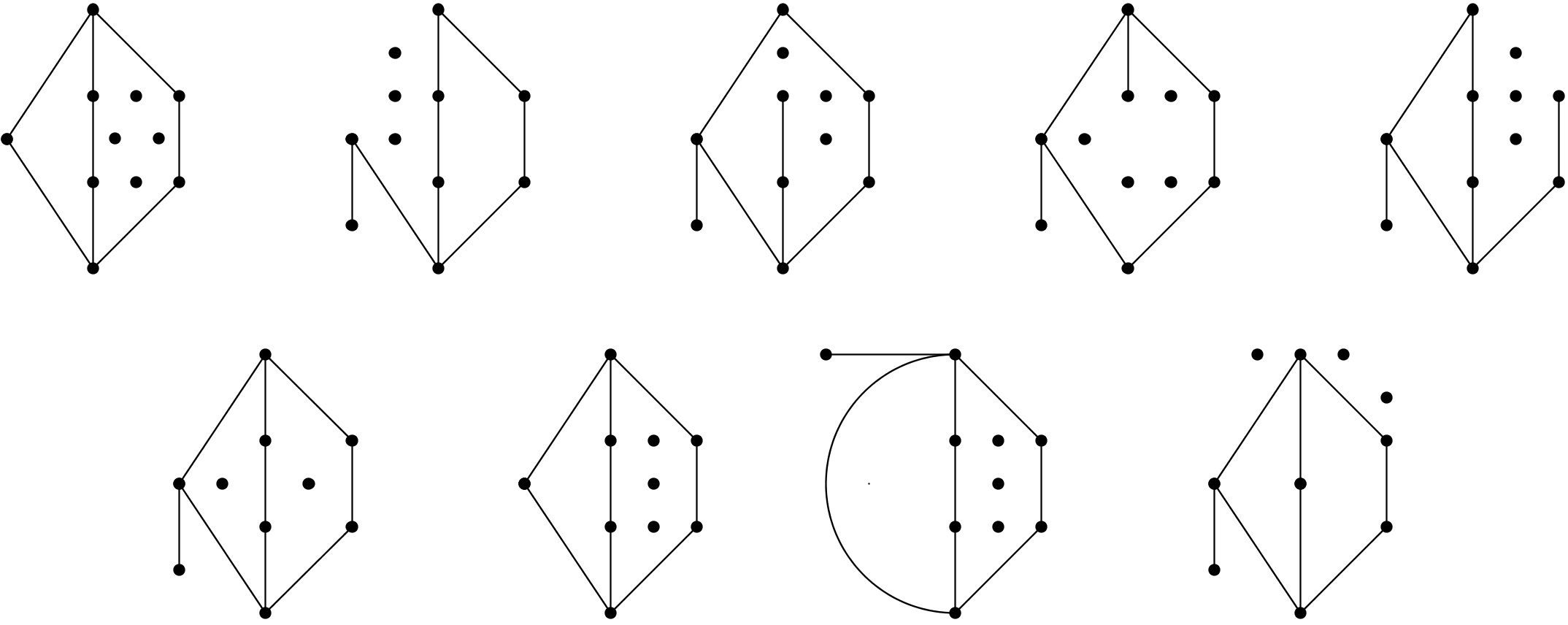}
\caption{\Deleas has four \equi; those that are incident to a vertex of degree 1, those that are incident to two vertices of degree 2, those that are incident to two vertices of degree 3, and those that are neither.}
\label{fig:D11aminors}
\end{figure}

\begin{figure}[H]
\centering
\includegraphics[width=0.6\linewidth, height=0.1\textheight]{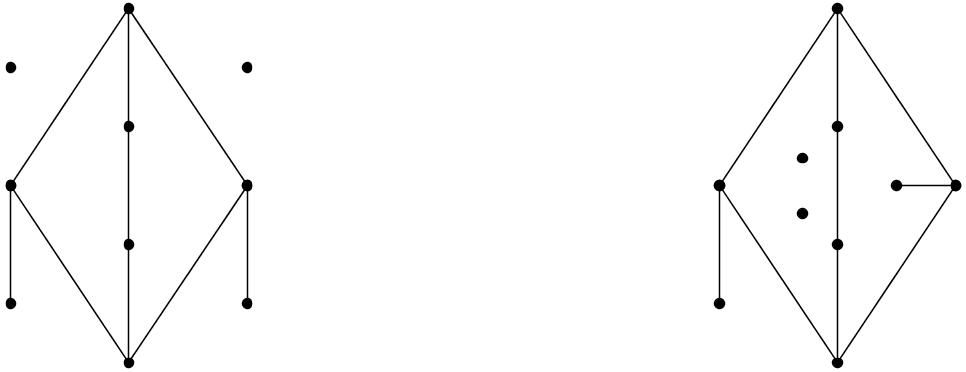}
\caption{\Dtwes has four \equi.}
\label{fig:D12}
\end{figure}

\begin{figure}[H]
\centering
\includegraphics[width=0.7\linewidth, height=0.18\textheight]{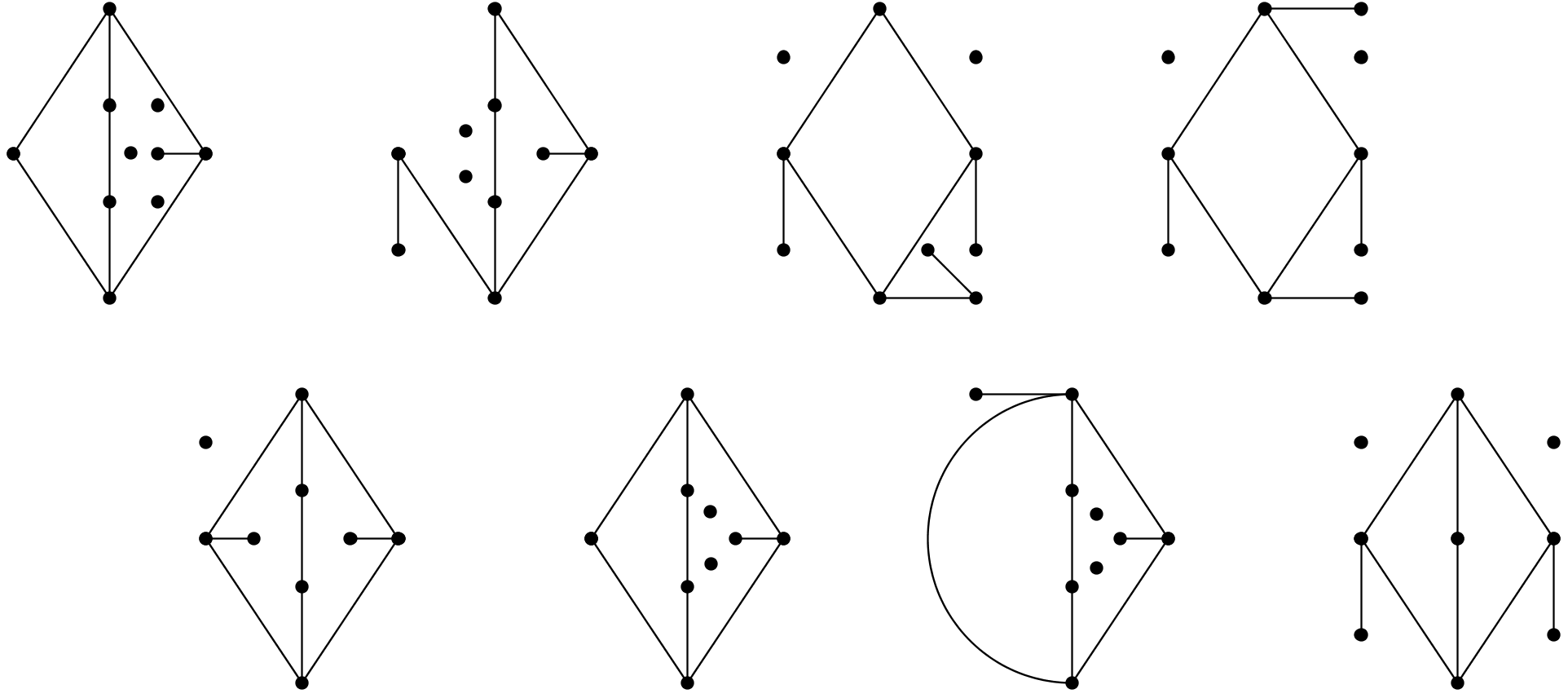}
\caption{\Dtwes has four \equi; those that are incident to a vertex of degree 1, those that are incident to two vertices of degree 2, those that are incident to two vertices of degree 3, and those that are neither.}
\label{fig:D12minors}
\end{figure}

\begin{figure}[H]
\centering
\vskip -.2in
\includegraphics[width=1\linewidth, height=0.2\textheight]{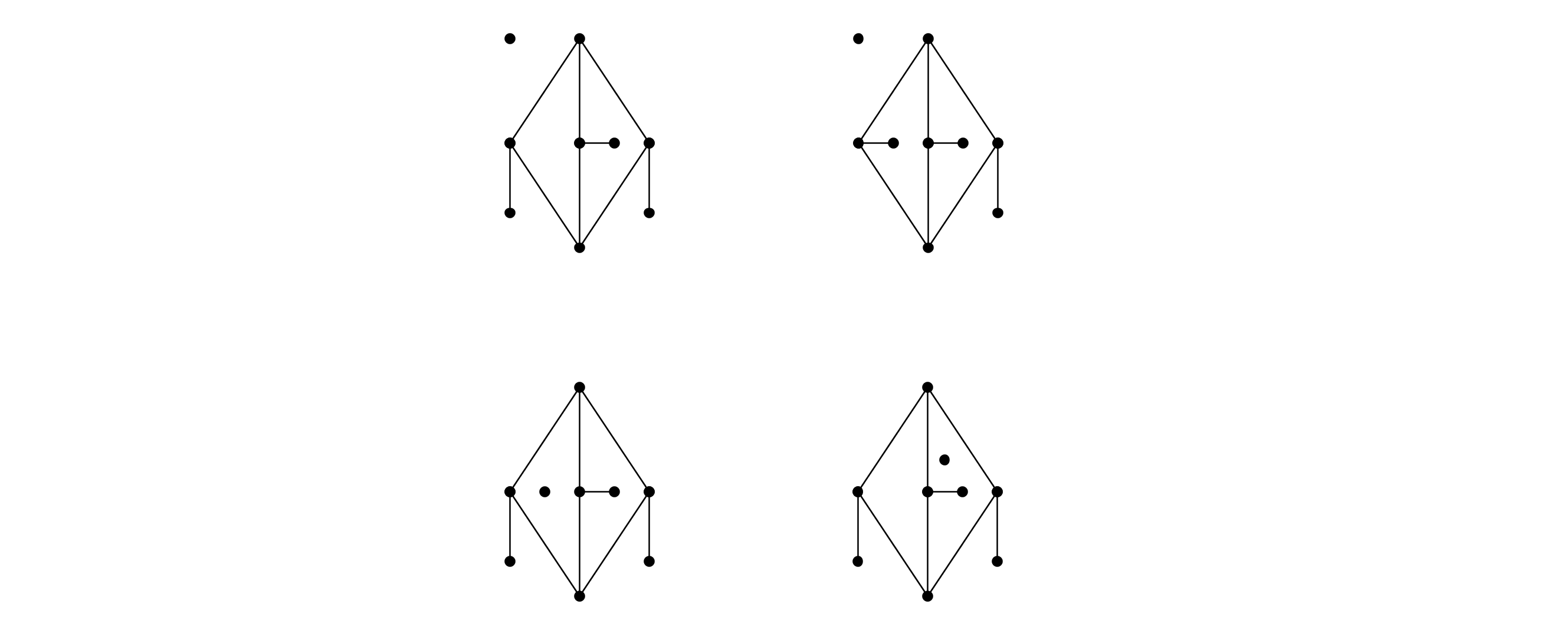}
\caption{\Dthis has four \demb.}
\label{fig:D13}
\end{figure}

\begin{figure}[H]
\centering
\includegraphics[width=0.7\linewidth, height=0.2\textheight]{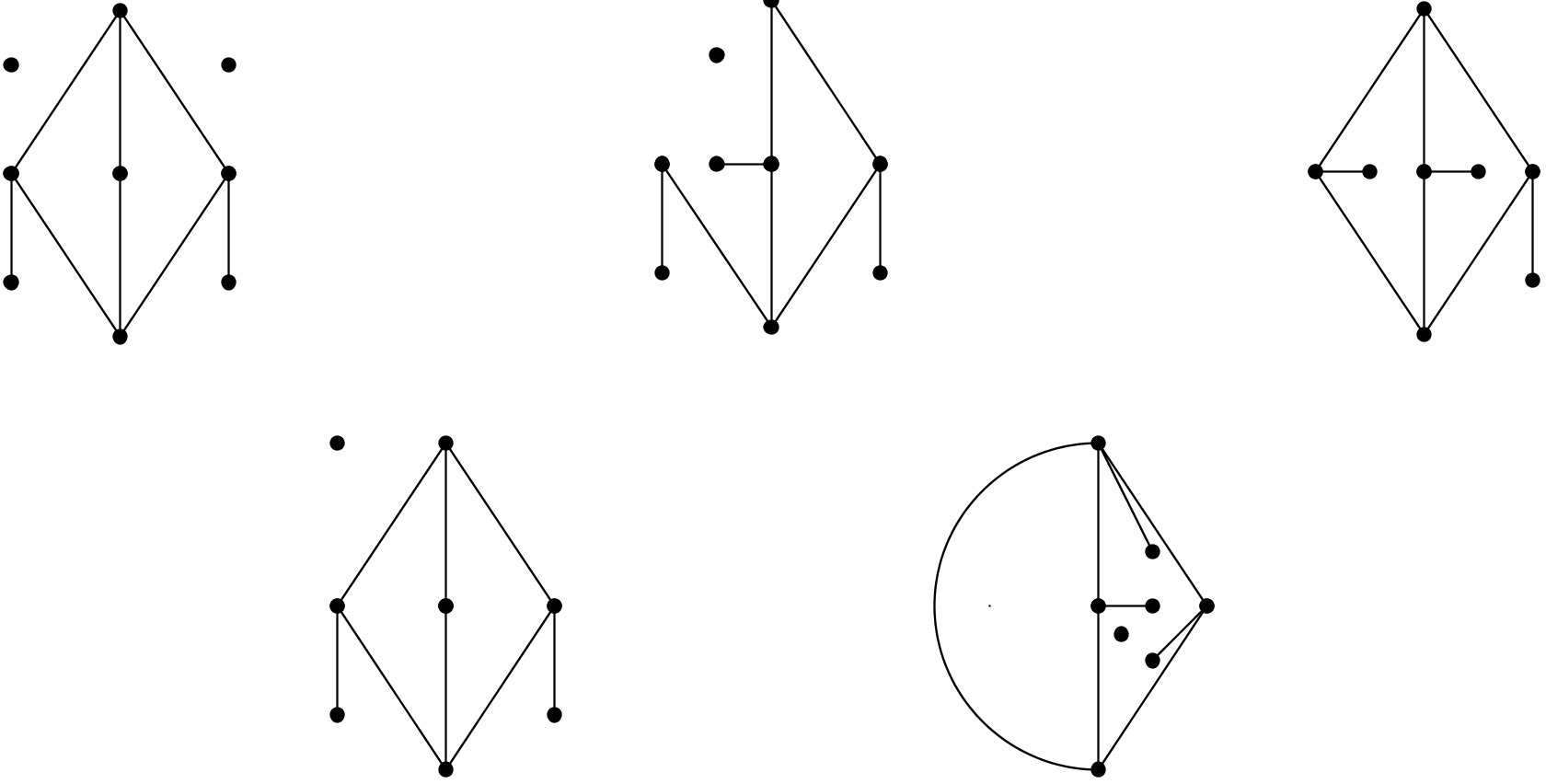}
\caption{\Dthis has only two \equi; those that are incident to a vertex of degree 1, and those that are not.}
\label{fig:D13minors}
\end{figure}


\begin{figure}[H]
\centering
\includegraphics[width=0.15\linewidth, height=0.1\textheight]{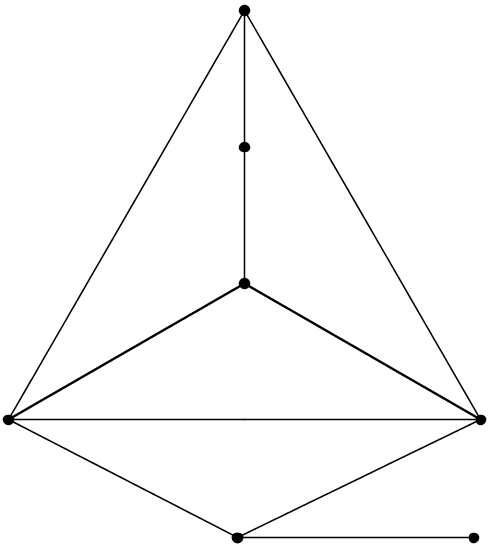}
\caption{\Dth$'$ \gosd \Dth.}
\label{fig:D3'}
\end{figure}

\begin{figure}[H]
\centering
\includegraphics[width=0.4\linewidth, height=0.1\textheight]{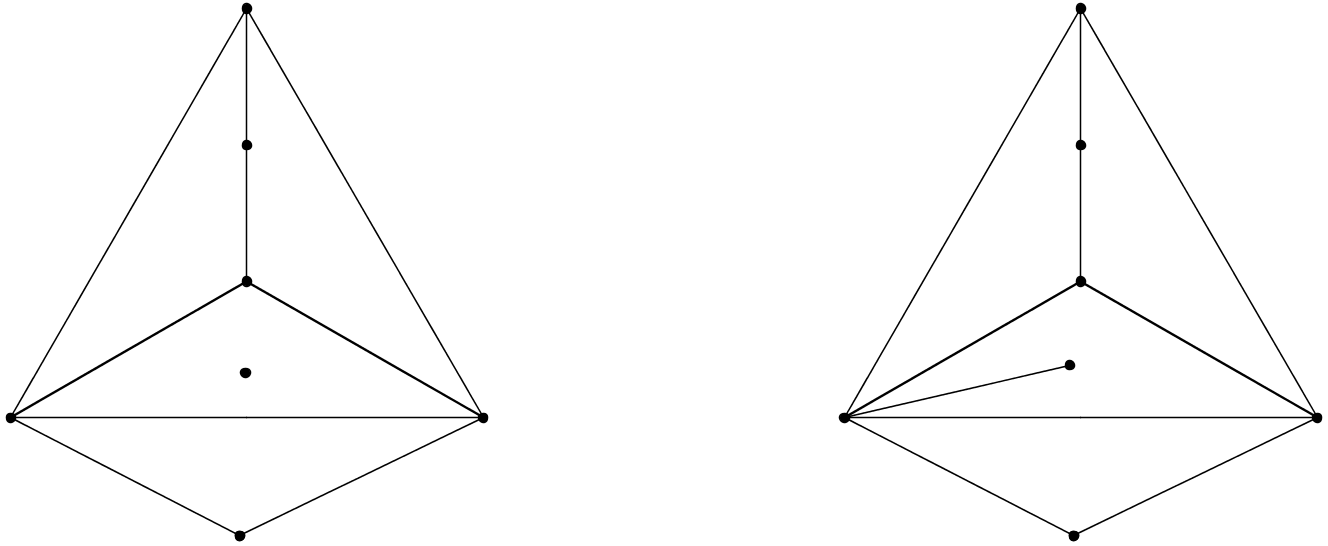}
\caption{\Dths \srsd.}
\label{fig:D3subdangle}
\end{figure}

\begin{figure}[H]
\centering
\includegraphics[width=0.2\linewidth, height=0.1\textheight]{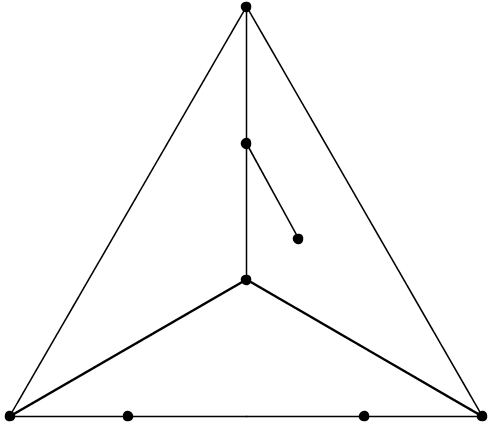}
\caption{\Dfo$'$ \gosd \Dfo.}
\label{fig:D4'}
\end{figure}

\begin{figure}[H]
\centering
\includegraphics[width=0.6\linewidth, height=0.1\textheight]{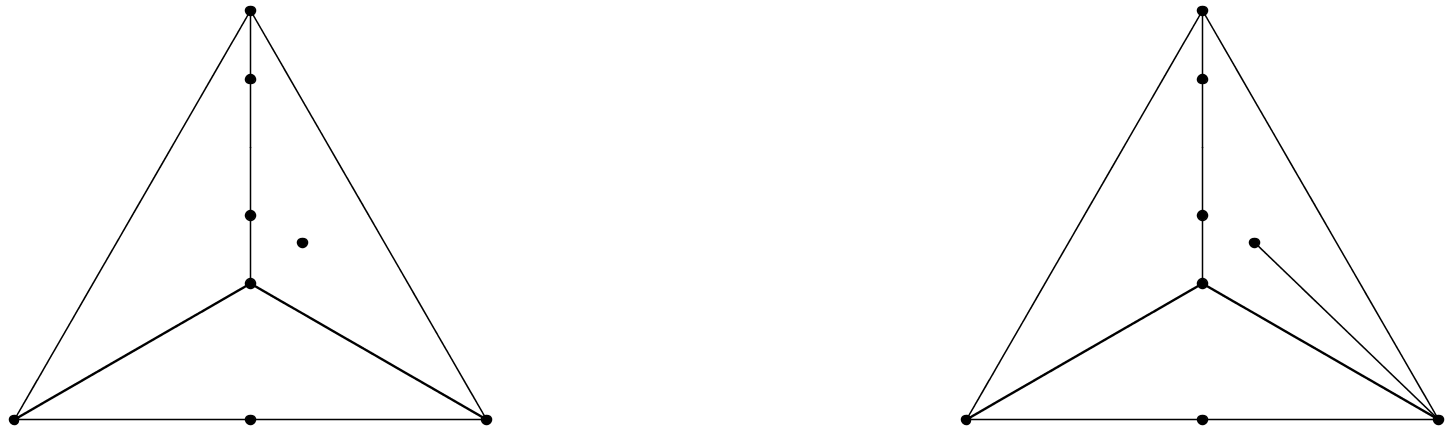}
\caption{\Dfos \srsd.}
\label{fig:D4subdangle}
\end{figure}

\begin{figure}[H]
\centering
\includegraphics[width=0.2\linewidth, height=0.1\textheight]{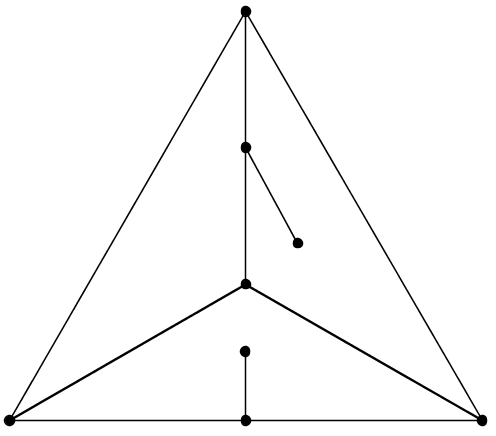}
\caption{\Dfo$''$ \gosd \Dfo$'$}
\label{fig:D4''}
\end{figure}

\begin{figure}[H]
\centering
\includegraphics[width=0.6\linewidth, height=0.1\textheight]{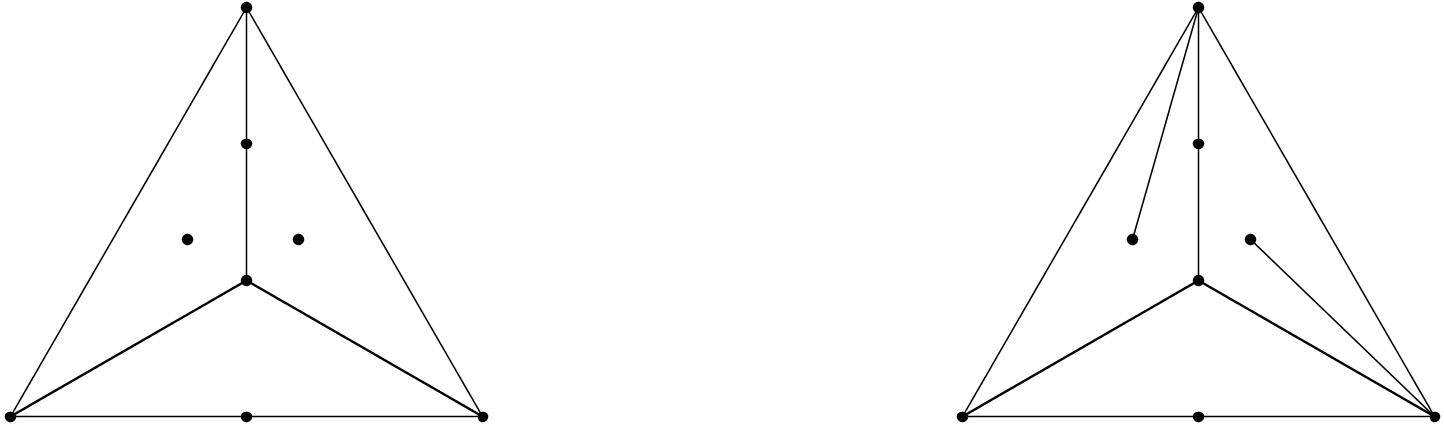}
\caption{\Dfo$'$ \srsd.}
\label{fig:D4'subdangle}
\end{figure}

\begin{figure}[H]
\centering
\includegraphics[width=0.3\linewidth, height=0.1\textheight]{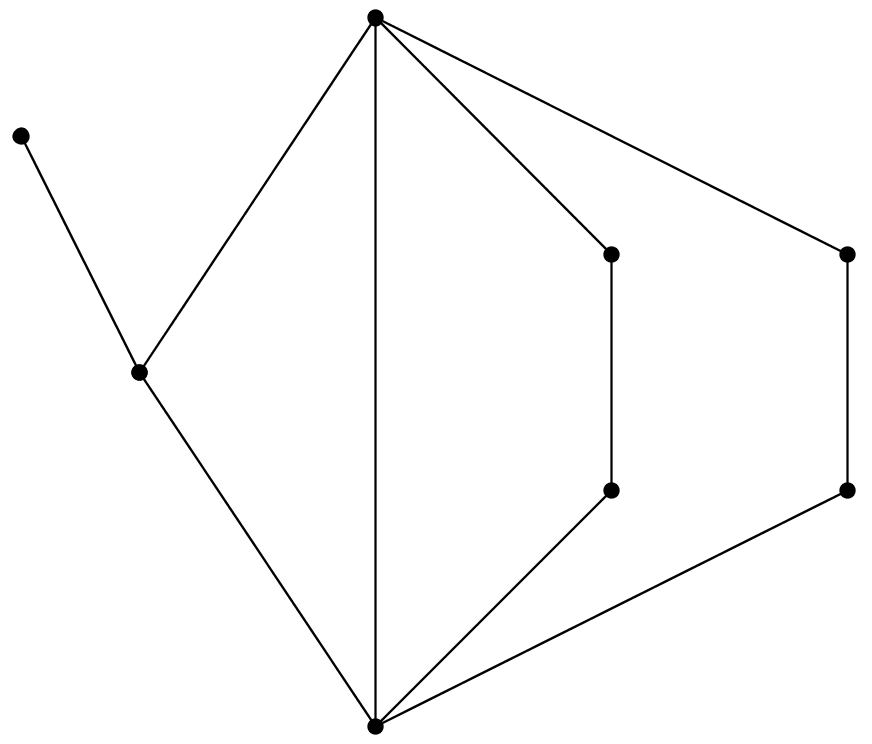}
\caption{\Dfi$'$ \gosd \Dfi.}
\label{fig:D5'}
\end{figure}

\begin{figure}[H]
\centering
\includegraphics[width=0.6\linewidth, height=0.1\textheight]{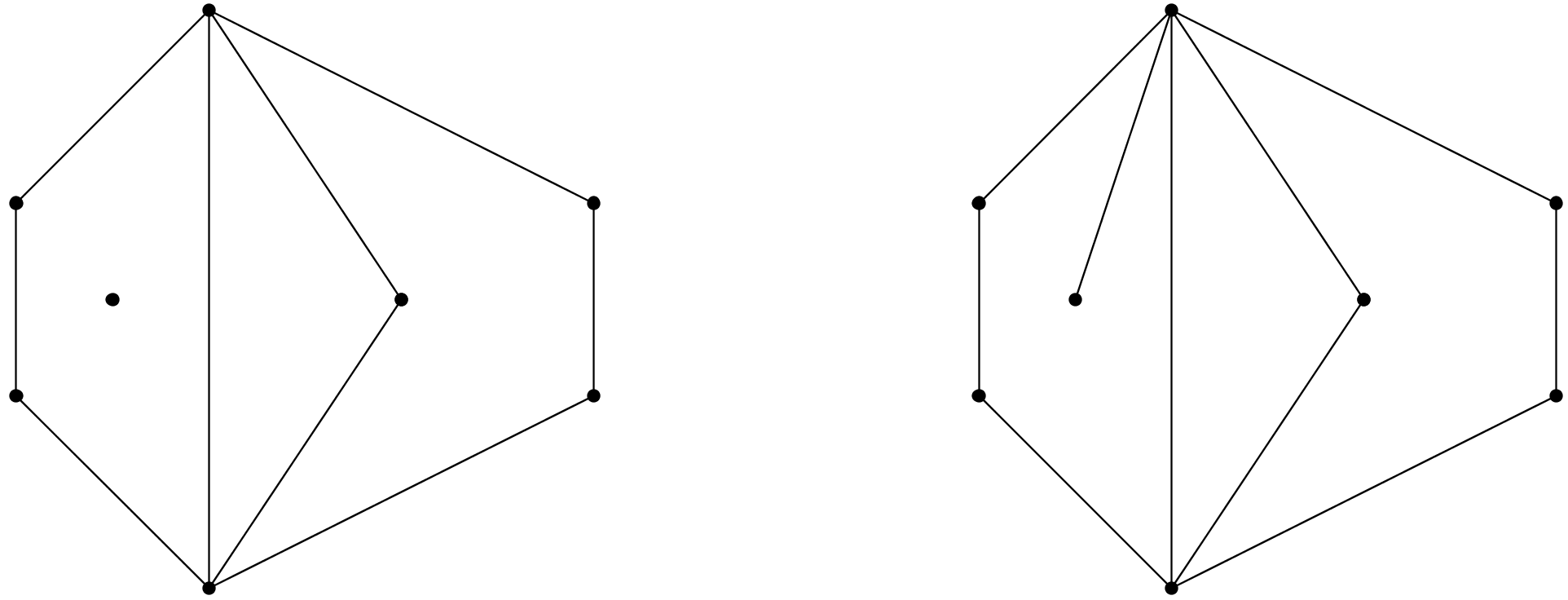}
\caption{\Dfis \srsd.}
\label{fig:D5subdangle}
\end{figure}

\begin{figure}[H]
\centering
\includegraphics[width=0.3\linewidth, height=0.1\textheight]{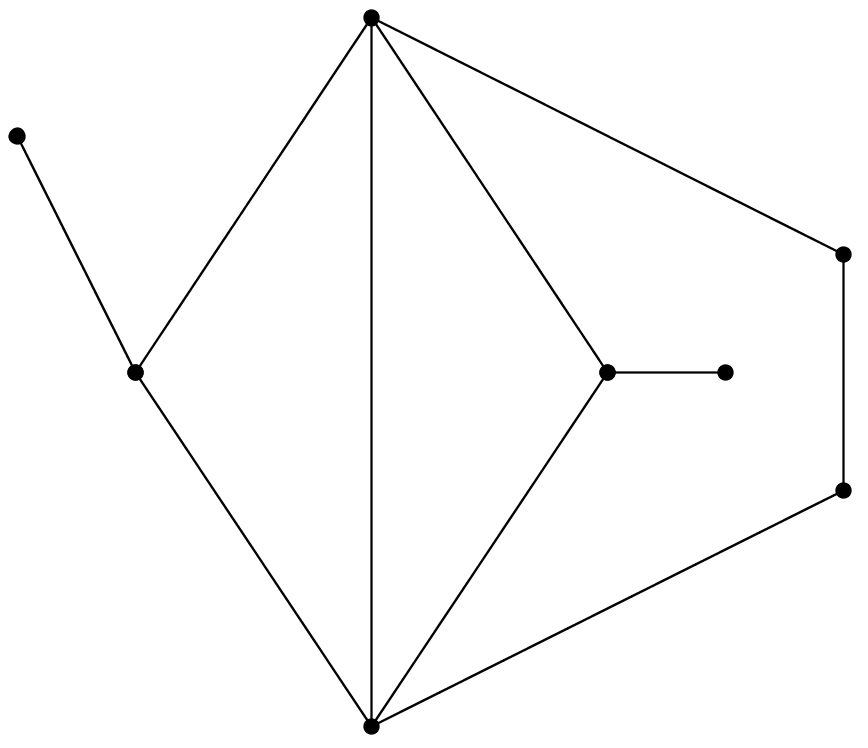}
\caption{\Dfi$''$ \gosd \Dfi$'$.}
\label{fig:D5''}
\end{figure}

\begin{figure}[H]
\centering
\includegraphics[width=0.6\linewidth, height=0.1\textheight]{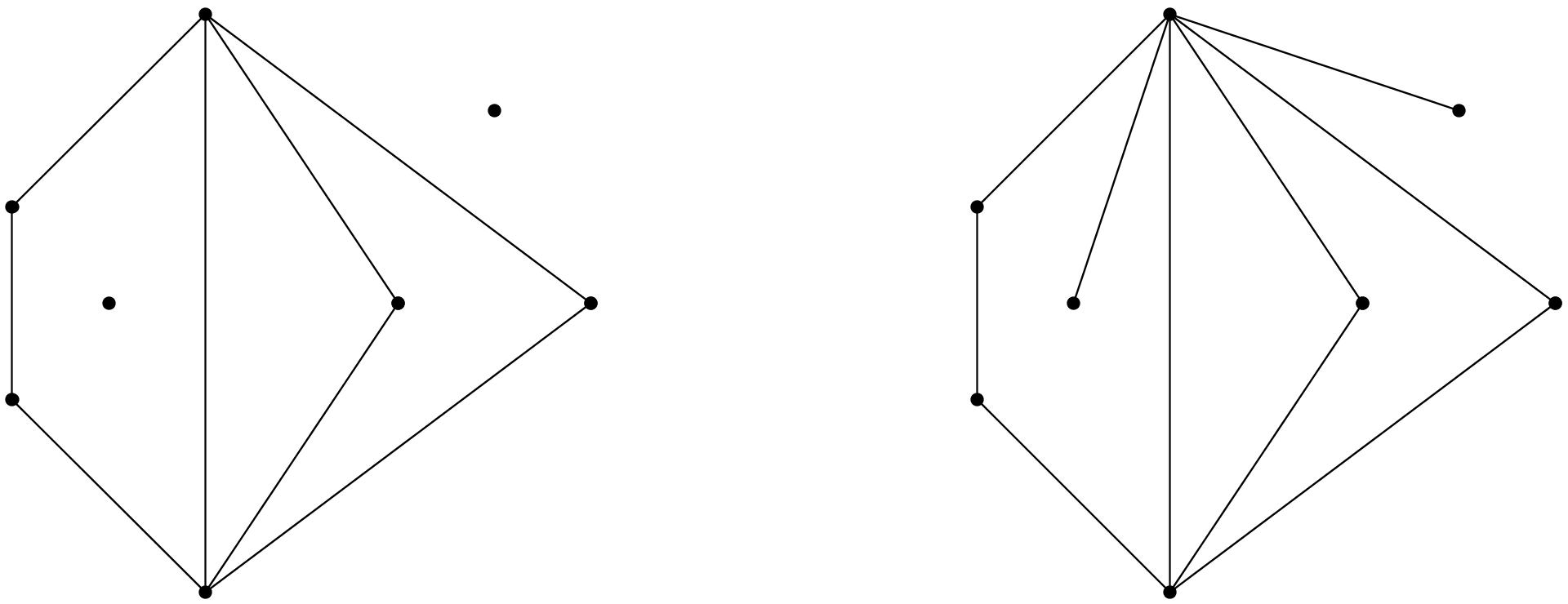}
\caption{\Dfi$'$ \srsd.}
\label{fig:D5'subdangle}
\end{figure}

\begin{figure}[H]
\centering
\includegraphics[width=0.3\linewidth, height=0.1\textheight]{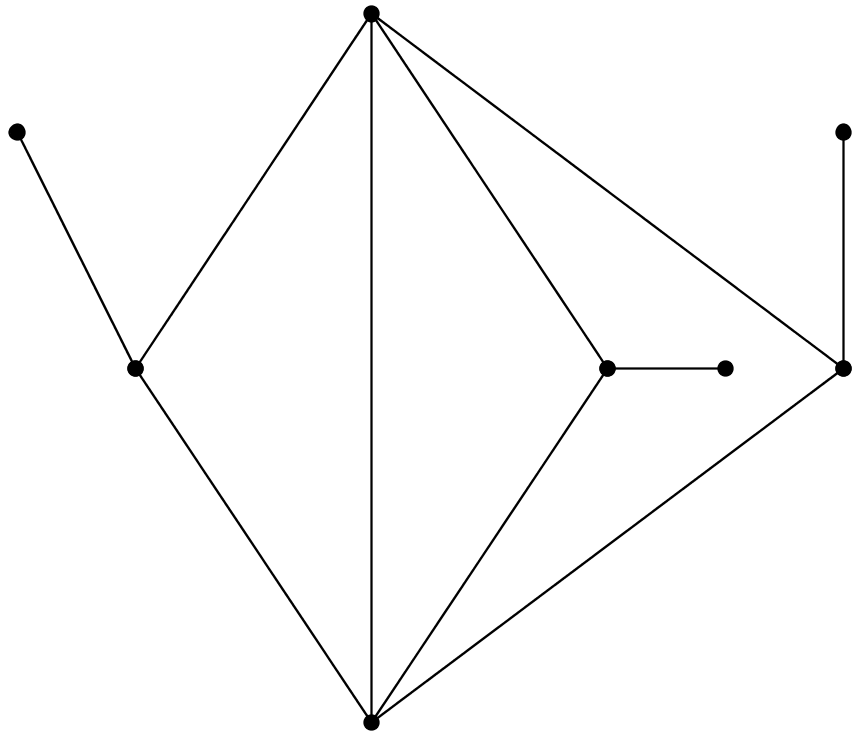}
\caption{\Dfi$'''$ \gosd \Dfi$''$.}
\label{fig:D5'''}
\end{figure}

\begin{figure}[H]
\centering
\includegraphics[width=0.6\linewidth, height=0.1\textheight]{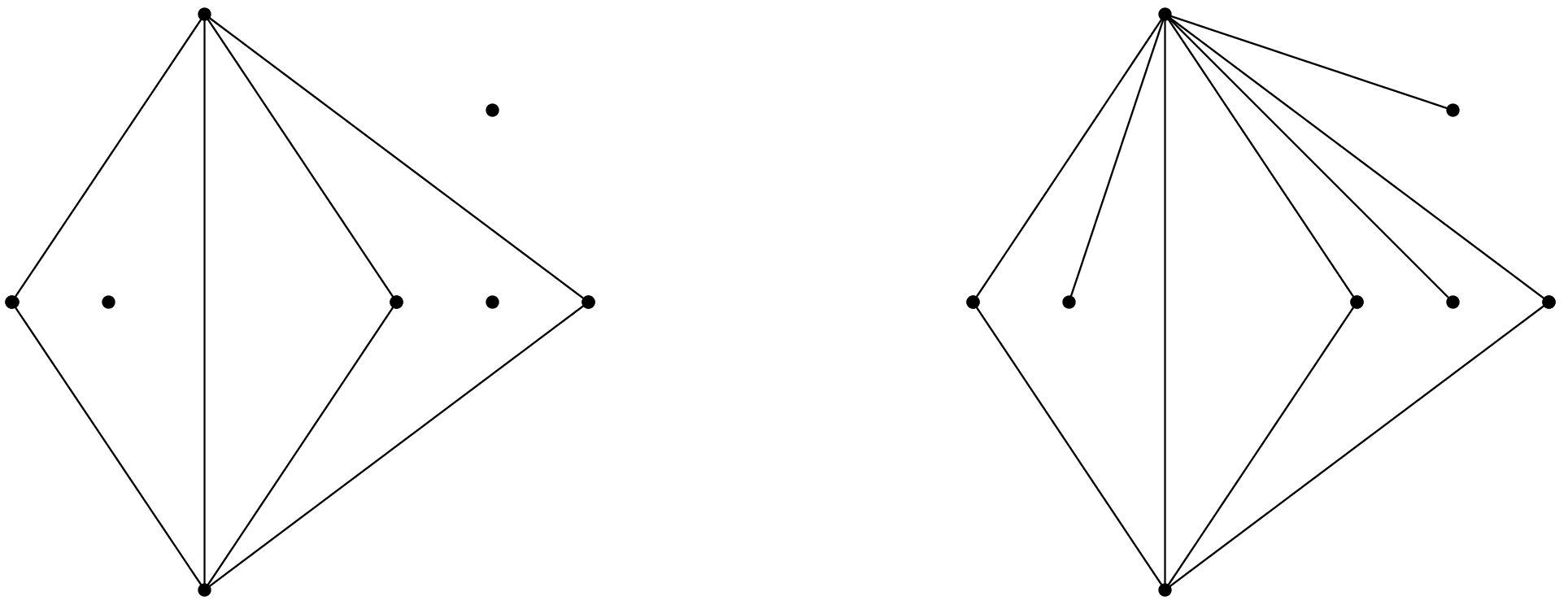}
\caption{\Dfi$''$ \srsd.}
\label{fig:D5''subdangle}
\end{figure}

\begin{figure}[H]
\centering
\includegraphics[width=0.25\linewidth, height=0.1\textheight]{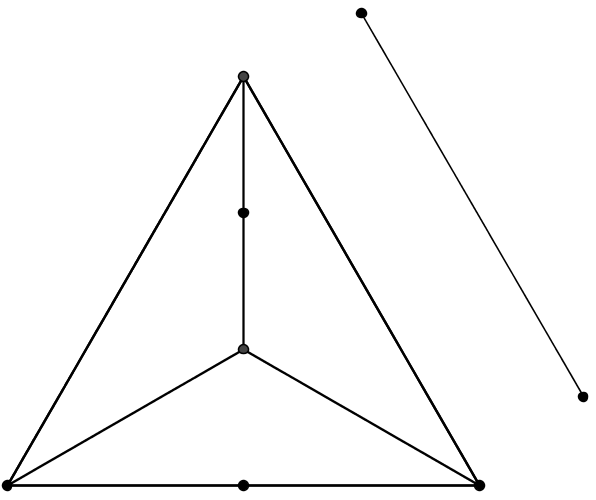}
\caption{\Dsebs \govb \Dsea.}
\label{fig:D7b}
\end{figure}

\begin{figure}[H]
\centering
\includegraphics[width=0.25\linewidth, height=0.1\textheight]{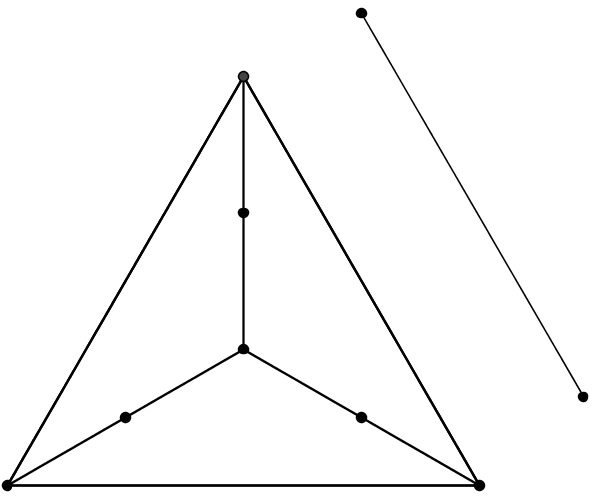}
\caption{\Deibs \govb \Deia.}
\label{fig:D8b}
\end{figure}

\begin{figure}[H]
\centering
\includegraphics[width=0.15\linewidth, height=0.1\textheight]{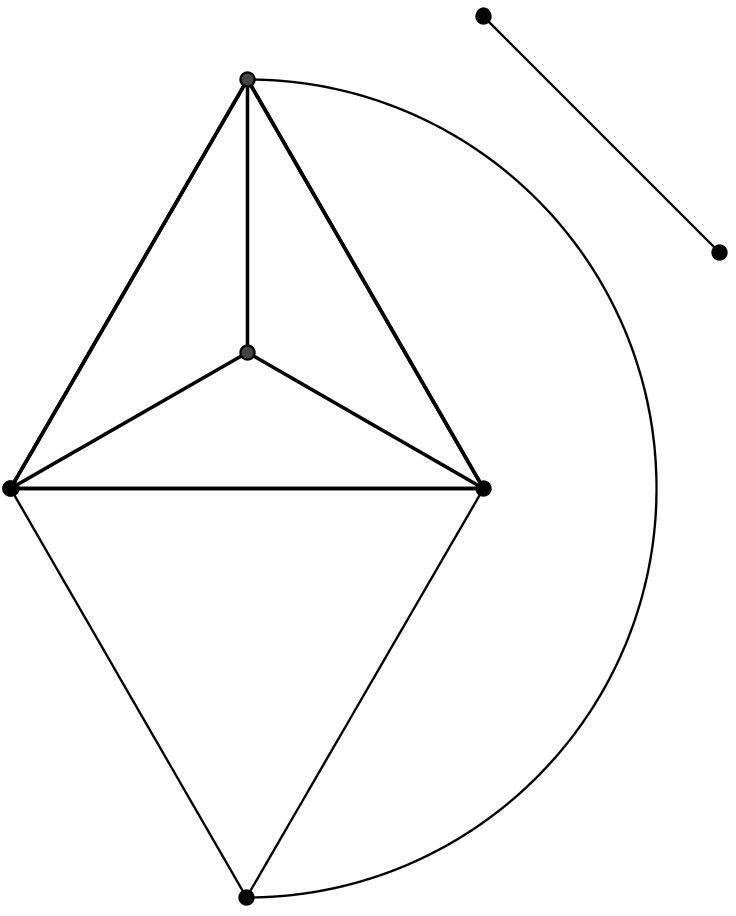}
\caption{\Dnibs \govb \Dnia.}
\label{fig:D9b}
\end{figure}

\begin{figure}[H]
\centering
\includegraphics[width=0.15\linewidth, height=0.1\textheight]{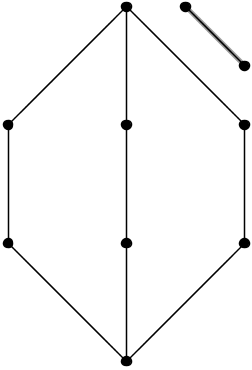}
\caption{\Dtenbs \govb \Dtena.}
\label{fig:D10b}
\end{figure}

\begin{figure}[H]
\centering
\includegraphics[width=0.15\linewidth, height=0.1\textheight]{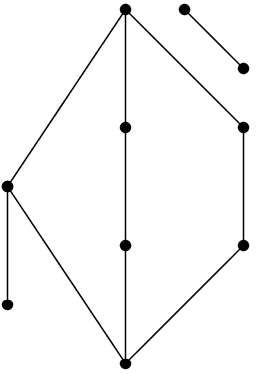}
\caption{\Delebs \govb \Delea.}
\label{fig:D11b}
\end{figure}

\section*{Acknowledgements} This research was conducted through the SUNY Potsdam/Clarkson University REU, with funding from the National Science Foundation under Grant No. DMA-1262737 and the National Security Administration under Grant No. H98230-14-1-0141.


\clearpage

\end{document}